\theoremstyle{plain}
\newtheorem{theorem}[subsection]{Theorem}
\newtheorem{proposition}[subsection]{Proposition}
\newtheorem{corollary}[subsection]{Corollary}	
\theoremstyle{remark}
\newtheorem{remark}[subsection]{Remark}
\newcommand{\noproof}{\hfill \qed}
\newcommand{\comp}{\raisebox{0.2mm}{\ensuremath{\scriptstyle{\circ}}}}
\newcommand{\defn}{\textbf}
\newcommand{\tensor}{\ensuremath{\otimes}}
\newcommand{\links}{\lgroup}
\newcommand{\rechts}{\rgroup}
\newcommand{\V}{\ensuremath{\mathcal{V}}}
\newcommand{\K}{\ensuremath{\mathbb{K}}}
\newcommand{\N}{\ensuremath{\mathbb{N}}}
\newcommand{\Q}{\ensuremath{\mathbb{Q}}}
\newcommand{\Z}{\ensuremath{\mathbb{Z}}}
\newcommand{\Liek}{\ensuremath{\mathsf{Lie}}_{\K}}
\newcommand{\Alg}{\ensuremath{\mathsf{Alg}}}
\newcommand{\qLie}{\ensuremath{\mathsf{qLie}}}
\newcommand{\qLiek}{\ensuremath{\mathsf{qLie}}_{\K}}
\newcommand{\nAlg}{\ensuremath{\text{$n$-$\mathsf{Alg}$}}}
\newcommand{\Vect}{\ensuremath{\mathsf{Vect}}}
\newcommand{\Fun}{\ensuremath{\mathsf{Fun}}}
\newcommand{\Mag}{\ensuremath{\mathsf{Mag}}}
\newcommand{\Set}{\ensuremath{\mathsf{Set}}}
\newcommand{\Top}{\ensuremath{\mathsf{Top}}}
\newcommand{\kar}{\ensuremath{\mathrm{char}}}
\newcommand{\LACC}{{\rm (LACC)}}
\begin{document}

\title[A characterisation of Lie algebras]{A characterisation of Lie algebras\\ via algebraic exponentiation}

\author{Xabier García-Martínez}
\address[Xabier García-Martínez]{Departamento de Matemáticas, Esc.\ Sup.\ de Enx.\ Informática, Campus de Ourense, Universidade de Vigo, E--32004, Ourense, Spain\newline
and\newline
Faculty of Engineering, Vrije Universiteit Brussel, Pleinlaan 2, B--1050 Brussel, Belgium}
\email{xabier.garcia.martinez@uvigo.gal}

\author{Tim Van~der Linden}
\address[Tim Van~der Linden]{Institut de
Recherche en Math\'ematique et Physique, Universit\'e catholique
de Louvain, che\-min du cyclotron~2 bte~L7.01.02, B--1348
Louvain-la-Neuve, Belgium}
\thanks{}
\email{tim.vanderlinden@uclouvain.be}

\thanks{This work was partially supported by Ministerio de Economía y Competitividad (Spain), grant number MTM2016-79661-P. The first author is a Postdoctoral Fellow of the Research Foundation--Flanders (FWO). The second author is a Research
Associate of the Fonds de la Recherche Scientifique--FNRS}

\begin{abstract}
In this article we describe varieties of Lie algebras via \emph{algebraic exponentiation}, a concept introduced by Gray in his Ph.D.\ thesis. For $\K$ an infinite field of characteristic different from $2$, we prove that the variety of Lie algebras over~$\K$ is the only variety of non-associative $\K$-algebras which is a non-abelian \emph{locally algebraically cartesian closed \LACC} category. More generally, a variety of $n$-algebras~$\V$ is a non-abelian \LACC\ category if and only if $n=2$ and $\V=\Liek$. In characteristic $2$ the situation is similar, but here we have to treat the identities $xx=0$ and $xy=-yx$ separately, since each of them gives rise to a variety of non-associative $\K$-algebras which is a non-abelian \LACC\ category.
\end{abstract}

\subjclass[2010]{08C05, 17A99, 18B99, 18A22, 18D15}
\keywords{Lie algebra; locally algebraically cartesian closed category; algebraic exponentiation}

\maketitle

\section{Introduction}
\emph{Which kind of a topology should a function space carry?} This is a historically important problem in topology, whose solution clearly depends on the properties the function space should have. A standard requirement is a correspondence between the continuous maps into a function space and those out of a cartesian product.

The classical answer of Fox~\cite{Fox} is \emph{the compact-open topology}: if $B$, $X$ and $Y$ are topological spaces, where $B$ is regular and locally compact, and $Y^B$ carries the compact-open topology, then there exists a natural bijection between the continuous maps $h\colon{B\times X\to Y}$ and those of the form $h^*\colon{X\to Y^B}$. Category theory expresses this by saying that the functor $B\times (-)\colon{\Top\to \Top}$ is left adjoint to the functor $(-)^B\colon{\Top\to \Top}$; the space $B$ is then called \emph{exponentiable}. 

In that same article it is also explained that while the conditions on $B$ can be weakened, it is impossible to entirely remove them. That is to say, non-exponentiable spaces exist, and the category $\Top$ of all topological spaces and continuous maps between them is not \emph{cartesian closed}: certain functors $B\times (-)$ do not admit a right adjoint. Later the work of Day and Kelly~\cite{Day-Kelly} gave a precise characterisation of those spaces which are exponentiable. This led to further developments where the problem was considered in slightly different contexts, as well as in situations which are quite far from topology. 

Indeed, cartesian closedness is a property with important consequences, which may be considered whenever there are cartesian products. As explained in~\cite{Eilenberg-Kelly, MacLane} its definition easily extends to more general settings involving any kind of a monoidal product. Many examples have been considered in the literature; for instance, the categories of sets, categories, or cocommutative coalgebras over a commutative ring are cartesian closed, while categories of vector spaces are closed with respect to the tensor product. 

Recently, in the work of Gray~\cite{GrayPhD, Gray2012} and Bourn--Gray~\cite{Bourn-Gray}, a different kind of closedness was considered, which is meant to be more appropriate to non-abelian algebraic contexts~\cite{Jo1990,ClHoJa}. Here the cartesian product functor $B\times (-)$ is replaced by a functor~$B\flat (-)$ which happens not to be induced by a monoidal product. However, both the examples and the general theory seem to indicate that in the setting of \emph{semi-abelian categories}~\cite{JaMaTh}, this is the right thing to consider~\cite{BJK,Bourn-Janelidze:Semidirect, acc, GaVa-LACC, BeBou, CiMaMe, dvdl1}. 

The aim of our present article is not to give an overview of this theory---for that we refer the reader to~\cite{GaVa-LACC} and the references there---but rather to prove one specific result: amongst all varieties of non-associative algebras over an infinite field, there exists precisely one variety that forms a non-abelian category whose objects are exponentiable with respect to the functors $B\flat (-)$, namely the category of Lie algebras. (As we shall see, there is actually one exception to this rule: when the characteristic of the field is $2$, there are two solutions, since we then have to distinguish between the identities $xx=0$ and $xy=-yx$.) Here, by a \defn{non-associative algebra} we mean a vector space equipped with a bilinear operation. 

In a variety of algebras $\V$ which is a semi-abelian category---for instance, $\V$ could be any variety of non-associative algebras---for any object $B$ the functor $B\flat (-)\colon{\V\to \V}$ takes an object $X$ and sends it to the kernel $B\flat X$ of the universally induced morphism $\links 1_B\;0\rechts\colon B+X\to B$. As explained in~\cite{BJK,Bourn-Janelidze:Semidirect}, functors of this kind play a key role in the description of \emph{internal actions} (which correspond to split extensions via a semi-direct product construction) as algebras for a monad. It also appears in a characterisation of the concept of a \defn{locally algebraically cartesian closed (LACC)} category~\cite{GrayPhD, Gray2012, Bourn-Gray}. The variety $\V$ is such, if and only if each functor $B\flat (-)\colon{\V\to \V}$, equivalently,
\begin{enumerate}
\item is a left adjoint;
\item preserves all colimits; or
\item preserves binary sums.
\end{enumerate}
This condition has some strong categorical-algebraic consequences, as explored in the above-mentioned papers and in~\cite{acc}.

In the article~\cite{GrayLie}, Gray proved that the variety $\Liek$ of Lie algebras over a field~$\K$ is a~\LACC\ category. (Actually, he proved it for Lie algebras over a commutative ring with unit.) In our article~\cite{GaVa-LACC} we managed to find a partial converse to this theorem: we showed that the condition \LACC\ characterises Lie algebras amongst $\K$-algebras with an alternating multiplication ($xx=0$). The result is a simple categorical description of the Jacobi identity. We did, however, fail to remove the requirement that the identity $xx=0$ must hold. At the same we failed to understand why a subvariety of the variety of Lie algebras cannot be a locally algebraically cartesian closed category, unless its algebras carry the zero product. (In other words: unless $xy=0$ is an identity, which is equivalent to saying that the category is abelian.) 

The purpose of our present paper is to clarify those two issues. We present a new and self-contained analysis of the condition \LACC\ for categories of non-associative algebras, with the concrete aim of proving that over an infinite field $\K$ of characteristic different from $2$, there is precisely one non-abelian example: the category $\Liek$ of Lie algebras over $\K$.

In Section~\ref{Section Preliminaries} we start by recalling some definitions and basic properties of varieties of non-associative algebras, with in particular an analysis of the objects $B\flat X$ in this context. This leads to Theorem~\ref{Theorem previous} which provides a summary of the main result of~\cite{GaVa-LACC}. Section~\ref{Section Commutative} is devoted to the commutative case. We actually prove that \emph{there is no commutative case}: if $\K$ is an infinite field of characteristic different from $2$ and~$\V$ is a variety of commutative $\K$-algebras that forms a \LACC\ category, then $\V$ is an abelian category (Theorem~\ref{Theorem Commutative}). As a consequence we obtain Corollary~\ref{Corollary degree 2}, which says that any variety of $\K$-algebras satisfying an identity of degree~$2$ is a subvariety of~$\Liek$ as soon as it is a \LACC\ category. In characteristic~$2$ we prove a version of Corollary~\ref{Corollary degree 2} for the variety $\qLiek$ of \defn{quasi-Lie algebras}, which by definition satisfies the Jacobi identity and the identity $xy=yx=-yx$ (Corollary~\ref{Corollary qLie}).

Section~\ref{Section General Case} contains this paper's first major result, Theorem~\ref{Theorem degree 3}. It says that, in characteristic zero, Corollary~\ref{Corollary degree 2} does in fact describe the general case, since \emph{a variety of non-associative algebras which doesn't satisfy any non-trivial identity of degree $2$ cannot be a \LACC\ category}. The proof of this result involves a system of~128 polynomial equations $(f_i=0)_{1\leq i \leq 128}$---see Appendix~\ref{Appendix}. In order to show that the system is inconsistent, we had to rely on a computer algebra system; we chose to use an open-source package called \textsc{Singular}~\cite{DGPS}. In Section~\ref{Section Prime Characteristic} we extend this to infinite fields of prime characteristic, adapting the approach in characteristic zero to a proof of Theorem~\ref{Theorem degree 3 prime}. 

In Section~\ref{Section Subvarieties} we combine these results with a proof that \emph{a variety of non-associative $\K$-algebras which is strictly smaller than $\Liek$ can only form a \LACC\ category when it forms an abelian category} in order to obtain Theorem~\ref{Theorem Lie}: \emph{a variety of non-associative $\K$-algebras which forms a non-abelian \LACC\ category is either the variety of Lie algebras over~$\K$ (when $\kar(\K)\neq 2$) or one of the varieties $\Liek$ and $\qLiek$ (when $\kar(\K)= 2$)}. This result is then further sharpened in Section~\ref{Section n-Ary} where we prove that a variety of $n$-algebras viewed as a non-abelian semi-abelian category can only be \LACC\ when $n=2$---so that it is~$\Liek$ or $\qLiek$. This is our main result, Theorem~\ref{Theorem n-Ary}. 

To conclude the article, in Section~\ref{Section Final Remarks} we make a number of final remarks on potential ways of extending our results to more general situations.

\section{Preliminaries}\label{Section Preliminaries}

\subsection{Non-associative algebras}
Let $\K$ be a field. A \defn{(non-associative) algebra $A$ over $\K$} is a $\K$-vector space equipped with a bilinear operation ${A\times A\to A}$. We write $\Alg_{\K}$ for the category of non-associative algebras over~$\K$, with linear maps between them that preserve the operation. A \defn{variety} of non-associative algebras is any equationally defined class of algebras, considered as a full subcategory~$\V$ of~$\Alg_{\K}$. 

The variety $\Liek$ of \defn{Lie algebras} over $\K$ is the variety of non-associative algebras with a multiplication that is \defn{alternating} ($xx = 0$) and such that the \defn{Jacobi identity}
\[
{x(yz)+z(xy)+ y(zx)=0}
\] 
holds. Next to the Jacobi identity, instead of being alternating, the objects in the variety $\qLiek$ of \defn{quasi-Lie algebras}~\cite{quasiLie} over $\K$ satisfy \defn{anticommutativity} ($xy =- yx$).
As long as the characteristic of the field $\K$ is different from $2$, these two varieties coincide. However, when $\kar(\K)=2$, the variety $\Liek$ is strictly smaller than $\qLiek$.

If $\V$ satisfies commutativity $xy=yx$ instead of being alternating, it is the variety of so-called \defn{mock--Lie algebras}---see~\cite{MR3598575} for a survey on this type of algebras.

As is common for Lie algebras, we call an algebra \defn{abelian} when it has a zero product: $xy = 0$. These algebras form a variety which is isomorphic to the category of vector spaces over $\K$.

\subsection{The homogeneous components of a non-associative polynomial}
In order to state Theorem~\ref{Theorem Homogeneity} (which is Corollary~2 on page~8 of~\cite{Shestakov}) we need to fix some terminology. For a given set $S$, a \defn{(non-associative) polynomial} with variables in $S$ is an element of the free $\K$-algebra on $S$. Recall that the left adjoint ${\Set\to \Alg_{\K}}$ factors as a composite of the \emph{free magma} functor $M\colon{\Set\to \Mag}$ with the \emph{magma algebra} functor $\K[-]\colon {\Mag\to \Alg_{\K}}$. The elements of $M(S)$ are non-associative words in the alphabet $S$, and the elements of $\K[M(S)]$, the polynomials, are $\K$-linear combinations of such words. A \defn{monomial} in $\K[M(S)]$ is any scalar multiple of an element of~$M(S)$. The \defn{type} of a monomial $\varphi(x_{1},\dots,x_{n})$ is the element $(k_{1},\dots,k_{n})\in \N^{n}$ where $k_{i}$ is the number of times that $x_{i}$ appears in~$\varphi(x_{1},\dots,x_{n})$, and the \defn{degree} of a monomial is the sum of all $k_i$.

A polynomial is \defn{homogeneous} if its monomials are all of the same type; then its \defn{degree} is the degree of its monomials. Any polynomial may thus be written as a sum of homogeneous polynomials, which are called its \defn{homogeneous components}. 
A homogeneous polynomial $\phi(x_1,\dots,x_n)$ is called \defn{multilinear} when its monomials $\psi(x_{i_1},\dots,x_{i_m})$ have type $(1,\dots,1)\in \N^m$. A \defn{multilinear identity} is an identity of the form $\phi(x_{1},\dots,x_{n})=0$, where $\phi(x_{1},\dots,x_{n})$ is a multilinear polynomial.
In varieties of Lie algebras we know that:

\begin{theorem}[Theorem~3 in Section~4.2 of~\cite{Bahturin}]\label{Theorem Linearity General}
In a subvariety of $\Liek$, any nontrivial identity has a nontrivial multilinear consequence.\noproof
\end{theorem}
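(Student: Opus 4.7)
The plan is to prove this via the classical \emph{linearization} (or \emph{polarization}) procedure for polynomial identities, treating the two standard steps separately: first reduce to a homogeneous identity, and then extract a multilinear consequence from it.

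First I would reduce to the homogeneous case. Given a nontrivial identity $f(x_{1},\dots,x_{n})=0$, I would decompose $f$ into its homogeneous components $f_{(k_{1},\dots,k_{n})}$ and argue that each component is separately an identity in the variety. Over an infinite field, this is a standard Vandermonde argument: substituting $x_{i}\mapsto \lambda_{i}x_{i}$ for scalars $\lambda_{i}\in\K$ and viewing the result as a polynomial in the $\lambda_{i}$'s produces a system whose coefficients can be solved for. Since $f\neq 0$, at least one homogeneous component is nontrivial; replace $f$ by this component.

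Next, I would apply polarization. For each variable $x_{i}$ of degree $k_{i}\geq 2$, introduce new variables $y_{i,1},\dots,y_{i,k_{i}}$ and substitute $x_{i}\mapsto y_{i,1}+\cdots+y_{i,k_{i}}$. Again using the Vandermonde argument on the new variables, the multilinear component $\tilde f$ (of multi-degree $(1,\dots,1)$ in the $y_{i,j}$'s) is itself an identity in the variety. This $\tilde f$ is my candidate multilinear consequence. The nontriviality in characteristic zero is immediate: collapsing the $y_{i,j}$ back by the substitution $y_{i,j}\mapsto x_{i}$ recovers $k_{1}!\cdots k_{n}!\cdot f$, which is nonzero.

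The main obstacle is the positive characteristic case, where some $k_{i}!$ may vanish in $\K$ and the collapsing argument breaks down. Here I would proceed by a more delicate induction on the multi-degree, using \emph{partial} linearization: substitute just $x_{i}\mapsto x_{i}+y$ for a single new variable $y$, expand by homogeneity in $y$, and pick out an intermediate bi-homogeneous component of strictly smaller degree in $x_{i}$. One then needs to verify that at least one such component remains nontrivial. This is the heart of Bahturin's argument and relies on properties specific to subvarieties of $\Liek$ (notably, the structure of free Lie algebras, where anticommutativity and the Jacobi identity do not cause unexpected cancellations between distinct partial linearizations). Iterating until every variable has degree one produces the desired nontrivial multilinear identity.
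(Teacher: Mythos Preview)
The paper does not prove this theorem at all: it is stated with the \verb|\noproof| marker and attributed to Bahturin~\cite{Bahturin}. So there is nothing to compare your argument against on the paper's side; the authors simply import the result.

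Your outline is the standard one, and the characteristic-zero part is complete and correct. The positive-characteristic part, however, is only a pointer to where the real work lies. You correctly identify that one must use \emph{partial} linearization (replacing a single $x_i$ by $x_i+y$ and extracting the component of degree~$1$ in~$y$) and that the issue is showing this operation preserves nontriviality in the free Lie algebra. But you do not actually prove that fact; you only assert that ``anticommutativity and the Jacobi identity do not cause unexpected cancellations.'' That is exactly the statement requiring proof, and it is not automatic---in other varieties the analogue can genuinely fail.

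The missing ingredient is concrete and short: embed the free Lie algebra $L$ into the free associative algebra $A$ (via the universal enveloping algebra, or equivalently by Witt's theorem that the Lie subalgebra of $A$ generated by the indeterminates is free). In $A$, the partial linearization operator $\partial_i$ acts on a monomial by replacing each occurrence of $x_i$ in turn by $y$ and summing. Distinct associative monomials in $f$ contribute disjoint sets of monomials to $\partial_i f$ (since replacing the unique $y$ back by $x_i$ recovers the original monomial), so $\partial_i f\neq 0$ in $A$, hence in $L$. With this lemma in hand, your inductive scheme goes through in all characteristics. As written, your proposal stops just short of this step, so it is a correct plan but not yet a proof.
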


From now on, throughout the article we shall assume that $\K$ is an infinite field, so that we can use the next result.

\begin{theorem}[\cite{Shestakov, Bahturin}]\label{Theorem Homogeneity}
If $\V$ is a variety of algebras over an infinite field, then all of its identities are of the form $\phi(x_{1},\dots,x_{n})=0$, where $\phi(x_{1},\dots,x_{n})$ is a polynomial, each of whose homogeneous components $\psi(x_{i_{1}},\dots, x_{i_{m}})$ again gives rise to an identity $\psi(x_{i_{1}},\dots, x_{i_{m}})=0$ in $\V$. \noproof
\end{theorem}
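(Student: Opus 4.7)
The plan is to prove this by a standard Vandermonde-style scalar substitution argument, exploiting the infiniteness of $\K$ variable by variable.

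First, let $\phi(x_{1},\dots,x_{n})=0$ be an identity in $\V$. Fix the variable $x_{1}$ and decompose $\phi$ according to the multidegree in $x_{1}$:
\[
\phi(x_{1},x_{2},\dots,x_{n})=\sum_{k=0}^{d}\phi_{k}(x_{1},x_{2},\dots,x_{n}),
\]
where $\phi_{k}$ is the sum of those monomials in which $x_{1}$ appears exactly $k$ times, and $d$ is the maximal such $k$. I would like to argue that each $\phi_{k}$ is itself an identity in $\V$.

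Since $\phi$ is an identity, for any scalar $\lambda\in\K$ and any $a_{1},\dots,a_{n}$ in any $A\in\V$, substituting $\lambda a_{1}$ for $x_{1}$ and using that $\phi_{k}$ is homogeneous of degree $k$ in its first slot gives the vector-valued polynomial identity
\[
\sum_{k=0}^{d}\lambda^{k}\,\phi_{k}(a_{1},a_{2},\dots,a_{n})=0 \qquad \text{in } A.
\]
Here is the main step: fixing $a_{1},\dots,a_{n}$, this is a polynomial in $\lambda$ of degree at most $d$ with coefficients in the vector space $A$ that vanishes for every $\lambda\in\K$. Because $\K$ is infinite I can pick $d+1$ pairwise distinct scalars $\lambda_{0},\dots,\lambda_{d}$; the resulting linear system for the coefficients $\phi_{k}(a_{1},\dots,a_{n})$ has Vandermonde matrix $(\lambda_{j}^{k})_{j,k}$, which is invertible over $\K$. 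Hence each $\phi_{k}(a_{1},\dots,a_{n})=0$, so $\phi_{k}=0$ is an identity in $\V$.

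To finish, I would iterate this separation on the variables $x_{2},\dots,x_{n}$: each $\phi_{k}$ decomposes further into its homogeneous components in $x_{2}$, and the same substitution $x_{2}\mapsto\lambda x_{2}$ together with the Vandermonde argument shows that every such component is again an identity. After $n$ rounds the original polynomial $\phi$ has been written as a sum of fully homogeneous polynomials, each of which gives an identity of $\V$, as required. The only delicate point is the Vandermonde/infinite-field step; everything else is bookkeeping on multidegrees.
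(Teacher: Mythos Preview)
Your argument is correct and is precisely the standard one. The paper itself does not give a formal proof of this theorem---it is stated with a \texttt{\textbackslash noproof} marker and attributed to \cite{Shestakov, Bahturin}---but in Section~\ref{Section n-Ary} the authors sketch exactly the same idea: decompose $f=f_{0}+\cdots+f_{k}$ by the degree of $x_{1}$, compare $f(\lambda x_{1},\dots,x_{r})$ with $\lambda^{j}f(x_{1},\dots,x_{r})$, and use the infiniteness of $\K$ to extract each $f_{j}$ by linearity. Your Vandermonde formulation makes explicit what the paper leaves implicit in the phrase ``if we do not run out of variables $\lambda$ to choose from,'' and your iteration over the remaining variables is the obvious way to pass from homogeneity in one variable to full homogeneity. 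There is nothing to add.
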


\subsection{The objects \texorpdfstring{$B\flat X$}{BbX}} 
Let $B$ and $X$ be free $\K$-algebras. The object $B\flat X$, being the kernel of the morphism $\links 1_{B}\;0\rechts\colon {B+X\to B}$, consists of those polynomials with variables in $B$ and in $X$ which can be written in a form where all of their monomials contain variables in $X$. For instance, given $b$, $b'\in B$ and $x\in X$, the expression $(b(xx))b'$ is allowed, but~$bb'$ is not.

If now $\V$ is a variety of $\K$-algebras, and $B$ and $X$ are free algebras in $\V$, then the situation changes only slightly, as follows. We may view the elements of $B\flat X$ as polynomials with variables in $B$ and in $X$ satisfying the condition mentioned above, modulo those identities which hold in $\V$ that are expressible in terms of such polynomials. Abusing terminology, we make no distinction between the polynomials and the equivalence class to which they belong. See \cite{GaVa-LACC} for further details on this issue. 

\subsection{The condition \texorpdfstring{\LACC}{(LACC)}}
As explained in~\cite{Gray2012}, the condition \LACC\ may be expressed by asking that the canonical comparison map
\[
\links B\flat \iota_{X}\; {B\flat \iota_{Y}\rechts \colon B\flat X + B\flat Y \to B\flat (X+Y)}
\]
is an isomorphism. We shall only ever need this when the algebras under consideration are free. In particular, from now on $B$ will denote the free $\V$-algebra on two generators $b$, $b'$, and $X$ and~$Y$ will denote the free $\V$-algebras on one generator $x$ and $y$, respectively. We write $b_1$ and $b_1'$, $b_2$ and $b_2'$, when we consider a second copy of $B$, so that $\links B\flat \iota_{X}\; {B\flat \iota_{Y}\rechts }$ sends $b_1$ and $b_2$ to $b$ and $b_1'$ and $b_2'$ to $b'$. 

Thus, the structure of the algebra $B\flat (X+Y)$ is relatively simple, since it is the subalgebra of the sum $B + X + Y$---the free $\V$-algebra on four generators $b$, $b'$, $x$,~$y$---generated by the monomials containing an~$x$ or a $y$. 

On the other hand, the algebra $B\flat X + B\flat Y$ is harder to understand. It is the $\V$-algebra generated by the words formed by elements that belong to $B\flat X$ and~$B\flat Y$, formally, an $x$ surrounded by a finite number of $b_1$ and $b'_1$ or a $y$ surrounded by a finite number of $b_2$ and $b'_2$. Then, for example, the word $(b_1x)y$ belongs to $B\flat X + B\flat Y$, but $b_i(xy)$ does not (unless $xy = 0$, so that each algebra in $\V$ has a zero product, which makes $\V$ an abelian category). 

Moreover, in par with the principle mentioned above, when dealing with identities in $B\flat X + B\flat Y$, we need to be careful about what we use as input in the polynomials that express the identities of $\V$. For instance, if $\V$ is the variety of associative algebras, then it is not true that $(xb_1)y = x(b_2y)$ in $B\flat X + B\flat Y$. However, the identity $\big(y(xb_1)\big)y = y\big((xb_1)y\big)$ does hold. Another example: if $(xb_1)y$ is zero in $B \flat X + B\flat Y$, then either the identity $pq = 0$ holds in $\V$, or $0\in \{xb_1,y\}$; however, both cases imply that $\V$ is an abelian category.

\subsection{\texorpdfstring{\LACC}{(LACC)} and the Jacobi identity}
From the paper \cite{GaVa-LACC} we need the following result, which we shall prove here for the sake of completeness.

\begin{theorem}\label{Theorem GaVa-LACC}\label{Theorem previous}
Let $\K$ be an infinite field and $\V$ a variety of non-associative algebras over $\K$. If $\V$ is a \LACC\ category, then there exist $\lambda_{1}$, \dots, $\lambda_{8}$, $\mu_{1}$, \dots, $\mu_{8}$ in $\K$ such that
\begin{align*}
z(xy)=
\lambda_{1}(zx)y&+\lambda_{2}(xz)y+
\lambda_{3}y(zx)+\lambda_{4}y(xz)\\
&+\lambda_{5}(zy)x+\lambda_{6}(yz)x+
\lambda_{7}x(zy)+\lambda_{8}x(yz)
\end{align*}
and 
\begin{align*}
(xy)z=
\mu_{1}(zx)y&+\mu_{2}(xz)y+
\mu_{3}y(zx)+\mu_{4}y(xz)\\
&+\mu_{5}(zy)x+\mu_{6}(yz)x+
\mu_{7}x(zy)+\mu_{8}x(yz)
\end{align*}
are identities in $\V$. If, furthermore, $xy+yx=0$ holds in $\V$, then the Jacobi identity is an identity in $\V$.
\end{theorem}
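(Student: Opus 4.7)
The plan is to apply the surjectivity half of \LACC. The elements $b(xy)$ and $(xy)b$ of $B\flat(X+Y)$ must lie in the image of the canonical map $\links B\flat\iota_{X}\; B\flat\iota_{Y}\rechts$. Restricted to the $(1,1,1)$-homogeneous component in $(b,x,y)$, the domain $B\flat X + B\flat Y$ is spanned by the eight products pairing an element of $B\flat X$ from $\{x, b_{1}x, xb_{1}\}$ with an element of $B\flat Y$ from $\{y, b_{2}y, yb_{2}\}$ in such a way that $b$ appears exactly once overall, and the map sends these to the eight monomials listed in the theorem (with both $b_{1}$ and $b_{2}$ going to $b$). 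Since $b$, $x$, $y$ are free generators, expressing $b(xy)$ and $(xy)b$ as $\K$-linear combinations of the images yields the two identities upon relabelling $b$ as $z$.

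For the second claim, assume $xy+yx=0$. Anticommutativity collapses each of the eight terms into $\pm y(zx)$ or $\pm x(yz)$, reducing the identity for $z(xy)$ to $z(xy) = A\, y(zx) + B\, x(yz)$ for some $A,B \in \K$. Transposing $x$ and $y$ (licit, since these are free variables) and reapplying anticommutativity also yields $z(xy) = A\, x(yz) + B\, y(zx)$, whence $(A-B)\bigl(y(zx)-x(yz)\bigr) = 0$ in $\V$. Either $A=B$, or $y(zx)=x(yz)$ is an identity of $\V$; in the second case cyclic permutation yields the \emph{collapse} $x(yz)=y(zx)=z(xy)$.

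The collapse case contradicts \LACC. Indeed, in the hypothetical collapse variety every trilinear-in-$(b,x,y)$ monomial of $B\flat(X+Y)$ reduces to $\pm b(xy)$, so the corresponding component has dimension at most one; yet $B\flat X + B\flat Y$ retains the two linearly independent elements $(b_{1}x)y$ and $x(b_{2}y)$, because the collapse identity cannot be used to rewrite them inside the coproduct: the atoms $b_{1}x$ and $b_{2}y$ cannot be ``split'', since $b_{1}$ and $b_{2}$ are themselves not elements of $B\flat X + B\flat Y$. Hence $A=B=\gamma$, and cycling $z(xy)=\gamma(y(zx)+x(yz))$ and summing the three cyclic variants yields $(1-2\gamma)J = 0$ with $J=x(yz)+y(zx)+z(xy)$. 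If $1-2\gamma\neq 0$, Jacobi follows. Otherwise $\gamma = 1/2$, forcing $\kar(\K)\neq 2$; when $\kar(\K)=3$ we have $\gamma=-1$ and the identity $z(xy)=-(y(zx)+x(yz))$ is itself Jacobi, while when $\kar(\K)\notin\{2,3\}$ subtracting cyclic variants of $2z(xy)=y(zx)+x(yz)$ yields $3(z(xy)-x(yz))=0$, returning us to the already excluded collapse case. The main obstacle is this dichotomy; handling it hinges on the dimension comparison between the trilinear components on both sides of the \LACC\ isomorphism in a collapse variety, and on the impossibility of factoring $b_{1},b_{2}$ out of the atoms $b_{1}x,b_{2}y$ inside $B\flat X + B\flat Y$.
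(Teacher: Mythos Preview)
Your argument for the first clause is essentially the paper's: surjectivity of the comparison on the $(1,1,1)$-component forces $b(xy)$ and $(xy)b$ to be $\K$-combinations of the eight images, and relabelling $b$ as $z$ gives the two identities.

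For the Jacobi clause your route diverges from the paper's. The paper substitutes $y=z$ and $x=z$ into $z(xy)=\lambda y(zx)+\mu x(yz)$ to obtain immediately that either $\lambda=\mu=-1$ (Jacobi) or $z(zx)=0$; the latter, linearised, gives $x(yb)+y(xb)=0$, so $x(yb_2)+y(xb_1)$ lies in the kernel of the comparison, and \LACC\ plus homogeneity forces zero products (hence Jacobi trivially). You instead swap $x\leftrightarrow y$ to get $A=B$ or ``collapse'', then in the $A=B=\gamma$ branch run a cyclic-sum argument and a further case split on~$\gamma=1/2$. Both strategies bottom out in excluding the same obstruction (note that over a field of characteristic $\neq 2$, your collapse identity $y(zx)=x(yz)$ is equivalent to the paper's $z(zx)=0$, by linearising the latter and specialising the former); and the element you feed to \LACC, namely $(b_1x)y-x(b_2y)$, equals $y(xb_1)+x(yb_2)$ via anticommutativity, which is the paper's element up to a sign. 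So the underlying mechanism is the same, though the paper's substitution reaches the endgame in fewer moves.

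One point to tighten: your claim that ``the collapse case contradicts \LACC'' presupposes that $\V$ is not abelian, since if $\V$ is abelian then $b_1x=0$ and your two ``independent'' elements both vanish. This is harmless for the theorem (abelian implies Jacobi trivially), but you should say so. Relatedly, the justification that $(b_1x)y$ and $x(b_2y)$ are independent in $B\flat X+B\flat Y$ is a little informal; the clean way is to push them through the canonical map $B\flat X+B\flat Y\to (B+X)+(B+Y)$, where they become $-b_1(xy)$ and $-b_2(xy)$ by collapse, and then invoke Theorem~\ref{Theorem Homogeneity} in the free $\V$-algebra on $b_1,x,b_2,y$ to see these are independent unless $p(qr)=0$ is an identity---in which case a short direct computation of the coproduct of the abelian algebras $B\flat X$ and $B\flat Y$ still gives independence whenever $\V$ is non-abelian.
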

\begin{proof}
Since the comparison map $\links B\flat \iota_{X}\; {B\flat \iota_{Y}\rechts \colon B\flat X + B\flat Y \to B\flat (X+Y)}$ is an isomorphism, the element $b(xy)$ in $B \flat (X + Y)$ must come from an element of ${B \flat X + B \flat Y}$. Therefore, by Theorem~\ref{Theorem Homogeneity} there must exist a homogeneous identity involving $b(xy)$ that does not involve any of the monomials $b(yx), (xy)b$ or $(yx)b$. This is the first identity in the statement of the theorem.

If now $xy=-yx$ in $\V$, then we can reduce the given identities to 
\[
z(xy) = \lambda y(zx) + \mu x(yz),
\]
for some $\lambda$, $\mu \in \K$. Considering first $y = z$, and then $x = z$, we deduce that either $\lambda = \mu = -1$, or $z(zx) = 0$ is an identity of $\V$. The first case is exactly the Jacobi identity. In the second case, we see that 
\[
0 = (x + y)((x+y)b) = x(yb) + y(xb).
\]
Therefore, the comparison map sends $x(yb_2) - y(xb_1) \in B\flat X + B\flat Y$ to zero in $B \flat (X + Y)$, which via \LACC\ and Theorem~\ref{Theorem Homogeneity} implies that each algebra in $\V$ has a zero product.
\end{proof}

\section{The commutative case}\label{Section Commutative}
The aim of this section is to exclude, as much as possible, the case of commutative algebras. Here the characteristic of the field $\K$ plays a role: when $\kar(\K)=2$, the equations $xx=0$ and $xy+yx=0$ are no longer equivalent, so need to be treated separately. On the other hand, then commutative = anticommutative. 

We prove that if $\V$ is any variety of $\K$-algebras satisfying an identity of degree~2, then it is a subvariety of $\Liek$ as soon as it is a \LACC\ category. In particular then, its algebras are anticommutative. 

\begin{theorem}\label{Theorem Jacobi Jordan}
If $\K$ is an infinite field of characteristic different from $2$, then a variety of mock--Lie algebras over $\K$ is a \LACC\ category if and only if it is an abelian category. 
\end{theorem}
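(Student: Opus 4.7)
The approach is to apply Theorem~\ref{Theorem previous} and exploit commutativity and the Jacobi identity together. Writing out the identity produced by Theorem~\ref{Theorem previous} and reducing modulo $xy = yx$, the eight monomials on the right-hand side collapse pairwise: the four monomials $(zx)y$, $(xz)y$, $y(zx)$ and $y(xz)$ all coincide with $y(xz)$, and similarly $(zy)x$, $(yz)x$, $x(zy)$ and $x(yz)$ all coincide with $x(yz)$. Hence the identity simplifies to
\[
z(xy) = \alpha\,y(xz) + \beta\,x(yz)
\]
for some $\alpha, \beta \in \K$, and the companion identity for $(xy)z$ carries no additional information.

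Next I would combine this with the mock--Lie form of the Jacobi identity, $x(yz) + y(xz) + z(xy) = 0$. Substituting $z(xy) = -x(yz) - y(xz)$ yields the further identity
\[
(\alpha + 1)\,y(xz) + (\beta + 1)\,x(yz) = 0
\]
in $\V$, and applying the swap $x \leftrightarrow y$ produces the companion identity $(\alpha + 1)\,x(yz) + (\beta + 1)\,y(xz) = 0$. A short analysis of this $2 \times 2$ system (splitting on the symmetric and antisymmetric parts) leads to a dichotomy. Either $\alpha = \beta = -1$, in which case the identity from Theorem~\ref{Theorem previous} is merely a restatement of Jacobi and so carries no direct extra information, or $\V$ satisfies a nontrivial multilinear degree-$3$ relation beyond Jacobi. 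In the latter case, combining that relation with the cyclic rotations of Jacobi and using $\kar(\K) \neq 2$ forces all triple products to vanish in $\V$; the sub-case $\kar(\K) = 3$ requires a separate substitution argument at the polarization step.

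To conclude that $\V$ is abelian, I would then apply the \LACC\ isomorphism to a suitable element of higher degree in $B\flat(X+Y)$, for instance $(bx)(b'y)$ or $b(b'(xy))$. Tracking its preimage in $B\flat X + B\flat Y$ and invoking the injectivity of the comparison map together with Theorem~\ref{Theorem Homogeneity} gives an additional identity which, in both remaining scenarios (the Jacobi-compatible case and the triple-products-vanishing case), collapses to $xy = 0$. The main obstacle is precisely this last step: in the Jacobi-compatible case $\alpha = \beta = -1$ the degree-$3$ identity from Theorem~\ref{Theorem previous} says nothing beyond Jacobi, so the abelian conclusion has to be squeezed out of \LACC\ applied at higher degree, exploiting injectivity rather than just surjectivity of the comparison map.
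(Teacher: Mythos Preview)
Your reduction via Theorem~\ref{Theorem previous} and Jacobi to the dichotomy ``$\alpha=\beta=-1$ or a further degree-$3$ relation holds'' is sound (modulo the characteristic-$3$ detail you flag), but the final step has a genuine gap that you yourself identify as ``the main obstacle.'' You propose to apply \LACC\ to $(bx)(b'y)$ and ``track its preimage,'' but $(b_1x)(b_2'y)$ is already a perfectly good preimage of $(bx)(b'y)$ under the comparison map, so injectivity by itself tells you nothing. What is needed is an \emph{identity} of $\V$ in which $(bx)(b'y)$ appears alongside monomials of a different homogeneity type (with respect to the $b_1$/$b_2$ grading), so that after lifting to $B\flat X+B\flat Y$ and invoking Theorem~\ref{Theorem Homogeneity} the cross term is isolated and forced to vanish. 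Your sketch does not produce such an identity, and in the crucial case $\alpha=\beta=-1$ the only degree-$3$ identity available is Jacobi, which does not contain a term of this shape.

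The paper's proof supplies exactly the missing identity, and in doing so bypasses Theorem~\ref{Theorem previous} and your case split altogether. It expands $(bb')(xy)$ in two ways using only Jacobi and commutativity: first splitting off $x$ and $y$ to obtain a sum of terms of the form $\big((\ast\,\ast)\ast\big)y$ and $\big((\ast\,\ast)\ast\big)x$; then instead splitting off $b$ and $b'$, which after a second application of Jacobi produces the same kind of terms \emph{plus} the cross terms $(bx)(b'y)+(b'x)(by)$. Subtracting and using $\kar(\K)\neq 2$ gives an identity in $\V$ in which these cross terms are the only monomials whose lift to $B\flat X+B\flat Y$ mixes a $b_1$-factor with a $b_2'$-factor. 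Theorem~\ref{Theorem Homogeneity} then forces $(b_1x)(b_2'y)=0$ in $B\flat X+B\flat Y$, and the usual argument yields $xy=0$. This single degree-$4$ computation covers both your Jacobi-compatible case and your triple-products-vanishing case at once, so the preliminary dichotomy is not needed.
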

\begin{proof}
Let $\V$ be a variety of mock--Lie algebras over $\K$, which by definition are commutative and satisfy the Jacobi identity. Suppose that $\V$ is a \LACC\ category. On one hand we know that 
\begin{align*}
(bb')(xy) &= - \big((bb')x)\big)y - \big((bb')y)\big)x \\
{} &= \big((bx)b'\big)y + \big((b'x)b\big)y + \big((by)b'\big)x + \big((b'y)b\big)x
\end{align*}
is an identity of $\V$; on the other hand, 
\begin{align*}
(bb')(xy) &= - \big(b(xy)\big)b' - \big(b'(xy)\big)b \\
{} &= \big((bx)y\big)b' + \big((by)x\big)b' + \big((b'x)y\big)b + \big((b'y)x\big)b \\
{} &= -(bx)(yb') - \big((bx)b'\big)y - (by)(b'x) - \big((by)b'\big)x \\
&\qquad\qquad- (b'x)(by) - \big((b'x)b\big)y - (b'y)(bx) - \big((b'y)b\big)x.
\end{align*}
Therefore, using that $1+1\neq 0$ in $\K$, we find that
\begin{align*}
\big((bx)b'\big)y + \big((b'x)b\big)y + \big((by)b'\big)x + \big((b'y)b\big)x + (bx)(b'y) + (b'x)(by) = 0
\end{align*}
is an identity in $\V$. Hence since $\V$ is \LACC, in the sum $B\flat X + B\flat Y$ we have
\begin{multline*}
\big((b_1x)b_1'\big)y + \big((b_1'x)b_1\big)y + \big((b_2y)b_2'\big)x + \big((b_2'y)b_2\big)x \\+ (b_1x)(b_2'y) + (b_1'x)(b_2y) = 0.
\end{multline*}
This equality holds in the free algebra $B+X+B+Y$, so that by Theorem~\ref{Theorem Homogeneity}, $(b_1x)(b_2'y)=0$ is an identity in $\V$. Now the element $(b_1x)(b_2'y)$ of $B\flat X + B\flat Y$ is sent by $\links B\flat \iota_{X}\; {B\flat \iota_{Y}\rechts \colon B\flat X + B\flat Y \to B\flat (X+Y)}$ to $(bx)(b'y)$, which is thus zero in~$B\flat (X + Y)$. As a consequence, $(b_1x)(b_2'y)$ is zero in $B\flat X + B\flat Y$. Hence, either $pq=0$ is an identity in $\V$, or $0\in \{b_1x,b_2'y\}$ in one of the free algebras ${B+X}$ or~${B+Y}$. Therefore each algebra in $\V$ has a zero product, which makes $\V$ an abelian category.
\end{proof}

\begin{theorem}\label{Theorem Commutative}
Let $\K$ be an infinite field of characteristic different from $2$. Let $\V$ be a variety of commutative $\K$-algebras. If $\V$ is a \LACC\ category, then it is an abelian category.
\end{theorem}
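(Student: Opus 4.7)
The plan is to follow the strategy of Theorem~\ref{Theorem Jacobi Jordan}, replacing the Jacobi identity with the more general identity provided by Theorem~\ref{Theorem previous}. First I apply Theorem~\ref{Theorem previous} to $\V$, obtaining an identity of the form $z(xy) = \sum_{i=1}^{8} \lambda_{i}(\cdots)$ with $\lambda_i \in \K$. Commutativity collapses the eight monomials on the right into just two classes (everything reduces either to $y(xz)$ or to $x(yz)$), giving $z(xy) = \alpha\, y(xz) + \beta\, x(yz)$ for some $\alpha, \beta \in \K$. The symmetry $z(xy) = z(yx)$, applied by swapping $x \leftrightarrow y$ in the identity, yields $(\alpha - \beta)(y(xz) - x(yz)) = 0$. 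Thus either $\alpha = \beta$, or $x(yz) = y(xz)$ is an identity of $\V$; the latter, combined with commutativity, is equivalent to full associativity, and will be treated as a separate case.

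Assume $\alpha = \beta$, so the identity becomes $z(xy) = \alpha(x(yz) + y(xz))$. If $\alpha = -1$, this is the Jacobi identity (in commutative form), so $\V$ is a subvariety of mock--Lie algebras and Theorem~\ref{Theorem Jacobi Jordan} applies. Otherwise I apply the identity in three places: to $(bx)(b'y) = (b'y)(bx)$ with $z = b'y$, to $b(x(b'y))$ with $z = b$, and to $x(b(b'y))$ with $z = x$. Subtracting the latter two gives $(1 + \alpha)(b(x(b'y)) - x(b(b'y))) = 0$, so these two expressions agree (call the common value $P$) since $\alpha \neq -1$. Substituting back into the three relations and solving the resulting linear system collapses cleanly to $(2\alpha - 1)(\alpha + 1) P = 0$, so either $\alpha = 1/2$ or $P = 0$; in the latter case $(bx)(b'y) = 2\alpha P = 0$ is an identity of $\V$. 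For $\alpha = 1/2$ (which requires $\kar(\K) \neq 2$), substituting $z = x$ into the main identity yields $x(xy) = y\,x^{2}$, and a linearisation of this produces $x(yz) = y(xz)$, thereby reducing to the associative sub-case.

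In the two remaining situations---either $(bx)(b'y) = 0$ is an identity in $\V$, or $\V$ is commutative associative---I invoke LACC together with Theorem~\ref{Theorem Homogeneity}, as in the closing steps of Theorem~\ref{Theorem Jacobi Jordan}. In the first case, lifting the identity to the coproduct $B\flat X + B\flat Y$ and extracting the homogeneous component of type $(1,0,0,1,1,1)$ in $(b_1, b_1', b_2, b_2', x, y)$ forces $(b_1 x)(b_2' y) = 0$ in $B\flat X + B\flat Y$, from which the argument closing the proof of Theorem~\ref{Theorem Jacobi Jordan} shows that $\V$ is abelian. In the associative case, $(b_1 x)(b_2' y)$ and $(b_1' x)(b_2 y)$ are distinct monomials of different types in $B\flat X + B\flat Y$, but both are mapped to $(bx)(b'y)$ in $B\flat(X+Y)$; injectivity of the comparison map combined with Theorem~\ref{Theorem Homogeneity} again forces each to vanish, and the same closing argument applies. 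The main obstacle is the bookkeeping in the $\alpha = \beta$ case: the three nested applications of the identity, intertwined with repeated commutativity manipulations to reorder factors, must be tracked precisely in order for the resulting equations to collapse to the clean polynomial relation $(2\alpha - 1)(\alpha + 1) P = 0$.
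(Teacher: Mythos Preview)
Your argument is correct, but it takes a considerably longer route than the paper's. After reducing the identity from Theorem~\ref{Theorem previous} to $z(xy)=\alpha\,y(xz)+\beta\,x(yz)$, the paper immediately exploits \LACC: from $b(xy)=b(yx)$ one obtains $(\alpha-\beta)(b_1x)y=0$ in $B\flat X+B\flat Y$ by lifting and applying Theorem~\ref{Theorem Homogeneity}, which forces $\alpha=\beta$ outright (no associative sub-case appears). A second application of the same trick---computing $(bx)y$ via the identity, lifting, and separating by homogeneity---yields the pair of scalar equations $\alpha^2-1=0$ and $\alpha^2+\alpha=0$, pinning down $\alpha=-1$ directly. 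Thus the paper lands in the mock--Lie case in two short steps and invokes Theorem~\ref{Theorem Jacobi Jordan}.

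By contrast, you use only the bare $x\leftrightarrow y$ symmetry (without lifting) at the first step, which leaves the associative branch open, and then replace the paper's three-variable \LACC\ computation with a four-variable one involving $(bx)(b'y)$, $b(x(b'y))$, $x(b(b'y))$, producing the further sub-cases $\alpha=\tfrac12$ and $P=0$. Each of these you then close correctly (and your $\alpha=\tfrac12$ case is harmless in characteristic~$3$, since there $\tfrac12=-1$ coincides with the mock--Lie case). What the paper's approach buys is economy: by deploying the \LACC\ lift plus homogeneity at the earliest possible moment, it avoids the associative sub-case, the four-variable bookkeeping, and the separate $(bx)(b'y)=0$ analysis entirely.
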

\begin{proof}
By Theorem~\ref{Theorem previous} we know that $\V$ has to satisfy an identity of the form 
\[
z(xy) = \lambda (zx)y + \lambda' (zy)x.
\]
Therefore
\begin{align*}
\lambda (bx)y + \lambda' (by)x = b(xy) = b(yx) = \lambda (by)x + \lambda' (bx)y,
\end{align*}
so in $B\flat X + B\flat Y$ we have
\begin{align*}
\lambda (b_1x)y + \lambda' (b_2y)x = \lambda (b_2y)x + \lambda' (b_1x)y.
\end{align*} 
Hence $\lambda (b_1x)y - \lambda' (b_1x)y=0$ by Theorem~\ref{Theorem Homogeneity}, 
and $\lambda = \lambda'$.
By the computation
\begin{align*}
(bx)y = \lambda b(xy) + \lambda (by)x = \lambda^2(bx)y + \lambda^2(by)x + \lambda (by)x,
\end{align*}
we know that 
\[
(\lambda^2 - 1)(b_1x)y + (\lambda^2 + \lambda) (b_2y)x = 0
\]
holds in $B\flat X + B\flat Y$, so $\lambda$ must be $-1$. Then $\V$ is a variety of mock--Lie algebras. Since it is a \LACC\ category, by Theorem~\ref{Theorem Jacobi Jordan} it is an abelian category.
\end{proof}

\begin{corollary}\label{Corollary degree 2}
If $\K$ is an infinite field of characteristic different from $2$. Let $\V$ be a variety of $\K$-algebras satisfying a non-trivial identity of degree~$2$. If $\V$ is a \LACC\ category, then it is a subvariety of~$\Liek$.
\end{corollary}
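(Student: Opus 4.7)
The plan is to exploit Theorem \ref{Theorem Homogeneity} to reduce to a homogeneous identity of degree exactly $2$, and then to do a short case analysis on its type. A homogeneous identity of degree $2$ in the variables of a free $\V$-algebra is either of type $(2)$, hence of the form $\alpha xx = 0$ with $\alpha \neq 0$ (equivalently $xx = 0$), or of type $(1,1)$, hence of the form $\alpha xy + \beta yx = 0$ with $(\alpha,\beta)\neq (0,0)$. Any lower-degree homogeneous components that might accompany the given identity are themselves identities of $\V$; being of degree $\leq 1$ they are either trivial or force $\V$ to be trivial, so we may concentrate on the degree-$2$ parts.

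In the first case ($xx = 0$) I would polarize, substituting $x+y$ for $x$; using that $\kar(\K)\neq 2$ this yields $xy + yx = 0$, so $\V$ is anticommutative. The final clause of Theorem \ref{Theorem previous} then gives the Jacobi identity, and since anticommutativity is equivalent to the alternating identity in characteristic different from $2$, we conclude $\V \subseteq \Liek$.

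In the second case I would first dispose of the degenerate subcase where $\alpha = 0$ or $\beta = 0$: then $xy = 0$ holds identically (possibly after swapping the variables), so $\V$ is abelian and is trivially a subvariety of $\Liek$. Otherwise write the identity as $xy = c\, yx$ with $c := -\beta/\alpha \neq 0$; swapping $x$ and $y$ and iterating gives $yx = c\, xy = c^2\, yx$, which forces $c^2 = 1$ (else $yx = 0$ identically and $\V$ is again abelian). If $c = 1$ the variety is commutative and Theorem \ref{Theorem Commutative} shows it is abelian; if $c = -1$ the variety is anticommutative and Theorem \ref{Theorem previous} produces the Jacobi identity exactly as in the first case.

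I do not expect a real obstacle: all the genuine work sits in Theorems \ref{Theorem Commutative} and \ref{Theorem previous}, and the corollary amounts to packaging these together with the elementary classification of homogeneous degree-$2$ identities.
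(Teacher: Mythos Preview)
Your proposal is correct and follows essentially the same route as the paper's proof: reduce to a homogeneous degree~$2$ identity, observe that up to the equivalence $xx=0\Leftrightarrow xy+yx=0$ (valid since $\kar(\K)\neq 2$) it has the form $xy+\lambda yx=0$, use the iteration trick to force $\lambda^2=1$ or abelianness, and then invoke Theorem~\ref{Theorem Commutative} for the commutative case and Theorem~\ref{Theorem previous} for the anticommutative one. The only cosmetic difference is that you keep the type~$(2)$ and type~$(1,1)$ cases separate and polarise explicitly, whereas the paper folds the former into the latter at the outset.
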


\begin{proof}
There are essentially only two types of non-trivial degree $2$ identities. In characteristic different from $2$, the equation $xx = 0$ is equivalent to $xy + yx = 0$, so without loss of generality we can assume that the variety $\V$ under consideration satisfies an identity of the form $xy + \lambda yx = 0$, with $\lambda \in \K$.
In this case,
\begin{align*}
0 = xy + \lambda yx = xy + \lambda (- \lambda xy) = xy - \lambda^2 xy = (1 - \lambda^2) xy,
\end{align*}
so unless $\lambda$ equals to $1$ or $-1$, each algebra in $\V$ has a zero product, which makes $\V$ an abelian category. By the previous theorem, the same happens when $\lambda = -1$. The remaining case ($\lambda = 1$) is covered by Theorem~\ref{Theorem previous}, which establishes that $\V$ must be a subvariety of $\Liek$.
\end{proof}

Infinite fields of characteristic $2$ need special care, because of the difference between the identities $xx=0$ and $xy+yx=0$: the first implies the second, but not the other way round. Recall that $\Liek$ denotes the variety of $\K$-algebras satisfying the Jacobi identity and $xx=0$, and $\qLiek$ the variety of $\K$-algebras satisfying the Jacobi identity and $xy=-yx$ (which here is equivalent to $xy=yx$). Note that $\Liek$ is a subvariety of $\qLiek$ which is always strictly smaller.

\begin{theorem}\label{Theorem JJAlg char 2}
For any field $\K$ of characteristic $2$, the variety $\qLiek$ is a \LACC\ category.
\end{theorem}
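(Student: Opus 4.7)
The plan is to verify \LACC\ directly by showing that the canonical comparison map
\[
\langle B\flat \iota_{X},\; B\flat \iota_{Y}\rangle \colon B\flat X + B\flat Y \to B\flat(X+Y)
\]
is an isomorphism for free quasi-Lie algebras $B$, $X$, $Y$, adapting Gray's strategy for $\Liek$ from \cite{GrayLie} to the characteristic $2$ setting.

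The essential structural ingredient forced by characteristic $2$ is the following. Setting $y=z=a$ in the Jacobi identity gives
\[
x(aa) + a(ax) + a(xa) = 0,
\]
and then $ax = xa$ together with $1 + 1 = 0$ collapses this to $x(aa) = 0$. Hence $x \cdot a^{2} = a^{2}\cdot x = 0$ for all $x$, $a$ in any quasi-Lie algebra, so the subspace of squares is a null ideal. It follows that the free quasi-Lie algebra on a set $S$ decomposes as $\Liek(S)$ extended by a Frobenius-semilinear span of squares of a Hall basis of $\Liek(S)$, and the same pattern propagates to all the coproducts $B+X$, $B+Y$, $B+X+Y$ that define our $B\flat(-)$.

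With this structural picture in hand, surjectivity of the comparison map follows from Jacobi plus anticommutativity: a mixed monomial such as $b(xy)$ rewrites via Jacobi as $x(by) + y(bx) = x(b_{2}y) + y(b_{1}x)$, which lies in the image; while any mixed square $u^{2}$ whose underlying $u$ involves both $x$- and $y$-variables either sits in the null ideal of $B\flat(X+Y)$ or expands (because squaring is additive in characteristic $2$) into pure squares plus cross-terms already handled by the Lie part. Injectivity would be obtained by comparing Hall-type bases on the two sides: on the Lie-part quotient the argument reduces to Gray's theorem, which applies over any commutative ring with unit (hence in characteristic~$2$); on the kernel of squares the comparison is routine because multiplication there is identically zero.

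The main obstacle will be making the squares-part bookkeeping rigorous. Since squaring is $\K$-linear only after a Frobenius twist and is not visible at the level of the Jacobi-style rewriting used for the Lie part, one has to carefully describe which squares survive in $B\flat(X+Y)$ versus which come from $B\flat X + B\flat Y$, and verify bijectivity on these abelian square-ideals before combining the Lie and squares pieces via a short-five-lemma-style argument to deduce that $\langle B\flat \iota_{X},\; B\flat \iota_{Y}\rangle$ is an isomorphism.
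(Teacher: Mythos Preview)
Your route differs substantially from the paper's, and you have also misidentified Gray's method. In \cite{GrayLie} Gray does \emph{not} verify the comparison map; he constructs the right adjoint to $B\flat(-)$ explicitly, for every object $B$, by writing down the algebraic exponential and checking it has the required universal property. The paper's proof simply repeats that construction word for word: the only place where the alternating law $xx=0$ is invoked in Gray's argument is the verification that a certain iterated expression $(ff)_n(w)$ vanishes, and in $\qLiek$ over a field of characteristic~$2$ one replaces this by the equally easy check that $(fg)_n(w)+(gf)_n(w)=0$, which follows from $fg=gf$. Everything else in Gray's proof goes through unchanged. This is far shorter than the decomposition you propose and avoids the comparison-map formulation entirely.

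Your approach also has gaps beyond the one you acknowledge. Checking the comparison map only for \emph{free} $B$, $X$, $Y$ does not establish \LACC, which requires $B\flat(-)$ to preserve binary sums for \emph{all} objects; the paper invokes free objects only to manufacture obstructions, never to confirm \LACC, and no reduction-to-free-objects lemma is available here. Moreover, the short-five-lemma step would require that the $\qLiek$-coproduct $B\flat X + B\flat Y$ of the (typically non-free) algebras $B\flat X$ and $B\flat Y$ itself decomposes naturally into a $\Liek$-coproduct piece and a squares piece, compatibly with the corresponding decomposition of $B\flat(X+Y)$. This is not automatic from the decomposition of free $\qLiek$-algebras: you would need a separate argument about how abelian null ideals behave under coproducts in $\qLiek$, and that is at least as much work as the direct adaptation of Gray's construction.
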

\begin{proof}
The proof of Theorem~2.2 in~\cite{GrayLie} can be repeated word for word, with the exception of the passage which shows that $(ff)_n(w)=0$. Rather, a similar argument may be given that proves $(fg)_n(w)+(gf)_n(w)=0$.
\end{proof}

\begin{corollary}\label{Corollary qLie}
Let $\K$ be an infinite field of characteristic $2$. Let $\V$ be a variety of $\K$-algebras satisfying a non-trivial identity of degree~$2$. If $\V$ is a \LACC\ category, then it is a subvariety of $\qLiek$.
\end{corollary}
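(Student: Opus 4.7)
The plan is to follow the structure of the proof of Corollary \ref{Corollary degree 2}, adapting each step to the characteristic $2$ setting where $-1 = 1$ and where the identities $xx = 0$ and $xy+yx = 0$ are no longer equivalent. The end goal is to reduce everything to the anticommutative hypothesis of the second half of Theorem \ref{Theorem previous}, and then invoke the Jacobi conclusion there to place $\V$ inside $\qLiek$.

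First, by Theorem \ref{Theorem Homogeneity}, any identity of $\V$ decomposes into homogeneous components which are themselves identities, so the hypothesised non-trivial degree $2$ identity may be taken homogeneous. Up to scalar this forces one of two shapes: a type $(2)$ identity $xx = 0$, or a type $(1,1)$ identity $axy + byx = 0$ with $(a,b) \ne (0,0)$. In contrast with the proof of Corollary \ref{Corollary degree 2}, these two cases genuinely have to be treated separately here, since in characteristic $2$ the first is strictly stronger than $xy + yx = 0$.

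In the type $(1,1)$ case I would repeat the argument already used in Corollary \ref{Corollary degree 2}: if $a = 0$ or $b = 0$ then $\V$ is abelian, and otherwise one may write $xy = \lambda yx$ for some $\lambda \in \K^{\times}$, so that iterating gives $(1-\lambda^{2})xy = 0$. The only novelty is that in characteristic $2$ one has $1 - \lambda^{2} = (1-\lambda)^{2}$, so the only option apart from $\V$ being abelian is $\lambda = 1$, which in characteristic $2$ reads $xy + yx = 0$. In the type $(2)$ case I would linearise by the substitution $x \mapsto x + y$:
\[
0 = (x+y)(x+y) = xx + xy + yx + yy = xy + yx,
\]
directly recovering anticommutativity.

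At this point every surviving case satisfies $xy + yx = 0$, so the second half of Theorem \ref{Theorem previous} applies and produces the Jacobi identity in $\V$; together these exhibit $\V$ as a subvariety of $\qLiek$. Abelian varieties are trivially such subvarieties, which closes off the remaining cases. I do not foresee a serious obstacle, since the whole argument is a careful bookkeeping of Corollary \ref{Corollary degree 2} through the characteristic $2$ identifications; the only subtle point is the need to reduce $xx = 0$ to $xy + yx = 0$ by linearisation, which is exactly what characteristic $2$ makes non-automatic.
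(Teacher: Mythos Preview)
Your proposal is correct and follows essentially the same approach as the paper, which simply records the result as ``a straightforward variation on the proof of Corollary~\ref{Corollary degree 2}''. Your unpacking of that variation---handling the type~$(2)$ identity $xx=0$ separately via linearisation, and noting that in characteristic~$2$ the equation $(1-\lambda^{2})xy=0$ has $\lambda=1$ as its unique non-abelian solution so that the commutative and anticommutative cases collapse---is exactly the adaptation the paper has in mind.
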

\begin{proof}
This is a straightforward variation on the proof of Corollary~\ref{Corollary degree 2}.
\end{proof}

\section{The general case}\label{Section General Case}

We now extend the above to the case when the variety does not satisfy any degree~2 identities (besides, of course, the multiples of the trivial identity $xy = xy$). We first treat the case of fields of characteristic zero.

\begin{theorem}\label{Theorem degree 3}
Let $\K$ be a field of characteristic zero. If a variety of non-associative $\K$-algebras does not satisfy any non-trivial identities of degree~$2$, then it cannot form a~\LACC\ category.
\end{theorem}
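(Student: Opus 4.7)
The setup is as follows: assume $\V$ is a \LACC\ variety of non-associative $\K$-algebras with no non-trivial identity of degree~$2$, and derive a contradiction. By Theorem~\ref{Theorem previous} there exist scalars $\lambda_{1},\ldots,\lambda_{8},\mu_{1},\ldots,\mu_{8}\in \K$ making the two displayed identities hold in $\V$. These sixteen parameters are the unknowns. The plan is to apply these identities to a family of degree-$4$ polynomials in $B\flat(X+Y)$, to pull the resulting relations back to $B\flat X + B\flat Y$ via \LACC, and to collect polynomial constraints on the $\lambda_i$ and $\mu_i$ which I would finally show are simultaneously inconsistent over $\Q$.

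Concretely, I would first enumerate those degree-$4$ monomials $(uv)(u'v')$ of $B\flat(X+Y)$ with $u,v,u',v' \in \{b,b',x,y\}$ that contain at least one $x$ and at least one $y$. Each such monomial admits two genuinely different rewritings via Theorem~\ref{Theorem previous}: one may treat $uv$ as the outer left factor and apply the $z(xy)$-identity with $z=uv$, or treat $u'v'$ as the outer right factor and apply the $(xy)z$-identity with $z=u'v'$. In either case any inner non-atomic subproduct obtained along the way can be rewritten further in the same manner. Equating two distinct expansions of the same monomial produces a polynomial identity that must hold in $\V$.

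Next I would interpret each such identity inside $B\flat X + B\flat Y$ by replacing every $b,b'$ appearing in an $X$-cluster by $b_{1},b_{1}'$ and every $b,b'$ appearing in a $Y$-cluster by $b_{2},b_{2}'$. Because the comparison map $\links B\flat \iota_{X}\; B\flat \iota_{Y}\rechts$ is injective, and because Theorem~\ref{Theorem Homogeneity} lets me isolate homogeneous components, every homogeneous piece of the resulting expression must vanish in $B\flat X + B\flat Y$. The hypothesis that $\V$ carries no non-trivial identity of degree~$2$ is decisive here: since products of length two in the free $\V$-algebras $B+X$ and $B+Y$ are subject to no hidden relations, the reduced monomials that occur after substitution remain linearly independent, so each coefficient yields an individual polynomial equation in the $\lambda_i$ and $\mu_i$.

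Systematically carrying out this bookkeeping across the chosen family of ancestors produces the system of $128$ polynomial equations $f_1=\cdots=f_{128}=0$ over $\Q$ recorded in Appendix~\ref{Appendix}. The main obstacle is computational rather than conceptual: I would feed the ideal $\langle f_1,\ldots,f_{128}\rangle \subseteq \Q[\lambda_1,\ldots,\lambda_8,\mu_1,\ldots,\mu_8]$ into \textsc{Singular} and verify that a reduced Gröbner basis is $\{1\}$, so that the ideal equals the whole polynomial ring. No choice of $\lambda_i,\mu_i$ can then satisfy all constraints simultaneously, contradicting the existence of the scalars supplied by Theorem~\ref{Theorem previous}, and ruling out that $\V$ is \LACC.
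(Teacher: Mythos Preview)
Your overall strategy matches the paper's exactly: derive polynomial constraints on the sixteen parameters $\lambda_i,\mu_i$ by applying the two identities of Theorem~\ref{Theorem previous} inside $B\flat(X+Y)$, pull the resulting relations back along the \LACC\ isomorphism, split into homogeneous pieces via Theorem~\ref{Theorem Homogeneity}, use the absence of non-trivial degree-$2$ identities to force linear independence of the reduced monomials so that each coefficient vanishes, and then check with \textsc{Singular} that the ideal generated by these constraints is the unit ideal.

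There is, however, a concrete discrepancy between the procedure you describe and the $128$ equations in Appendix~\ref{Appendix}. The paper does \emph{not} obtain all of them from degree-$4$ monomials of the shape $(uv)(u'v')$. The first thirty-two equations $f_1,\dots,f_{32}$ come from the degree-$3$ monomials $(bx)y$, $(xb)y$, $y(bx)$, $y(xb)$: one applies the $\mu$- or $\lambda$-identity once, then further expands the four terms that still contain $xy$ or $yx$ as a subfactor, and collects the eight coefficients. Only $f_{33},\dots,f_{128}$ come from degree-$4$ monomials, and in fact from just two of them, $(bb')(xy)$ and $(xy)(bb')$, each expanded in the two ways you describe. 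Your plan to work through the full family of degree-$4$ products $(uv)(u'v')$ would therefore produce a \emph{different} system of equations, not the one in the appendix; in particular the purely quadratic relations $f_1,\dots,f_{32}$ would not appear directly. Since the paper explicitly remarks that the thirty-two degree-$3$ equations alone still have solutions, both ingredients are genuinely needed for the appendix system; whether your alternative degree-$4$-only system is already inconsistent would require its own computer verification. To align with the paper, add the degree-$3$ step before passing to $(bb')(xy)$ and $(xy)(bb')$.
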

\begin{proof}
Assume that $\V$ is a variety of non-associative algebras with all of its non-trivial identities of degree~$3$ or higher. If $\V$ is \LACC, then by Theorem~\ref{Theorem previous} we know that 
there exist $\lambda_{1}$,~\dots,~$\lambda_{8}$, $\mu_{1}$,~\dots,~$\mu_{8}$ in $\K$ such that
\begin{align*}
z(xy)=
\lambda_{1}(zx)y&+\lambda_{2}(xz)y+
\lambda_{3}y(zx)+\lambda_{4}y(xz)\\
&+\lambda_{5}(zy)x+\lambda_{6}(yz)x+
\lambda_{7}x(zy)+\lambda_{8}x(yz)
\end{align*}
and 
\begin{align*}
(xy)z=
\mu_{1}(zx)y&+\mu_{2}(xz)y+
\mu_{3}y(zx)+\mu_{4}y(xz)\\
&+\mu_{5}(zy)x+\mu_{6}(yz)x+
\mu_{7}x(zy)+\mu_{8}x(yz)
\end{align*}
are identities in $\V$.
As we shall see, the isomorphism
\[
\links B\flat \iota_{X}\; B\flat \iota_{Y}\rechts \colon{B\flat X + B \flat Y\to B\flat (X + Y)}
\]
will force the coefficients $\lambda_i$ and $\mu_i$ to satisfy a certain system of polynomial equations. The strategy of our proof is to use the hypothesis that there are no non-trivial identities of degree~$2$ in $\V$ in order to find enough equations for the set of solutions of this system to be empty. This then means that \LACC\ cannot hold in $\V$. 

First we compute
\begin{align*}
(bx)y = \mu_1 (yb)x &+ \mu_2 x(yb) + \mu_3 x(yb) + \mu_4 x(by)\\ &+ \mu_5 (yx)b + \mu_6 (xy)b + \mu_7 b(yx) + \mu_8 b(xy).
\end{align*}
Further decomposing the last four monomials, we obtain an identity
\begin{align*}
0 = f_1 (bx)y &+ f_2 (xb)y + f_{3}y(bx) + f_{4}y(xb)\\ &+ f_{5}(by)x + f_{6}(yb)x + f_{7}x(by) + f_{8}x(yb)
\end{align*}
in $\V$, where the coefficients $f_i$ are taken in the ring of (commutative) polynomials
\[
R=\K[\lambda_1, \dots, \lambda_8, \mu_1, \dots, \mu_8].
\]
An explicit description of these coefficients is given in Appendix~\ref{Appendix}.

In $B\flat X + B \flat Y$, the polynomial
\begin{align*}
f_1 (b_1x)y &+ f_2 (xb_1)y + f_{3}y(b_1x) + f_{4}y(xb_1) \\ &+ f_{5}(b_2y)x + f_{6}(yb_2)x + f_{7}x(b_2y) + f_{8}x(yb_2)
\end{align*}
is zero, because the comparison map $\links B\flat \iota_{X}\; B\flat \iota_{Y}\rechts \colon{B\flat X + B \flat Y\to B\flat (X + Y)}$ is an isomorphism. Since $\V$ does not satisfy any equation of degree~$2$, and since in ${B\flat X + B \flat Y}$ all the monomials of the given polynomial have a degree of at most two---for instance, $(b_1x)y$ is a word formed by two letters: $b_1x$ and $y$---these monomials are linearly independent. Therefore, all of the $f_i$ must be zero.

We may proceed in the same way, decomposing the monomials $(xb)y$, $y(bx)$ and~$y(xb)$, thus obtaining additional polynomials $f_9$, \dots, $f_{32}$ in $R$ that need to vanish. (Again, see Appendix~\ref{Appendix}.) These thirty-two equations $(f_i=0)_{1\leq i\leq 32}$ might already seem a rather large system of polynomial equations, but unfortunately its set of solutions is not empty, so it does not give us the desired contradiction. Hence, to complete our proof, we need to enlarge the system by finding additional equations.

Let us now consider the monomial $(bb')(xy)$. On one hand, it is equal to 
\begin{align*}
\lambda_1 \big((bb')x\big)y &+ \lambda_2 \big(x(bb')\big)y + \lambda_3 y\big((bb')x\big) + \lambda_4 y\big(x(bb')\big) \\
&+ \lambda_5 \big((bb')y\big)x + \lambda_6 \big(y(bb')\big)x + \lambda_7 x\big((bb')y\big) + \lambda_8 x\big(y(bb')\big).
\end{align*}
We can decompose each of the inner brackets to see that
\begin{align*}
(bb')(xy) = \varphi_1(b, b', x)y + y\varphi_2(b, b', x) + \varphi_3(b, b', y)x + x\varphi_4(b, b', y),
\end{align*}
where
\begin{align*}
\varphi_i(b, b', z) = g_{i,1} (bz)b' &+ g_{i,2} (zb)b' + g_{i,3}b'(bz) + g_{i,4}b'(zb)\\ &+ g_{i,5}(b'z)b + g_{i,6}(zb')b + g_{i,7}b(b'z) + g_{i,8}b(zb')
\end{align*}
for some $g_{i,j}\in R$.

On the other hand, $(bb')(xy)$ is equal to
\begin{align*}
\mu_1 \big((xy)b\big)b' &+ \mu_2 \big(b(xy)\big)b' + \mu_3 b'\big((xy)b\big) + \mu_4 b'\big(b(xy)\big) \\
&+ \mu_5 \big((xy)b'\big)b + \mu_6 \big(b'(xy)\big)b + \mu_7 b\big((xy)b'\big) + \mu_8 b\big(b'(xy)\big).
\end{align*}
Expanding first the inner bracket, and then the outer one, we obtain
\begin{multline*}
\psi_1(b, b', x)y + y\psi_2(b, b', x) + \psi_3(b, b', y)x + x\psi_4(b, b', y) \\
+ \psi_5((b,x), (b',y))+ \psi_6((b,y) , (b',x))
\end{multline*}
where
\begin{align*}
\psi_i(b, b', z) = h_{i,1} (bz)b' &+ h_{i,2} (zb)b' + h_{i,3}b'(bz) + h_{i,4}b'(zb)\\ &+ h_{i,5}(b'z)b + h_{i,6}(zb')b + h_{i,7}b(b'z) + h_{i,8}b(zb')
\end{align*}
for some $h_{i,j}\in R$, while 
\begin{align*}
\psi_5((b,x), (b',y)) = f_{33} (b'y)(bx) &+ f_{34} (yb')(bx) + f_{35} (bx)(b'y) \\
&+ f_{36} (bx)(yb') + f_{37} (b'y)(xb)\\
& + f_{38} (yb')(xb) + f_{39} (xb)(b'y)\\
& + f_{40} (xb)(yb')
\end{align*}
and
\begin{align*}
\psi_6((b, y), (b', x)) = f_{41} (b'x)(by) &+ f_{42} (xb')(by)+ f_{43} (by)(b'x)\\
& + f_{44} (by)(xb') + f_{45} (b'x)(yb) \\
&+ f_{46} (xb')(yb)+ f_{47} (yb)(b'x)\\
& + f_{48} (yb)(xb'),
\end{align*}
for some $f_{33}$, \dots, $f_{48}\in R$.

Thus we have obtained the identity 
\begin{align*}
0 = \big(\psi_1(b, b', x)&-\varphi_1(b, b', x)\big)y + y\big(\psi_2(b, b', x) - \varphi_1(b, b', x)\big) \\
{} &+ \big(\psi_3(b, b', y)-\varphi_1(b, b', y)\big)x + x\big(\psi_4(b, b', y)-\varphi_1(b, b', y)\big) \\
{} &+ \psi_5((b,x), (b',y)) + \psi_6((b, y), (b', x))
\end{align*}
in $B \flat (X + Y)$, which gives rise to the identity
\begin{align*}
0 = \big(\psi_1(b_1, b'_1, x)&-\varphi_1(b_1, b'_1, x)\big)y + y\big(\psi_2(b_1, b'_1, x) - \varphi_1(b_1, b'_1, x)\big) \\
{} &+ \big(\psi_3(b_2, b'_2, y)-\varphi_1(b_2, b'_2, y)\big)x + x\big(\psi_4(b_2, b'_2, y)-\varphi_1(b_2, b'_2, y)\big) \\
{} &+ \psi_5((b_1,x), (b'_2,y)) + \psi_6((b_2, y), (b'_1, x))
\end{align*}
in $B \flat X + B\flat Y$. Theorem~\ref{Theorem Homogeneity} decomposes it into four separate identities. 

First of all, the polynomials $\psi_5((b_1,x), (b'_2,y))$ and $\psi_6((b_2, y), (b'_1, x))$ must vanish. Since there are no non-trivial identities of degree~$2$ in $\V$, their monomials are linearly independent, so $f_{33}$, \dots, $f_{48}$ are all zero.

Next, $\big(\psi_1(b_1, b'_1, x)-\varphi_1(b_1, b'_1, x)\big)y + y\big(\psi_2(b_1, b'_1, x) - \varphi_1(b_1, b'_1, x)\big)$ has to be zero in $B\flat X + B\flat Y$. Again, since there are no non-trivial identities of degree~2 in~$\V$, both $\big(\psi_1(b_1, b'_1, x)-\varphi_1(b_1, b'_1, x)\big)y$ and $y\big(\psi_2(b_1, b'_1, x) - \varphi_1(b_1, b'_1, x)\big)$ have to vanish. The first one may be written as
\begin{multline*}
0 = \big(f_{49} (xb_1)b_1' + f_{50} (b_1x)b_1'+ f_{51} b_1'(b_1x) + f_{52}b_1'(b_1x) \\
{} + f_{53} (xb_1')b_1+ f_{54} (b_1'x)b_1 + f_{55} b_1(xb_1')+ f_{56} b_1(b_1'x)\big)y,
\end{multline*}
where $f_{49}$, \dots, $f_{56}$ belong to $R$. In order for this to hold, we need that
\begin{multline*}
0= f_{49} (xb)b' + f_{50} (bx)b'+ f_{51} b'(bx) + f_{52}b'(bx) \\
+ f_{53} (xb')b+ f_{54} (b'x)b + f_{55} b(xb')+ f_{56} b(b'x)
\end{multline*}
is an identity of $\V$. So in $B\flat (X + Y)$ we have 
\begin{multline*}
0 = f_{49} (bx)y + f_{50} (xb)y+ f_{51} y(xb) + f_{52}y(xb) \\
{} + f_{53} (by)x+ f_{54} (yb)x + f_{55} x(by)+ f_{56} x(yb).
\end{multline*}
Using \LACC\ we see that 
\begin{multline*}
0 = f_{49} (b_1x)y + f_{50} (xb_1)y+ f_{51} y(xb_1) + f_{52}y(xb_1) \\
{} + f_{53} (b_2y)x+ f_{54} (yb_2)x + f_{55} x(b_2y)+ f_{56} x(yb_2)
\end{multline*}
in $B\flat X + B\flat Y$. Since the monomials in this expression are linearly independent, all of the polynomials $f_{49}$, \dots, $f_{56}$ in $R$ need to vanish.

Essentially the same type of reasoning applies to $y\big(\psi_2(b, b', x) - \varphi_1(b, b', x)\big)$, $\big(\psi_3(b, b', y)-\varphi_1(b, b', y)\big)x$ and $x\big(\psi_4(b, b', y)-\varphi_1(b, b', y)\big)$, yielding twenty-four additional polynomials $f_{57}$, \dots, $f_{80} \in R$ that have to be zero---see Appendix~\ref{Appendix}. This ends our analysis of the identities coming from decomposing $(bb')(xy)$, which allowed us to find forty-eight polynomials in $R$ that need to vanish. A similar study can be done, decomposing $(xy)(bb')$ in the same way, and we can obtain forty-eight additional polynomials $f_{81}$, \dots $f_{128} \in R$.

Let $I$ be the ideal of $R=\K[\lambda_1, \dots, \lambda_8, \mu_1, \dots, \mu_8]$ generated by $f_{1}$, \dots, $f_{128}$. A~common root for all these polynomials will be a root for any polynomial of~$I$. When we compute a Gr\"obner basis of $I$ with the computer algebra system \textsc{Singular}~\cite{DGPS}, the system tells us that $1 \in I$. More precisely, it says that we can obtain~$1$ as a linear combination in $\Q[\lambda_1, \dots, \lambda_8, \mu_1, \dots, \mu_8]$ of the polynomials $f_{1}$, \dots, $f_{128}$. As a consequence, $I = \K[\lambda_1, \dots, \lambda_8, \mu_1, \dots, \mu_8]$ for any field $\K$ of characteristic zero, hence the set of common roots of $f_{1}$, \dots, $f_{128}$ is empty. This completes the proof.
\end{proof}

\begin{remark}\label{code}
Here is the code we used to make the computer algebra package \textsc{Singular}~\cite{DGPS} show that the system of equations $(f_i=0)_{1\leq i\leq 128}$ has no solutions.
\begin{Verbatim}[numbers=left]
ring r=0,(x(1..8),y(1..8)),dp;
poly f(1) = y(5)*y(5) + y(6)*y(1) + y(7)*x(5) + y(8)*x(1) - 1;
poly f(2) = y(5)*y(6) + y(6)*y(2) + y(7)*x(6) + y(8)*x(2);
...
poly f(128) = - y(7)*y(8) - y(8)*x(8) + x(5)*y(5)*y(3) 
+ x(6)*x(5)*y(3) + x(7)*y(5)*x(3) + x(8)*x(5)*x(3) + x(5)*y(7)*y(7) 
+ x(6)*x(7)*y(7) + x(7)*y(7)*x(7) + x(8)*x(7)*x(7);
ideal i=f(1..128);
ideal si=std(i);
si;
\end{Verbatim}
The parameter \verb+dp+ in line 1 means that degree reverse lexicographical ordering of polynomials is chosen. The arXiv version of this paper includes an ancillary file containing the whole code we used, as well as an explicit way to write $1$ as a linear combination of the $(f_i)_i$ in $\Q[\lambda_1, \dots, \lambda_8, \mu_1, \dots, \mu_8]$.
\end{remark}

\begin{remark}
We want to highlight the importance of choosing the ``right'' monomial order when performing this calculation. While calculating the Gröbner basis and explicit linear combination for the ideal generated by the polynomials of Appendix~\ref{Appendix} with the degree reverse lexicographical order takes around 45 minutes on a recent laptop, the supercomputer \emph{Finisterrae~II} located in the facilities of the \emph{Centro de Supercomputación de Galicia} (CESGA)~\cite{Finisterrae} took 89 hours to perform the same task with respect to the degree lexicographical order. When the lexicographical order was chosen, it could not even calculate the Gröbner basis after 96 hours of computation.
\end{remark}

\begin{corollary}\label{Corollary subvariety}
Let $\K$ be a field of characteristic zero. A variety of non-associative $\K$-algebras is a variety of Lie algebras as soon as it forms a \LACC\ category.
\end{corollary}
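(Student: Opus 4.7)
The plan is to read this corollary as a direct combination of the two main results of Sections~\ref{Section Commutative} and~\ref{Section General Case}, with no additional work required. Specifically, I would argue by contrapositive on Theorem~\ref{Theorem degree 3} and then invoke Corollary~\ref{Corollary degree 2}.

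First I would suppose that $\V$ is a variety of non-associative $\K$-algebras forming a \LACC\ category, with $\kar(\K)=0$. Theorem~\ref{Theorem degree 3} states that if $\V$ satisfies no non-trivial identity of degree~$2$, then $\V$ cannot be \LACC. Contrapositively, our $\V$ must satisfy at least one non-trivial identity of degree~$2$; here I use the characteristic zero hypothesis to meet the hypothesis of Theorem~\ref{Theorem degree 3}.

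Next I would apply Corollary~\ref{Corollary degree 2}, whose hypotheses (infinite field, characteristic different from~$2$, variety satisfying a non-trivial degree~$2$ identity and forming a \LACC\ category) are all met: $\kar(\K)=0$ implies both that $\K$ is infinite and that $\kar(\K)\neq 2$, and the existence of the non-trivial degree~$2$ identity is what we just extracted. The conclusion is exactly that $\V$ is a subvariety of $\Liek$, which is what it means for $\V$ to be a variety of Lie algebras.

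There is no real obstacle: the conceptual work has already been done in Theorem~\ref{Theorem degree 3} (where the \textsc{Singular} computation rules out the degree~$\geq 3$ scenario) and in Corollary~\ref{Corollary degree 2} (where the degree~$2$ identities are classified up to a \LACC\ constraint). The proof of the corollary is simply the observation that these two results together cover every possible degree of the lowest non-trivial identity in $\V$, and only the degree~$2$ case is compatible with \LACC, forcing $\V\subseteq \Liek$.
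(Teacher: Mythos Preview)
Your proposal is correct and matches the paper's own proof, which is the one-line observation that Corollary~\ref{Corollary degree 2} and Theorem~\ref{Theorem degree 3} together cover all possibilities. You have simply unpacked this in more detail, making explicit the contrapositive use of Theorem~\ref{Theorem degree 3} and verifying that the characteristic-zero hypothesis ensures the assumptions of Corollary~\ref{Corollary degree 2} are met.
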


\begin{proof}
Corollary~\ref{Corollary degree 2} and Theorem~\ref{Theorem degree 3} cover all the possibilities.
\end{proof}

\section{Fields of prime characteristic}\label{Section Prime Characteristic}

We now extend the reasoning of Theorem~\ref{Theorem degree 3} to algebras over a field of prime characteristic. First of all, note that for Theorem~\ref{Theorem Homogeneity} to work, we need to restrict ourselves to infinite fields. Next, we need to be careful with the rational coefficients that appear in the Gr\"obner basis of the ideal $I$ in the proof of Theorem~\ref{Theorem degree 3}. Indeed, the characteristic of the field could divide one of the denominators in those coefficients. 

\begin{theorem}\label{Theorem degree 3 prime}
Let $\K$ be an infinite field of prime characteristic. If a variety of non-associative $\K$-algebras does not satisfy any non-trivial identities of degree~$2$, then it cannot be a \LACC\ category.
\end{theorem}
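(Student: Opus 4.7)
The plan is to mimic the proof of Theorem~\ref{Theorem degree 3} as closely as possible. Theorem~\ref{Theorem Homogeneity} is available because $\K$ is infinite by assumption, and Theorem~\ref{Theorem previous} continues to apply, so again we obtain scalars $\lambda_{1}, \dots, \lambda_{8}, \mu_{1}, \dots, \mu_{8}\in\K$, and the same decomposition arguments produce the identical list of $128$ polynomials $f_{1}, \dots, f_{128}$ in the ring $R=\K[\lambda_{1}, \dots, \lambda_{8}, \mu_{1}, \dots, \mu_{8}]$, all of which must vanish at $(\lambda_{1}, \dots, \mu_{8})$. Up to this point nothing has changed, since each step relied only on Theorem~\ref{Theorem Homogeneity}, Theorem~\ref{Theorem previous} and the absence of nontrivial degree-two identities in $\V$.

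The only place the characteristic enters the argument of Theorem~\ref{Theorem degree 3} is in the final step, where a Gr\"obner basis computation in $\Q[\lambda_{1}, \dots, \mu_{8}]$ exhibits $1$ as a $\Q$-linear combination of $f_{1}, \dots, f_{128}$. The coefficients of this combination are rational, and their denominators involve only finitely many primes; call this finite set $P$. For every prime $p\notin P$, reducing the same combination modulo $p$ still expresses $1$ as an $\mathbb{F}_{p}$-linear combination of $f_{1}, \dots, f_{128}$, and this representation lifts unchanged to any infinite field $\K$ of characteristic~$p$. Thus for all but finitely many prime characteristics the conclusion is a free consequence of the computation already performed in characteristic zero.

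For each of the remaining primes $p\in P$, I would rerun the code of Remark~\ref{code} after replacing the declaration \verb+ring r=0,...+ by \verb+ring r=p,...+, so that \textsc{Singular} works directly in $\mathbb{F}_{p}[\lambda_{1}, \dots, \mu_{8}]$. If each of these finitely many runs again returns that $1$ lies in the ideal $I$, then the set of common zeros of $f_{1}, \dots, f_{128}$ is empty over every infinite field of prime characteristic, and the theorem follows exactly as in characteristic zero. To keep the computations feasible, one would retain the degree reverse lexicographical ordering that succeeded in characteristic zero.

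The step I expect to be the main obstacle is precisely this finite check at the exceptional primes: a priori it is conceivable that for some $p\in P$ the ideal generated by $f_{1}, \dots, f_{128}$ in $\mathbb{F}_{p}[\lambda_{1}, \dots, \mu_{8}]$ is proper, which would signal a genuinely new family of \LACC\ varieties in characteristic~$p$ and force a separate analysis in place of the reduction to Theorem~\ref{Theorem degree 3}. Inspection of the denominators appearing in the characteristic-zero combination is therefore the first concrete task, since it both determines~$P$ and bounds the work left to do.
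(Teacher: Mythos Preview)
Your approach is essentially the same as the paper's, and it is correct as far as it goes; but you have misidentified the main obstacle. The danger at the exceptional primes is not that the ideal might turn out to be proper, it is that the primes themselves are enormous. When you clear denominators in the characteristic-zero combination produced by Remark~\ref{code}, the resulting integer $m$ has prime factors such as $45682348205218520398213951$ and $18481555969123738547989$. Asking \textsc{Singular} to recompute the Gr\"obner basis over $\mathbb{F}_{p}$ for such $p$ is not realistic: the package does not cope with characteristics of that size, so your proposed loop over $p\in P$ stalls long before it can finish.

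The paper's fix is exactly what you need to add. Rather than checking every prime dividing one integer~$m$, run the Gr\"obner computation a second time with a slightly different monomial order (the paper simply swaps two variables), obtaining a second integer~$m'$. Only the primes dividing $\gcd(m,m')$ remain to be checked, and in the paper's case this intersection collapses to $\{2,3,7\}$, which are small enough for the direct \textsc{Singular} check you describe. With that refinement your argument coincides with the published proof.
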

\begin{proof}
In the proof of Theorem~\ref{Theorem degree 3}, instead of writing $1$ as a linear combination of the $(f_i)_{1\leq i\leq 128}$ in $\Q[\lambda_1, \dots, \lambda_8, \mu_1, \dots, \mu_8]$, we may find some $m\in \N$ as a linear combination of the $(f_i)_i$ in $\Z[\lambda_1, \dots, \lambda_8, \mu_1, \dots, \mu_8]$. In order to prove the theorem for all prime characteristics, we just have to check that the system of equations
\[
(f_i=0)_{1\leq i \leq 128}
\]
is still inconsistent for all infinite fields whose characteristic divides the natural number~$m$. 

Checking the result for all prime divisors of $m$ is not realistic, because those characteristics are too large for the computer algebra package we used. One way to proceed here is to choose a different monomial order when calculating a Gr\"obner basis. This gives us a way to write $1$ as a different linear combination of the $(f_i)_{1\leq i\leq 128}$, and thus obtain a new natural number $m'$. The problem is now reduced to a practical size by checking the result for the common prime roots of $m$ and $m'$.

Concretely, the number $m$ which arises in the proof of Theorem~\ref{Theorem degree 3} from the code in Remark~\ref{code} is $
m=\seqsplit{8702366371611141098671784574059986886835524773056180191922975621327712489528654597642065042289512}$, which factors into primes as 
\begin{align*}
m= 2^3 &\times 3^2\times 7\times 1049 \times 14479\times 12133021861\times 16113739806343\\
&\times 6887165068164869\times 18481555969123738547989\\
&\times 45682348205218520398213951.
\end{align*}
A different monomial order may be chosen in line 1 of the code in Remark~\ref{code}: for instance, swapping $\mu_7$ and $\mu_8$ is expressed as 
\begin{Verbatim}[numbers=left]
ring r=0,(x(1..8),y(1..6),y(8),y(7)),dp;
\end{Verbatim}
and leads to the number $
m'=\seqsplit{2594280368805968297556964255364362028715956773862814343963891594296990709562550944826763410}$, which factors into primes as 
\begin{align*}
m'= 2&\times 3\times 5\times 7\times 13\times 67\times 8878743659183\times 980432662345198665679\\ &\times 1629334190706617301312819947580437423912985358007543.
\end{align*}
Clearly, only $2$, $3$ and $7$ remain as prime characteristics to be checked by hand, which can easily be done in \textsc{Singular} by using
\begin{Verbatim}[numbers=left]
ring r=2,(x(1..8),y(1..8)),dp;
\end{Verbatim}
and its variations in line 1 of the code in Remark~\ref{code}.
\end{proof}

\begin{corollary}\label{Corollary subvariety prime}
Let $\K$ an infinite field of prime characteristic. Any variety of non-associative $\K$-algebras which is a \LACC\ category is a variety of quasi-Lie algebras.\noproof
\end{corollary}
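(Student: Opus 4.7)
The plan is to follow exactly the same pattern as in the proof of Corollary~\ref{Corollary subvariety}, combining the degree-$2$ dichotomy with the prime-characteristic results already established earlier in the paper. Let $\V$ be a variety of non-associative $\K$-algebras which is a \LACC\ category, where $\K$ is an infinite field of prime characteristic. First I would split into two cases depending on whether $\V$ satisfies a non-trivial identity of degree~$2$ or not.

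If $\V$ satisfies no non-trivial degree-$2$ identity, then Theorem~\ref{Theorem degree 3 prime} applies directly and tells us that $\V$ cannot be a \LACC\ category, contradicting the hypothesis. Hence this case is vacuous and can be discarded.

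In the remaining case, where $\V$ does satisfy a non-trivial identity of degree~$2$, I would further distinguish according to $\kar(\K)$. When $\kar(\K)\neq 2$, Corollary~\ref{Corollary degree 2} shows that $\V$ is a subvariety of $\Liek$; as the paper already observes in the preliminaries, in any characteristic different from $2$ the identity $xy=-yx$ forces $2xx=0$ and hence $xx=0$, so $\Liek=\qLiek$ and consequently $\V\subseteq\qLiek$. When $\kar(\K)=2$, Corollary~\ref{Corollary qLie} applies directly and yields $\V\subseteq\qLiek$. In either subcase $\V$ is a variety of quasi-Lie algebras, which is the conclusion required.

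There is no genuine obstacle here: the heavy lifting has already been carried out in Corollary~\ref{Corollary degree 2}, Corollary~\ref{Corollary qLie} and Theorem~\ref{Theorem degree 3 prime}, and the present corollary is just a clean packaging of the prime-characteristic analogues of the ingredients used to prove Corollary~\ref{Corollary subvariety}. The only minor point worth emphasising in the write-up is the identification $\Liek=\qLiek$ in odd characteristic, which is what allows a uniform statement in terms of $\qLiek$ across all prime characteristics.
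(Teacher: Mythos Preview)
Your proposal is correct and follows exactly the pattern the paper intends: the corollary is stated with \verb|\noproof| because it is the immediate prime-characteristic analogue of Corollary~\ref{Corollary subvariety}, obtained by replacing Theorem~\ref{Theorem degree 3} with Theorem~\ref{Theorem degree 3 prime} and, in the degree-$2$ case, invoking Corollary~\ref{Corollary degree 2} or Corollary~\ref{Corollary qLie} according to whether $\kar(\K)\neq 2$ or $\kar(\K)=2$. Your remark that $\Liek=\qLiek$ when $\kar(\K)\neq 2$ is precisely what lets the conclusion be phrased uniformly in terms of $\qLiek$.
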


\section{Subvarieties of \texorpdfstring{$\qLiek$}{qLieK}}\label{Section Subvarieties}
We continue working towards the main result of this article, now showing that the only varieties of quasi-Lie algebras over an infinite field which form non-abelian \LACC\ categories are~$\Liek$ and $\qLiek$.

\begin{proposition}\label{Proposition Lie}
Let $\K$ be an infinite field. Let $\V$ be a subvariety of either $\Liek$ or $\qLie_K$ determined by a collection of equations of degree $3$ or higher. If $\V$ is a \LACC\ category, then it is an abelian category.
\end{proposition}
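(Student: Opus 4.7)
The plan is to argue by contrapositive: suppose $\V$ is a non-abelian \LACC\ subvariety of $\Liek$ or $\qLiek$ satisfying some non-trivial identity of degree at least $3$, and derive a contradiction. By Theorem~\ref{Theorem Linearity General}, $\V$ admits a non-trivial multilinear identity $P(x_1,\dots,x_n)=0$ of some minimal degree $n\geq 3$. The strategy is to reduce, via \LACC\ and Theorem~\ref{Theorem Homogeneity}, to the case in which $\V$ satisfies the degree-$3$ identity $[[u,v],w]=0$, and then to apply \LACC\ once more to force abelianness, using that any non-trivial degree-$2$ identity in a subvariety of $\Liek$ or $\qLiek$ is equivalent to $[a,b]=0$.

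For the base case $n=3$, any multilinear degree-$3$ element of the free Lie algebra on $\{x_1,x_2,x_3\}$ may be written as $\alpha[[x_1,x_2],x_3]+\beta[[x_1,x_3],x_2]$ modulo Jacobi and antisymmetry. Substituting $x_1=b$, $x_2=x$, $x_3=y$ gives the identity $\alpha[[b,x],y]+\beta[[b,y],x]=0$ in $B\flat(X+Y)$; its natural preimage $\alpha[[b_1,x],y]+\beta[[b_2,y],x]$ in $B\flat X+B\flat Y$ must then vanish by \LACC. The two summands have different types in the free $\V$-algebra on $\{b_1,b'_1,x,b_2,b'_2,y\}$ into which the coproduct maps, so Theorem~\ref{Theorem Homogeneity} forces each to vanish separately, and the nontriviality of $P$ forces $[[u,v],w]=0$ in $\V$. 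Assuming $\V$ is now $2$-step nilpotent, consider $\xi=[[b_1,x],[b_2,y]]\in B\flat X + B\flat Y$: it is the bracket, in the coproduct, of one of the natural generators of $B\flat X$ with one of the natural generators of $B\flat Y$. Its image under the \LACC\ comparison map is $[[b,x],[b,y]]\in B\flat(X+Y)$, a fourfold bracket that vanishes in any $2$-step nilpotent algebra; so \LACC\ forces $\xi=0$. But in a $2$-step nilpotent variety the algebras $B\flat X$ and $B\flat Y$ are abelian, and the bracket of two independent generators in the coproduct of two abelian $\V$-algebras is zero only when $\V$ itself satisfies a non-trivial degree-$2$ identity, i.e.\ $[a,b]=0$. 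This would make $\V$ abelian, contradicting the standing assumption.

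For $n\geq 4$, the plan is an analogous substitution-and-decompose argument, this time using the iterated form of \LACC, $B\flat(Y_1+\dots+Y_{n-1})\cong B\flat Y_1 + \dots + B\flat Y_{n-1}$, applied to the identity $P(b,y_1,\dots,y_{n-1})=0$. Rewriting each monomial of $P$ via Jacobi so that $b$ sits in an innermost bracket with some $y_j$, Theorem~\ref{Theorem Homogeneity} separates the lifted expression into $n-1$ type components, one per choice of $j$, each giving a multilinear identity of $\V$ in the variables $\{b,b',y_1,\dots,y_{n-1}\}$. The minimality of $n$ is then combined with repeated application of the same decomposition to force these identities to collapse to a degree-$3$ identity, reducing to the base case. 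The main obstacle is this last step: making precise that the combinatorics of the type decomposition, together with a Hall (or Lyndon) basis expansion of $P$, really do collapse a higher-degree minimal identity down to degree $3$. Both the base case analysis and the passage from $2$-step nilpotency to abelianness via the element $\xi$ are straightforward applications of \LACC\ and Theorem~\ref{Theorem Homogeneity}.
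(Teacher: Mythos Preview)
Your overall plan---contrapositive, take a multilinear identity of minimal degree $n\geq 3$, rewrite via Jacobi so that $b$ sits in an innermost bracket, then use \LACC\ together with Theorem~\ref{Theorem Homogeneity} to separate---is exactly the paper's strategy. Your base case $n=3$ is in fact more explicit than the paper's terse treatment: you correctly deduce that a nontrivial degree-$3$ identity forces $[[u,v],w]=0$, and your element $\xi=[[b_1,x],[b_2,y]]$ gives a clean, concrete way to rule out the $2$-step nilpotent case. One caution on phrasing: the natural map from $B\flat X+B\flat Y$ to the free $\V$-algebra on $\{b_1,b_1',x,b_2,b_2',y\}$ is \emph{not} injective in general (it already fails in the $2$-step nilpotent case), so speaking of the free algebra ``into which the coproduct maps'' is fine, but treating it as an embedding would be wrong. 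Your argument survives because you only push the equation forward along this map.

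The genuine gap is the step $n\geq 4$, which you yourself flag. The paper does \emph{not} try to ``collapse a higher-degree minimal identity down to degree~$3$''. Instead, having separated to obtain
\[
\varphi_i(x_1,\dots,b_ix_i,\dots,x_n)=0
\]
in the coproduct $B\flat X_1+\cdots+B\flat X_n$, the paper regards this as a degree-$n$ relation among the $n$ elements $x_1,\dots,b_ix_i,\dots,x_n$, one from each summand, and concludes directly that it must already be a consequence of Jacobi and anticommutativity (since by the inductive hypothesis $\V$ has no nontrivial identities of degree at most $n$). Each $\varphi_i$ is therefore trivial as a (quasi-)Lie polynomial, hence so is $\psi=\sum_i\lambda_i\varphi_i(\dots,bx_i,\dots)$, contradicting its assumed nontriviality. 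This single descent from degree $n+1$ to degree $n$ is what closes the induction; your proposed iterated collapse via Hall/Lyndon combinatorics is neither needed nor clearly workable as stated. The point you are missing is that elements chosen one-per-summand in an $n$-fold coproduct behave like free generators for the purpose of detecting multilinear identities, so that $\varphi_i(\dots,b_ix_i,\dots)=0$ in the coproduct already forces $\varphi_i(y_1,\dots,y_n)=0$ to be an identity of $\V$.
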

\begin{proof}
Let $\V$ be a variety of non-associative $\K$-algebras which forms a non-abelian \LACC\ category. Then Corollary~\ref{Corollary subvariety} and Corollary~\ref{Corollary subvariety prime} together tell us that $\V$ is a subvariety of $\qLiek$. Using induction on the degree, we shall prove that~$\V$ does not admit any non-trivial identities, besides those obtained from the Jacobi identity and either $xx=0$ or $xy=-yx$. The results of Section~\ref{Section Commutative} already show that there are no additional homogeneous identities of degree $2$.

Let us now suppose that $\V$ satisfies a homogeneous identity of degree $3$. By Theorem~\ref{Theorem Linearity General} this identity may be written in the form
\[
b(xy)=\lambda (bx)y+\mu x(by).
\]
However, by the Jacobi identity we then have that 
\[
0=(\lambda-1)(bx)y+(\mu-1)x(by).
\]
Letting $B$, $X$ and $Y$ be free algebras as above, it is clear from Theorem~\ref{Theorem Homogeneity} that $\lambda =\mu=1$. We conclude that each algebra in $\V$ has a zero product---a contradiction with the assumption that $\V$ is a non-abelian category. Hence there are no additional homogeneous identities of degree $3$.

Let us next suppose that $\V$ satisfies a homogeneous identity of degree $n+1$, where $n\geq 3$, while all homogeneous identities of degree at most $n$ are induced by the Jacobi identity and either $xx=0$ or $xy=-yx$. By Theorem~\ref{Theorem Linearity General} we may assume that this identity is multilinear. We write it as 
\[
0=\psi(b,x_1,\dots,x_n),
\]
so that the polynomial on the right may be considered as an element of
\[
B+X_1+\cdots+X_n
\]
where $B$ is as above (the generator $b$ written as $b_i$ when several copies of $B$ are considered) and $X_i$ is the free algebra on a single generator $x_i$. Since $\V$ is a variety of quasi-Lie algebras, we may use anticommutativity and the Jacobi identity to rewrite the identity $0=\psi(b,x_1,\dots,x_n)$ in the shape
\[
0=\sum_{i=1}^n\lambda_i\varphi_i(x_1,\dots,bx_i,\dots,x_n)
\]
where $\varphi_i(x_1,\dots,b_ix_i,\dots,x_n)$ is in $B\flat X_1+\cdots+B\flat X_n$. By the assumption that the category $\V$ is \LACC, the canonical comparison
\[
B\flat X_1+\cdots+B\flat X_n\to B\flat (X_1+\cdots+X_n)
\]
is an isomorphism. As a consequence, we have that 
\[
0=\sum_{i=1}^n\lambda_i\varphi_i(x_1,\dots,b_ix_i,\dots,x_n)
\]
in $B\flat X_1+\cdots+B\flat X_n$. Theorem~\ref{Theorem Homogeneity} then implies that $\varphi_i(x_1,\dots,b_ix_i,\dots,x_n)=0$ for each $i\in\{1,\dots,n\}$. These, however, are equations in $B\flat X_1+\cdots+B\flat X_n$, hence in $B+ X_1+\cdots+B+X_n$, of a degree at most $n$, so that they must be induced by the Jacobi identity and either $xx=0$ or $xy=-yx$. 
\end{proof}

\begin{remark}
Although Theorem~\ref{Theorem Linearity General} is only stated for $\Liek$, it easy to verify that it also holds for $\qLiek$ when $\kar (\K) = 2$. The free algebra in $\qLiek$ is equal to the free algebra in $\Liek$ with the extra linearly independent components formed by the repetition of elements $xx$, but they do not play any role since
\[
y(xx) = (yx)x + x(yx) = (yx)x + (yx)x = 0.
\]
\end{remark}

\begin{theorem}\label{Theorem Lie}
Let $\K$ be an infinite field. The only varieties of non-associative $\K$-algebras that form non-abelian \LACC\ categories are
\begin{enumerate}
\item the variety $\Liek$, when $\kar(\K)\neq 2$;
\item the varieties $\Liek$ and $\qLiek$, when $\kar(\K)=2$.
\end{enumerate}
\end{theorem}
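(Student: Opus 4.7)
The plan is to assemble the statement from pieces already established. Suppose $\V$ is a variety of non-associative $\K$-algebras forming a non-abelian \LACC\ category. Corollary~\ref{Corollary subvariety} together with Corollary~\ref{Corollary subvariety prime} tells us at once that $\V$ is a subvariety of $\qLiek$; when $\kar(\K)\neq 2$ we additionally have $\Liek=\qLiek$, so in that case $\V\subseteq\Liek$. The remaining task is to show that $\V$ must coincide with $\Liek$ or with $\qLiek$, and to confirm that both indeed carry non-abelian \LACC\ structure.

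For the forward direction, I would analyse how $\V$ sits inside $\qLiek$. When $\kar(\K)\neq 2$, Corollary~\ref{Corollary degree 2} rules out every non-trivial degree-$2$ identity compatible with non-abelianness apart from $xy+yx=0$, which is already a defining identity of $\Liek$; consequently any further defining identity of $\V$ on top of those of $\Liek$ must be of degree $\geq 3$. Proposition~\ref{Proposition Lie} then forces $\V$ to be abelian, contradicting the hypothesis, and hence $\V=\Liek$. When $\kar(\K)=2$ the identities $xx=0$ and $xy+yx=0$ split, so I would separate into two subcases. If $\V$ satisfies $xx=0$, then $\V\subseteq\Liek$ and the same argument as above gives $\V=\Liek$; if $\V$ does not satisfy $xx=0$, then every further defining identity of $\V$ on top of those of $\qLiek$ must be of degree $\geq 3$ (since adding $xx=0$ is excluded by assumption, and Corollary~\ref{Corollary qLie} eliminates any other non-trivial degree-$2$ alternative compatible with non-abelianness), so Proposition~\ref{Proposition Lie} forces $\V=\qLiek$.

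The converse direction is already in the literature and in the paper: Gray's theorem in~\cite{GrayLie} shows that $\Liek$ is a \LACC\ category over any commutative ring with unit, and Theorem~\ref{Theorem JJAlg char 2} establishes the same for $\qLiek$ in characteristic $2$; both are manifestly non-abelian. The only subtle point, and the one I would watch most carefully, is the clean separation of degree-$2$ identities in characteristic $2$, so that the hypothesis of Proposition~\ref{Proposition Lie}---a proper subvariety defined by additional equations of degree $3$ or higher---is genuinely met in each subcase. Beyond this bookkeeping, the proof is a short assembly of the results established in Sections~\ref{Section Commutative}--\ref{Section Subvarieties}.
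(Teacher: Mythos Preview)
Your proposal is correct and follows essentially the same route as the paper's proof, which is a one-line combination of Proposition~\ref{Proposition Lie}, Corollary~\ref{Corollary subvariety}, Corollary~\ref{Corollary subvariety prime}, and Theorem~\ref{Theorem JJAlg char 2}. Your extra bookkeeping on degree-$2$ identities (via Corollary~\ref{Corollary degree 2} and Corollary~\ref{Corollary qLie}) and the explicit characteristic-$2$ case split are sound but largely redundant: the proof of Proposition~\ref{Proposition Lie} already absorbs the degree-$2$ analysis internally by appealing to the results of Section~\ref{Section Commutative}, so once $\V\subseteq\qLiek$ is known, Proposition~\ref{Proposition Lie} directly yields $\V=\Liek$ or $\V=\qLiek$ without needing to verify the ``degree~$\geq 3$'' hypothesis separately.
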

\begin{proof}
This is a combination of Proposition~\ref{Proposition Lie} with Corollary~\ref{Corollary subvariety}, Corollary~\ref{Corollary subvariety prime} and Theorem~\ref{Theorem JJAlg char 2}.
\end{proof}

\section{About \texorpdfstring{$n$}{n}-ary operations}\label{Section n-Ary}

Given any natural number $n$, an \defn{$n$-algebra $A$ over $\K$} is a $\K$-vector space equipped with an $n$-linear operation ${A^n\to A}$, so a linear map ${A^{\tensor n}\to A}$. We write $\nAlg_{\K}$ for the category of $n$-algebras over~$\K$, with linear maps between them that preserve the operation. A \defn{variety of $n$-algebras} is any equationally defined class of such, considered as a full subcategory~$\V$ of~$\nAlg_{\K}$. 

We now study what happens when instead of considering a variety of non-associative algebras (=~$2$-algebras) we take a variety of $n$-algebras for arbitrary~$n$. When $n=0$ or $n=1$, this is not very interesting:

\begin{proposition}\label{Proposition Nullary Operation}
A variety of $0$-algebras is always a \LACC\ category, but never an abelian or even a semi-abelian category.
\end{proposition}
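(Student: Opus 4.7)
The plan is to describe varieties of $0$-algebras explicitly and then verify each claim. A $0$-algebra is a pair $(V, c_V)$, where $V$ is a $\K$-vector space and $c_V$ is the image of $1 \in \K = V^{\tensor 0}$ under the $0$-ary operation. Every term in the signature of $0$-algebras is a $\K$-linear combination of the constant $c$ and so reduces to a scalar multiple $\lambda c$, which means that the only non-trivial identity one can impose is $c = 0$. Up to categorical equivalence, there are therefore only two varieties of $0$-algebras: the full variety $\V$ of pointed $\K$-vector spaces, and the degenerate subvariety defined by $c = 0$, which is just $\Vect_{\K}$. The statement of the proposition concerns the former.

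To see that $\V$ is not semi-abelian, I would identify its initial and terminal objects. The free $0$-algebra on the empty set is $(\K, 1)$; it is initial because a morphism out of it is determined by the requirement that $1$ be sent to the distinguished element of the codomain. The terminal object is the zero vector space $(0, 0)$. Since $(\K, 1) \not\cong (0, 0)$, the category fails to be pointed and hence is neither semi-abelian nor abelian.

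For the \LACC\ property I would fall back on the original formulation of Gray and Bourn--Gray via the fibration of points, since $\V$ has no zero morphisms and the $B \flat (-)$ characterisation does not directly apply. For a given base $(V, v)$, I would show that every point $f \colon (A, a) \to (V, v)$ with section $s \colon (V, v) \to (A, a)$ decomposes as $A \cong V \oplus K$ with $K = \ker f$, under which $s(v') = (v', 0)$ and $a = (v, k)$ for a unique $k \in K$; the assignment $(A, a, f, s) \mapsto (K, k)$ then yields an equivalence $\mathsf{Pt}_{(V, v)}(\V) \simeq \V$. Next I would check that pulling back a point along $p \colon (E, e) \to (V, v)$ preserves both $K$ and $k$, so that under these equivalences the change-of-base functor $p^*$ becomes the identity on $\V$. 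Since the identity is its own right adjoint, $p^*$ has a right adjoint, and this is \LACC.

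The main obstacle is the bookkeeping in the fibre equivalence---in particular checking that it is natural in $(V, v)$ so that pullback really does unfold to the identity functor on $\V$. Once this is in place, both parts of the proposition follow immediately.
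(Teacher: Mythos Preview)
Your argument is correct and reaches the same conclusions, but the route to the \LACC\ part is genuinely different from the paper's.

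The paper's proof is a one-liner: it observes that $\text{$0$-}\Alg_\K\cong(\K\downarrow \Vect_\K)$, the coslice of vector spaces under $\K$, and then invokes Lemma~2.3 of Bourn--Gray~\cite{Bourn-Gray}, which says that a coslice of a \LACC\ category is again \LACC. The non-pointedness of this coslice then rules out semi-abelianness. Your identification of the objects as pointed vector spaces is of course the same as the coslice description, and your argument for non-pointedness via $(\K,1)\not\cong(0,0)$ is precisely what underlies the paper's ``not pointed'' remark.

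For \LACC\ you instead unfold the fibration of points directly: each split epi over $(V,v)$ decomposes as $V\oplus K$ with distinguished element $(v,k)$, giving $\mathsf{Pt}_{(V,v)}(\V)\simeq\V$, and you check that change of base preserves both $K$ and $k$ so that $p^*$ is (naturally isomorphic to) the identity. This is a valid and self-contained verification; its advantage is that it does not appeal to an external lemma, while the paper's approach is shorter and situates the result as an instance of a general stability property of \LACC\ under coslicing. Your acknowledged ``bookkeeping'' is genuinely routine: the key point, that the kernel of the pulled-back split epi together with its distinguished element coincides with $(K,k)$, is an elementary computation in $A\times_V E$.

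One remark: your classification of the possible varieties is sharper than what the paper states. The proposition, as phrased, does not single out the full variety, yet the degenerate subvariety defined by $c=0$ is equivalent to $\Vect_\K$ and is therefore abelian, contradicting the ``never'' clause. The paper's own proof treats only the full category $\text{$0$-}\Alg_\K$, so your reading that ``the statement concerns the former'' agrees with the intent, and your observation about the degenerate case is a fair caveat.
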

\begin{proof}
If $n=0$, then $\nAlg_\K\cong(\K\downarrow \Vect_\K)$ because $A^{\tensor 0}=\K$: objects are linear maps $\K\to A$, morphisms are commutative triangles between those. By Lemma~2.3 in~\cite{Bourn-Gray}, this coslice category of $\Vect_\K$ is still \LACC, even though it is no longer a (semi-)abelian category, since it is not pointed. 
\end{proof}

\begin{proposition}\label{Proposition Unary Operation}
Any variety of $1$-algebras is an abelian category.
\end{proposition}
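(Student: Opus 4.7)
The plan is to identify any 1-algebra over $\K$ with a module over the polynomial ring $\K[t]$. An object of $1$-$\mathsf{Alg}_\K$ is by definition a pair $(A,f)$ where $A$ is a $\K$-vector space and $f\colon A\to A$ is a $\K$-linear map; letting $t$ act as $f$ gives an equivalence of categories between $1$-$\mathsf{Alg}_\K$ and $\K[t]\text{-}\mathsf{Mod}$, and a morphism of $1$-algebras is precisely a $\K[t]$-module homomorphism.

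Next I would unpack what it means for a variety $\V$ to be defined by identities inside $1$-$\mathsf{Alg}_\K$. Because the only operation is linear, any element of the free $1$-algebra on generators $x_1,\dots,x_n$ can be written as a $\K$-linear combination
\[
\sum_{i=1}^n p_i(f)(x_i)
\]
for some polynomials $p_1,\dots,p_n\in\K[t]$. Demanding that such an expression vanish for all substitutions of the $x_i$ forces $p_i(f)=0$ for each $i$ separately: substituting $0$ for every variable except $x_i$---legitimate, since $0$ belongs to every algebra in the variety---reduces the equation to $p_i(f)(x_i)=0$ for arbitrary $x_i$, i.e.\ to the operator identity $p_i(f)=0$. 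Thus every identity imposed on $\V$ is equivalent to a collection of equations of the form $p(f)=0$ with $p\in\K[t]$.

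Such a collection generates an ideal $I\subseteq\K[t]$, and a pair $(A,f)$ satisfies all of the resulting equations exactly when $I$ annihilates the corresponding $\K[t]$-module. It follows that $\V$ is equivalent, as a category, to $(\K[t]/I)\text{-}\mathsf{Mod}$, and I would conclude by invoking the classical fact that the category of modules over any unital associative ring is abelian.

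The only point requiring some care is the separation of a general identity into single-variable operator equations, which relies on the substitution trick above. Apart from that, the argument is essentially a translation exercise, and the abelian conclusion comes for free from the well-known structure of module categories; no heavy machinery or computation is needed.
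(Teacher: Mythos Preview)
Your argument is correct. The identification of $1$-$\mathsf{Alg}_\K$ with $\K[t]$-modules is standard, and your reduction of an arbitrary identity to a family of single-variable operator equations $p(f)=0$ via substitution of zeros is sound; this indeed shows that every subvariety is precisely the category of modules over a quotient $\K[t]/I$, hence abelian.

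The paper takes a closely related but slightly more abstract route: it identifies $1$-$\mathsf{Alg}_\K$ with the functor category $\Fun(\N,\Vect_\K)$, where $\N$ is the additive monoid of natural numbers viewed as a one-object category, and concludes abelianness from the general fact that presheaf categories valued in an abelian category are abelian. For subvarieties it simply invokes, without unpacking, the principle that any subvariety of a variety which happens to be abelian is again abelian. The two identifications are of course equivalent, since $\K[t]$ is the monoid algebra of $(\N,+)$. What your approach buys is an explicit classification of all varieties of $1$-algebras as module categories over principal quotients of $\K[t]$, making the argument entirely self-contained; what the paper's approach buys is brevity and a formulation that would generalise immediately to, say, vector spaces equipped with several commuting endomorphisms.
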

\begin{proof}
If $n=1$, then an object in $\nAlg_\K$ is a vector space $A$ equipped with an endomorphism $t_A\colon{A\to A}$, and a morphism $f\colon (A,t_A)\to (B,t_B)$ is a linear map $f\colon A\to B$ such that $f\comp t_A=t_B\comp f$. Hence $\nAlg_\K\cong\Fun(\N,\Vect_\K)$, the category of functors from the monoid of natural numbers (considered as a one-object category) to $\Vect_\K$. Here a functor $T\colon{\N\to \Vect_\K}$ sends the unique object $*$ of $\N$ to a vector space $A=T(*)$, sends $0$ to $1_A$, $1$ to an endomorphism $t_A\colon{A\to A}$, and $m\geq 2$ to $t^m_A\colon{A\to A}$. As a consequence, $\nAlg_\K$ is an abelian category, being a category of presheaves over $\Vect_\K$. The claim now follows, since any subvariety of a variety of algebras which is an abelian category is again an abelian category.
\end{proof}

When $n\geq 3$, the main obstruction we encounter is the apparent absence in the literature of an analogue of Theorem~\ref{Theorem Homogeneity} for $n$-ary operations. Without entering in too much detail, the idea of the proof for $n=2$ is that if we have an identity $f = f_0 + \cdots + f_k$ with $f_j$ being the sum of the monomials where $x_1$ has degree $j$, then both $\lambda^j f(x_1, \dots, x_r)=0$ and $f(\lambda x_1, \dots, x_r)=0$ are identities, so, if we do not run out of variables $\lambda$ to choose from---which cannot happen when $\K$ is infinite, then $\K$-linearity will imply the result. Therefore, extending Theorem~\ref{Theorem Homogeneity} to $n$-ary operations is a straightforward exercise. This gives us:

\begin{theorem}\label{Theorem n-Ary}
Let $\K$ be an infinite field and let $\V$ be a variety of $n$-algebras where $n \geq 3$. Then the comparison map
\[
B\flat X_1+\cdots+B\flat X_{n-1}\to B\flat (X_1+\cdots+X_{n-1})
\]
is surjective only when $\V$ is an abelian category.
\end{theorem}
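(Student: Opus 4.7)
The strategy is to exhibit an element of $B\flat(X_1+\cdots+X_{n-1})$ that cannot lie in the image of the comparison map unless every $n$-ary operation of $\V$ is trivial. I take $B$ to be the free $\V$-algebra on a single generator $b$ and each $X_i$ to be the free $\V$-algebra on $x_i$; the target element is
\[
\eta=t(x_1,x_2,\dots,x_{n-1},b)\in B\flat(X_1+\cdots+X_{n-1}),
\]
where $t$ denotes the basic $n$-ary operation. Viewed inside the free $\V$-algebra on $\{b,x_1,\dots,x_{n-1}\}$, $\eta$ is multilinear of type $(1,\dots,1,1)$ in the listed variables. I will also use the $n$-ary analogue of Theorem~\ref{Theorem Homogeneity}, obtained by the straightforward extension sketched in the paragraph just before the present theorem.

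The key step is a pigeonhole analysis of $S = B\flat X_1 + \cdots + B\flat X_{n-1}$, viewed as a subalgebra of the free $\V$-algebra $F$ on the disjoint generators $\{b_1,x_1,\dots,b_{n-1},x_{n-1}\}$. I argue by induction on the $t$-tree structure that every monomial appearing in an element of $S$ satisfies one of (a)~some $x_i$ occurs with multiplicity at least two, or (b)~the monomial uses generators from a single index $j$ only, i.e.\ lies in $B\flat X_j$. The base case is immediate, since each $B\flat X_j$ consists of polynomials in $b_j,x_j$ whose monomials contain $x_j$. For the inductive step, if a monomial has the form $t(M_1,\dots,M_n)$ with each $M_k$ already satisfying (a) or (b), and any $M_k$ satisfies~(a), then so does $t(M_1,\dots,M_n)$; otherwise each $M_k$ falls under case~(b), confined to some single index $j_k\in\{1,\dots,n-1\}$. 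Since $n$ arguments are being drawn from only $n-1$ indices, pigeonhole forces two of them to share an index $j$, so $x_j$ appears at least twice in $t(M_1,\dots,M_n)$.

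It follows that no monomial of any element of $S$ can be of multilinear type $(1,\dots,1,1)$ in $(x_1,\dots,x_{n-1},b_i)$ for any single~$i$: condition (a) is excluded because each $x_j$ has multiplicity one, and condition (b) is excluded because the monomial involves variables from more than one index. Now suppose the comparison map is surjective and pick $\xi\in S$ with $\phi(\xi)=\eta$. Applying the $n$-ary homogeneity theorem in $F$, decompose $\xi$ into its homogeneous components. The only components of $\xi$ whose image under $\phi$ can contribute to the type $(1,\dots,1,1)$ part of $\eta$ in $B+X_1+\cdots+X_{n-1}$ are those of type $(1,\dots,1;0,\dots,1_i,\dots,0)$, because $\phi$ identifies each $b_i$ with $b$ and hence sums the $b$-degrees. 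Each such component must vanish by the previous paragraph, so the type $(1,\dots,1,1)$ component of $\phi(\xi)$ is zero while $\eta$ equals its own type $(1,\dots,1,1)$ component. Hence $t(x_1,\dots,x_{n-1},b)=0$ is an identity of~$\V$; substitution then forces $t$ to be identically zero on every $\V$-algebra, so $\V$ is an abelian category. The hard part is the pigeonhole step, whose rigorous justification depends on the $n$-ary extension of the homogeneity theorem together with the fact that $S$ embeds faithfully in $F$ as a graded subalgebra.
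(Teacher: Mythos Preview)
Your argument is correct and follows the paper's approach: both exhibit $[b,x_1,\dots,x_{n-1}]$ as an element with no preimage under the comparison map, with your pigeonhole induction spelling out what the paper's one-line proof leaves implicit. Regarding the concern you flag at the end, you do not actually need $S$ to embed faithfully in $F$: since $\phi$ factors as $S\xrightarrow{\iota}F\xrightarrow{\psi}B+X_1+\cdots+X_{n-1}$, it suffices to analyse $\iota(\xi)\in F$, and your monomial dichotomy (a)/(b) can be established by lifting to the absolutely free $n$-magma algebra (where monomials form a genuine basis and the induction is literal) and then descending via the $n$-ary homogeneity theorem, which guarantees that the quotient map to $F$ respects the multi-grading.
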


\begin{proof}
We write the given $n$-linear operation as a bracket $[-,\dots,-]$. By the $n$-ary version of Theorem~\ref{Theorem Homogeneity}, there is no expression in $B\flat X_1+\cdots+B\flat X_{n-1}$ that can be mapped to the element $[b, x_1, \dots, x_{n-1}] \in B\flat (X_1+\cdots+X_{n-1})$---unless it is zero.
\end{proof}

A variety of algebras which forms a semi-abelian category~\cite{JaMaTh} is an \defn{algebraically coherent} category in the sense of~\cite{acc} when for all algebras $B$, $X$, $Y$ the comparison morphism
\[
B\flat X+B\flat Y\to B\flat (X+Y)
\]
is a surjective algebra morphism. This means that we may interpret the above result as follows.

\begin{corollary}
If $\V$ is a variety of $n$-algebras where $n \geq 3$, then $\V$ can only be an algebraically coherent category if it is an abelian category.\noproof
\end{corollary}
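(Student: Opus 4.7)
The plan is to derive the corollary from Theorem~\ref{Theorem n-Ary} by iterating the defining property of algebraic coherence. The hypothesis that $\V$ is algebraically coherent gives surjectivity of the two-summand comparison $B\flat X + B\flat Y \to B\flat(X+Y)$, while Theorem~\ref{Theorem n-Ary} is phrased in terms of the $(n-1)$-summand comparison. So the essential work is to upgrade one to the other by induction on the number of summands.

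Concretely, I would establish by induction on $k\geq 2$ that the canonical comparison
\[
B\flat X_1 + \cdots + B\flat X_k \longrightarrow B\flat(X_1 + \cdots + X_k)
\]
is surjective for all objects $B$, $X_1,\dots,X_k$ of $\V$. The base case $k=2$ is algebraic coherence. For the inductive step, I would factor the $k$-ary comparison as the composite
\[
\bigl(B\flat X_1 + \cdots + B\flat X_{k-1}\bigr) + B\flat X_k \longrightarrow B\flat(X_1 + \cdots + X_{k-1}) + B\flat X_k \longrightarrow B\flat(X_1 + \cdots + X_k),
\]
where the first arrow is obtained by applying $(-) + B\flat X_k$ to the induction hypothesis and the second arrow is the defining surjection of algebraic coherence with $X = X_1 + \cdots + X_{k-1}$ and $Y = X_k$. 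In any semi-abelian variety of algebras, binary coproducts preserve regular epimorphisms, so the first arrow is surjective; the second is surjective by assumption. The composite of two regular epimorphisms being a regular epimorphism, the $k$-ary comparison is surjective.

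Taking $k = n-1$, Theorem~\ref{Theorem n-Ary} immediately forces $\V$ to be an abelian category. I do not expect any serious obstacle here: the content is entirely packaged inside Theorem~\ref{Theorem n-Ary}, and only the routine reduction from two to $n-1$ summands remains. The one point that is worth explicitly noting is the stability of regular epimorphisms under binary coproducts, which holds in every semi-abelian category and in particular in any variety of $n$-algebras with $n\geq 2$.
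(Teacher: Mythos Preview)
Your proof is correct and matches the paper's intent: the corollary is marked \emph{no proof} there and is meant to be immediate from Theorem~\ref{Theorem n-Ary}. You have simply spelled out the one routine step left to the reader, namely that the binary surjectivity defining algebraic coherence propagates to the $(n-1)$-ary comparison by the obvious induction (using that $(-)+A$ preserves coequalizers, hence regular epimorphisms).
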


Theorem~\ref{Theorem Lie}, Proposition~\ref{Proposition Nullary Operation}, Proposition~\ref{Proposition Unary Operation} and Theorem~\ref{Theorem n-Ary} together now allow us to prove our main theorem.

\begin{theorem}
Let $\K$ be an infinite field. Let $\V$ be a variety of $n$-algebras over $\K$ which is a non-abelian \emph{locally algebraically cartesian closed} category. Then $n = 2$ and
\begin{enumerate}
\item if $\kar(\K)\neq 2$, then $\V = \Liek = \qLiek$;
\item if $\kar(\K)= 2$, then $\V = \Liek$ or $\V = \qLiek$.\noproof
\end{enumerate}
\end{theorem}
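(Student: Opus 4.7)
The plan is to perform a case analysis on the arity $n$, reducing the main theorem to a direct assembly of the results already proved in Sections~\ref{Section Subvarieties} and~\ref{Section n-Ary}. First I would dispose of the low-arity cases: for $n = 0$, Proposition~\ref{Proposition Nullary Operation} shows that varieties of $0$-algebras fail to be pointed (hence are never (semi-)abelian), putting them outside the semi-abelian setting in which the LACC condition is formulated throughout the paper, so this case is excluded. For $n = 1$, Proposition~\ref{Proposition Unary Operation} establishes that every variety of $1$-algebras is abelian, immediately contradicting the non-abelian hypothesis.

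Next I would treat the case $n \geq 3$ by unpacking the LACC assumption. In particular, the canonical comparison morphism
\[
B\flat X_1+\cdots+B\flat X_{n-1}\to B\flat (X_1+\cdots+X_{n-1})
\]
is an isomorphism, and hence surjective. Theorem~\ref{Theorem n-Ary} then forces $\V$ to be an abelian category, once again contradicting the non-abelian hypothesis. Consequently only $n = 2$ can occur.

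Finally, for $n = 2$ the variety $\V$ is a variety of non-associative $\K$-algebras, and Theorem~\ref{Theorem Lie} gives exactly the stated classification: $\V = \Liek$ when $\kar(\K) \neq 2$ (in which case $\Liek$ and $\qLiek$ coincide), while $\V \in \{\Liek, \qLiek\}$ when $\kar(\K) = 2$. Since every ingredient has already been proved, the argument is purely a matter of assembling the pieces; the only point demanding a moment's thought is the bookkeeping around the $n = 0$ case, whose exclusion rests on the paper's convention that LACC is discussed within the pointed/semi-abelian framework.
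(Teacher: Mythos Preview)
Your proposal is correct and follows exactly the paper's own argument: the sentence preceding the theorem explicitly states that it is a combination of Theorem~\ref{Theorem Lie}, Proposition~\ref{Proposition Nullary Operation}, Proposition~\ref{Proposition Unary Operation} and Theorem~\ref{Theorem n-Ary}, and the proof itself is suppressed via \verb|\noproof|. Your remark about the $n=0$ case is apt---the paper does not spell out that exclusion, and it indeed relies on the implicit semi-abelian framing used throughout.
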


\section{Final remarks}\label{Section Final Remarks}

\subsection{Related work}
Similar applications of computational commutative algebra to algebraic operads appear in~\cite{MR3642294, MR3693146, MR3640086, Graces}.

\subsection{A different proof technique?}
Theorem~\ref{Theorem degree 3} makes us wonder whether a less computationally involved method may be found for its proof. We did not manage to reduce the system of equations in Appendix~\ref{Appendix} to a smaller one, but perhaps a totally different argument can be discovered. This may also help extending the result to finite fields---if such an extension is possible at all.

\subsection{The case of sets}
Besides the variety of Lie algebras, the only varieties of algebras currently known to be \emph{locally algebraically cartesian closed} categories are varieties of groups. This raises the question whether perhaps the result of this paper has a cartesian version. This would first of all mean characterising the varieties of pointed magmas---whose objects are sets with a given base-point and a binary operation that preserves it---which are \LACC\ categories. An answer to this question would either distinguish groups amongst pointed magmas via a categorical condition, or provide new examples of \LACC\ categories. 

\subsection{Several operations}
The problem investigated in this article becomes very different when instead of a single bilinear operation, several bilinear or $n$-linear algebraic or coalgebraic operations on a given object are considered. We do not know under which conditions adding operations to a variety gives a way of constructing new~\LACC\ categories.

\subsection{Algebraically exponentiable objects and morphisms}
We studied algebraic exponentiation from a global perspective, by answering the question in which kind of varieties of $n$-algebras \emph{all} endofunctors $B\flat (-)$ are left adjoints. As explained in the introduction, the left adjointness of $B\flat (-)$ can also be seen as a property of a single given object~$B$, in which case it is said to be \defn{algebraically exponentiable}. More generally, algebraically exponentiable morphisms may be considered~\cite{Gray2012, Bourn-Gray}. Characterising those objects and morphisms may turn out to be an interesting challenge.

\section*{Acknowledgements}
We wish to thank Pierre-Alain Jacqmin for an important remark on the prime characteristic case. We also thank the referees for their helpful comments and suggestions.

The authors acknowledge the use of the Computer Cluster of the \emph{Centro de Supercomputaci\'on de Galicia} (CESGA).
The first author would like to thank the \emph{Institut de Recherche en Mathématique et Physique} (IRMP) for its kind hospitality during his stays in Louvain-la-Neuve, and the \emph{Department of Mathematics} of the University of Santiago.

\appendix
\section{The coefficients \texorpdfstring{$f_{i}$}{fi} used in the proof of Theorem~\ref{Theorem degree 3}}\label{Appendix}
\allowdisplaybreaks
\tiny
\begin{align*}
f_{1} &= \mu_{5}\mu_{5} + \mu_{6}\mu_{1} + \mu_{7}\lambda_{5} + \mu_{8}\lambda_{1} - 1 
& f_{17} &= \lambda_{5}\mu_{5} + \lambda_{6}\mu_{1} + \lambda_{7}\lambda_{5} + \lambda_{8}\lambda_{1} \\
f_{2} &= \mu_{5}\mu_{6} + \mu_{6}\mu_{2} + \mu_{7}\lambda_{6} + \mu_{8}\lambda_{2} 
&f_{18} &= \lambda_{5}\mu_{6} + \lambda_{6}\mu_{2} + \lambda_{7}\lambda_{6} + \lambda_{8}\lambda_{2} \\
f_{3} &= \mu_{5}\mu_{7} + \mu_{6}\mu_{3} + \mu_{7}\lambda_{7} + \mu_{8}\lambda_{3} 
&f_{19} &= \lambda_{5}\mu_{7} + \lambda_{6}\mu_{3} + \lambda_{7}\lambda_{7} + \lambda_{8}\lambda_{3} - 1 \\
f_{4} &= \mu_{5}\mu_{8} + \mu_{6}\mu_{4} + \mu_{7}\lambda_{8} + \mu_{8}\lambda_{4}
&f_{20} &= \lambda_{5}\mu_{8} + \lambda_{6}\mu_{4} + \lambda_{7}\lambda_{8} + \lambda_{8}\lambda_{4} \\
f_{5} &= \mu_{5}\mu_{1} + \mu_{6}\mu_{5} + \mu_{7}\lambda_{1} + \mu_{8}\lambda_{5} + \mu_{2} 
&f_{21} &= \lambda_{5}\mu_{1} + \lambda_{6}\mu_{5} + \lambda_{7}\lambda_{1} + \lambda_{8}\lambda_{5} + \lambda_{2} \\
f_{6} &= \mu_{5}\mu_{2} + \mu_{6}\mu_{6} + \mu_{7}\lambda_{2} + \mu_{8}\lambda_{6} + \mu_{1}
& f_{22} &= \lambda_{5}\mu_{2} + \lambda_{6}\mu_{6} + \lambda_{7}\lambda_{2} + \lambda_{8}\lambda_{6} + \lambda_{1} \\
f_{7} &= \mu_{5}\mu_{3} + \mu_{6}\mu_{7} + \mu_{7}\lambda_{3} + \mu_{8}\lambda_{7} + \mu_{4}
&f_{23} &= \lambda_{5}\mu_{3} + \lambda_{6}\mu_{7} + \lambda_{7}\lambda_{3} + \lambda_{8}\lambda_{7} + \lambda_{4} \\
f_{8} &= \mu_{5}\mu_{4} + \mu_{6}\mu_{8} + \mu_{7}\lambda_{4} + \mu_{8}\lambda_{8} + \mu_{3} 
&f_{24} &= \lambda_{5}\mu_{4} + \lambda_{6}\mu_{8} + \lambda_{7}\lambda_{4} + \lambda_{8}\lambda_{8} + \lambda_{3} \\
f_{9} &= \mu_{1}\mu_{5} + \mu_{2}\mu_{1} + \mu_{3}\lambda_{5} + \mu_{4}\lambda_{1} 
&f_{25} &= \lambda_{1}\mu_{5} + \lambda_{2}\mu_{1} + \lambda_{3}\lambda_{5} + \lambda_{4}\lambda_{1} \\
f_{10} &= \mu_{1}\mu_{6} + \mu_{2}\mu_{2} + \mu_{3}\lambda_{6} + \mu_{4}\lambda_{2} - 1
&f_{26} &= \lambda_{1}\mu_{6} + \lambda_{2}\mu_{2} + \lambda_{3}\lambda_{6} + \lambda_{4}\lambda_{2} \\
f_{11} &= \mu_{1}\mu_{7} + \mu_{2}\mu_{3} + \mu_{3}\lambda_{7} + \mu_{4}\lambda_{3}
&f_{27} &= \lambda_{1}\mu_{7} + \lambda_{2}\mu_{3} + \lambda_{3}\lambda_{7} + \lambda_{4}\lambda_{3} \\
f_{12} &= \mu_{1}\mu_{8} + \mu_{2}\mu_{4} + \mu_{3}\lambda_{8} + \mu_{4}\lambda_{4}
&f_{28} &= \lambda_{1}\mu_{8} + \lambda_{2}\mu_{4} + \lambda_{3}\lambda_{8} + \lambda_{4}\lambda_{4} - 1 \\
f_{13} &= \mu_{1}\mu_{1} + \mu_{2}\mu_{5} + \mu_{3}\lambda_{1} + \mu_{4}\lambda_{5} + \mu_{6}
&f_{29} &= \lambda_{1}\mu_{1} + \lambda_{2}\mu_{5} + \lambda_{3}\lambda_{1} + \lambda_{4}\lambda_{5} + \lambda_{6} \\
f_{14} &= \mu_{1}\mu_{2} + \mu_{2}\mu_{6} + \mu_{3}\lambda_{2} + \mu_{4}\lambda_{6} + \mu_{5}
&f_{30} &= \lambda_{1}\mu_{2} + \lambda_{2}\mu_{6} + \lambda_{3}\lambda_{2} + \lambda_{4}\lambda_{6} + \lambda_{5} \\
f_{15} &= \mu_{1}\mu_{3} + \mu_{2}\mu_{7} + \mu_{3}\lambda_{3} + \mu_{4}\lambda_{7} + \mu_{8}
&f_{31} &= \lambda_{1}\mu_{3} + \lambda_{2}\mu_{7} + \lambda_{3}\lambda_{3} + \lambda_{4}\lambda_{7} + \lambda_{8} \\
f_{16} &= \mu_{1}\mu_{4} + \mu_{2}\mu_{8} + \mu_{3}\lambda_{4} + \mu_{4}\lambda_{8} + \mu_{7}
&f_{32} &= \lambda_{1}\mu_{4} + \lambda_{2}\mu_{8} + \lambda_{3}\lambda_{4} + \lambda_{4}\lambda_{8} + \lambda_{7}
\end{align*}
\begin{align*}
f_{33} = \mu_{1}\mu_{1}\mu_{5} &+ \mu_{2}\lambda_{1}\mu_{5} + \mu_{3}\mu_{1}\lambda_{5} + \mu_{4}\lambda_{1}\lambda_{5} + \mu_{1}\mu_{3}\mu_{1} + \mu_{2}\lambda_{3}\mu_{1} + \mu_{3}\mu_{3}\lambda_{1} + \mu_{4}\lambda_{3}\lambda_{1} \\ &+ \mu_{5}\mu_{5}\mu_{7} + \mu_{6}\lambda_{5}\mu_{7} + \mu_{7}\mu_{5}\lambda_{7} + \mu_{8}\lambda_{5}\lambda_{7} + \mu_{5}\mu_{7}\mu_{3} + \mu_{6}\lambda_{7}\mu_{3} + \mu_{7}\mu_{7}\lambda_{3} + \mu_{8}\lambda_{7}\lambda_{3} \\
f_{34} = \mu_{1}\mu_{1}\mu_{6} &+ \mu_{2}\lambda_{1}\mu_{6} + \mu_{3}\mu_{1}\lambda_{6} + \mu_{4}\lambda_{1}\lambda_{6} + \mu_{1}\mu_{3}\mu_{2} + \mu_{2}\lambda_{3}\mu_{2} + \mu_{3}\mu_{3}\lambda_{2} + \mu_{4}\lambda_{3}\lambda_{2} \\ &+ \mu_{5}\mu_{6}\mu_{7} + \mu_{6}\lambda_{6}\mu_{7} + \mu_{7}\mu_{6}\lambda_{7} + \mu_{8}\lambda_{6}\lambda_{7} + \mu_{5}\mu_{8}\mu_{3} + \mu_{6}\lambda_{8}\mu_{3} + \mu_{7}\mu_{8}\lambda_{3} + \mu_{8}\lambda_{8}\lambda_{3} \\
f_{35} = \mu_{1}\mu_{1}\mu_{7} &+ \mu_{2}\lambda_{1}\mu_{7} + \mu_{3}\mu_{1}\lambda_{7} + \mu_{4}\lambda_{1}\lambda_{7} + \mu_{1}\mu_{3}\mu_{3} + \mu_{2}\lambda_{3}\mu_{3} + \mu_{3}\mu_{3}\lambda_{3} + \mu_{4}\lambda_{3}\lambda_{3} \\ &+ \mu_{5}\mu_{5}\mu_{5} + \mu_{6}\lambda_{5}\mu_{5} + \mu_{7}\mu_{5}\lambda_{5} + \mu_{8}\lambda_{5}\lambda_{5} + \mu_{5}\mu_{7}\mu_{1} + \mu_{6}\lambda_{7}\mu_{1} + \mu_{7}\mu_{7}\lambda_{1} + \mu_{8}\lambda_{7}\lambda_{1} \\
f_{36} = \mu_{1}\mu_{1}\mu_{8} &+ \mu_{2}\lambda_{1}\mu_{8} + \mu_{3}\mu_{1}\lambda_{8} + \mu_{4}\lambda_{1}\lambda_{8} + \mu_{1}\mu_{3}\mu_{4} + \mu_{2}\lambda_{3}\mu_{4} + \mu_{3}\mu_{3}\lambda_{4} + \mu_{4}\lambda_{3}\lambda_{4} \\ &+ \mu_{5}\mu_{6}\mu_{5} + \mu_{6}\lambda_{6}\mu_{5} + \mu_{7}\mu_{6}\lambda_{5} + \mu_{8}\lambda_{6}\lambda_{5} + \mu_{5}\mu_{8}\mu_{1} + \mu_{6}\lambda_{8}\mu_{1} + \mu_{7}\mu_{8}\lambda_{1} + \mu_{8}\lambda_{8}\lambda_{1} \\
f_{37} = \mu_{1}\mu_{2}\mu_{5} &+ \mu_{2}\lambda_{2}\mu_{5} + \mu_{3}\mu_{2}\lambda_{5} + \mu_{4}\lambda_{2}\lambda_{5} + \mu_{1}\mu_{4}\mu_{1} + \mu_{2}\lambda_{4}\mu_{1} + \mu_{3}\mu_{4}\lambda_{1} + \mu_{4}\lambda_{4}\lambda_{1} \\ &+ \mu_{5}\mu_{5}\mu_{8} + \mu_{6}\lambda_{5}\mu_{8} + \mu_{7}\mu_{5}\lambda_{8} + \mu_{8}\lambda_{5}\lambda_{8} + \mu_{5}\mu_{7}\mu_{4} + \mu_{6}\lambda_{7}\mu_{4} + \mu_{7}\mu_{7}\lambda_{4} + \mu_{8}\lambda_{7}\lambda_{4} \\
f_{38} = \mu_{1}\mu_{2}\mu_{6} &+ \mu_{2}\lambda_{2}\mu_{6} + \mu_{3}\mu_{2}\lambda_{6} + \mu_{4}\lambda_{2}\lambda_{6} + \mu_{1}\mu_{4}\mu_{2} + \mu_{2}\lambda_{4}\mu_{2} + \mu_{3}\mu_{4}\lambda_{2} + \mu_{4}\lambda_{4}\lambda_{2} \\ &+ \mu_{5}\mu_{6}\mu_{8} + \mu_{6}\lambda_{6}\mu_{8} + \mu_{7}\mu_{6}\lambda_{8} + \mu_{8}\lambda_{6}\lambda_{8} + \mu_{5}\mu_{8}\mu_{4} + \mu_{6}\lambda_{8}\mu_{4} + \mu_{7}\mu_{8}\lambda_{4} + \mu_{8}\lambda_{8}\lambda_{4} \\
f_{39} = \mu_{1}\mu_{2}\mu_{7} &+ \mu_{2}\lambda_{2}\mu_{7} + \mu_{3}\mu_{2}\lambda_{7} + \mu_{4}\lambda_{2}\lambda_{7} + \mu_{1}\mu_{4}\mu_{3} + \mu_{2}\lambda_{4}\mu_{3} + \mu_{3}\mu_{4}\lambda_{3} + \mu_{4}\lambda_{4}\lambda_{3} \\ &+ \mu_{5}\mu_{5}\mu_{6} + \mu_{6}\lambda_{5}\mu_{6} + \mu_{7}\mu_{5}\lambda_{6} + \mu_{8}\lambda_{5}\lambda_{6} + \mu_{5}\mu_{7}\mu_{2} + \mu_{6}\lambda_{7}\mu_{2} + \mu_{7}\mu_{7}\lambda_{2} + \mu_{8}\lambda_{7}\lambda_{2} \\
f_{40} = \mu_{1}\mu_{2}\mu_{8} &+ \mu_{2}\lambda_{2}\mu_{8} + \mu_{3}\mu_{2}\lambda_{8} + \mu_{4}\lambda_{2}\lambda_{8} + \mu_{1}\mu_{4}\mu_{4} + \mu_{2}\lambda_{4}\mu_{4} + \mu_{3}\mu_{4}\lambda_{4} + \mu_{4}\lambda_{4}\lambda_{4} \\ &+ \mu_{5}\mu_{6}\mu_{6} + \mu_{6}\lambda_{6}\mu_{6} + \mu_{7}\mu_{6}\lambda_{6} + \mu_{8}\lambda_{6}\lambda_{6} + \mu_{5}\mu_{8}\mu_{2} + \mu_{6}\lambda_{8}\mu_{2} + \mu_{7}\mu_{8}\lambda_{2} + \mu_{8}\lambda_{8}\lambda_{2} \\
f_{41} = \mu_{1}\mu_{5}\mu_{5} &+ \mu_{2}\lambda_{5}\mu_{5} + \mu_{3}\mu_{5}\lambda_{5} + \mu_{4}\lambda_{5}\lambda_{5} + \mu_{1}\mu_{7}\mu_{1} + \mu_{2}\lambda_{7}\mu_{1} + \mu_{3}\mu_{7}\lambda_{1} + \mu_{4}\lambda_{7}\lambda_{1} \\ &+ \mu_{5}\mu_{1}\mu_{7} + \mu_{6}\lambda_{1}\mu_{7} + \mu_{7}\mu_{1}\lambda_{7} + \mu_{8}\lambda_{1}\lambda_{7} + \mu_{5}\mu_{3}\mu_{3} + \mu_{6}\lambda_{3}\mu_{3} + \mu_{7}\mu_{3}\lambda_{3} + \mu_{8}\lambda_{3}\lambda_{3} \\
f_{42} = \mu_{1}\mu_{5}\mu_{6} &+ \mu_{2}\lambda_{5}\mu_{6} + \mu_{3}\mu_{5}\lambda_{6} + \mu_{4}\lambda_{5}\lambda_{6} + \mu_{1}\mu_{7}\mu_{2} + \mu_{2}\lambda_{7}\mu_{2} + \mu_{3}\mu_{7}\lambda_{2} + \mu_{4}\lambda_{7}\lambda_{2} \\ &+ \mu_{5}\mu_{2}\mu_{7} + \mu_{6}\lambda_{2}\mu_{7} + \mu_{7}\mu_{2}\lambda_{7} + \mu_{8}\lambda_{2}\lambda_{7} + \mu_{5}\mu_{4}\mu_{3} + \mu_{6}\lambda_{4}\mu_{3} + \mu_{7}\mu_{4}\lambda_{3} + \mu_{8}\lambda_{4}\lambda_{3} \\
f_{43} = \mu_{1}\mu_{5}\mu_{7} &+ \mu_{2}\lambda_{5}\mu_{7} + \mu_{3}\mu_{5}\lambda_{7} + \mu_{4}\lambda_{5}\lambda_{7} + \mu_{1}\mu_{7}\mu_{3} + \mu_{2}\lambda_{7}\mu_{3} + \mu_{3}\mu_{7}\lambda_{3} + \mu_{4}\lambda_{7}\lambda_{3} \\ &+ \mu_{5}\mu_{1}\mu_{5} + \mu_{6}\lambda_{1}\mu_{5} + \mu_{7}\mu_{1}\lambda_{5} + \mu_{8}\lambda_{1}\lambda_{5} + \mu_{5}\mu_{3}\mu_{1} + \mu_{6}\lambda_{3}\mu_{1} + \mu_{7}\mu_{3}\lambda_{1} + \mu_{8}\lambda_{3}\lambda_{1} \\
f_{44} = \mu_{1}\mu_{5}\mu_{8} &+ \mu_{2}\lambda_{5}\mu_{8} + \mu_{3}\mu_{5}\lambda_{8} + \mu_{4}\lambda_{5}\lambda_{8} + \mu_{1}\mu_{7}\mu_{4} + \mu_{2}\lambda_{7}\mu_{4} + \mu_{3}\mu_{7}\lambda_{4} + \mu_{4}\lambda_{7}\lambda_{4} \\ &+ \mu_{5}\mu_{2}\mu_{5} + \mu_{6}\lambda_{2}\mu_{5} + \mu_{7}\mu_{2}\lambda_{5} + \mu_{8}\lambda_{2}\lambda_{5} + \mu_{5}\mu_{4}\mu_{1} + \mu_{6}\lambda_{4}\mu_{1} + \mu_{7}\mu_{4}\lambda_{1} + \mu_{8}\lambda_{4}\lambda_{1} \\
f_{45} = \mu_{1}\mu_{6}\mu_{5} &+ \mu_{2}\lambda_{6}\mu_{5} + \mu_{3}\mu_{6}\lambda_{5} + \mu_{4}\lambda_{6}\lambda_{5} + \mu_{1}\mu_{8}\mu_{1} + \mu_{2}\lambda_{8}\mu_{1} + \mu_{3}\mu_{8}\lambda_{1} + \mu_{4}\lambda_{8}\lambda_{1} \\ &+ \mu_{5}\mu_{1}\mu_{8} + \mu_{6}\lambda_{1}\mu_{8} + \mu_{7}\mu_{1}\lambda_{8} + \mu_{8}\lambda_{1}\lambda_{8} + \mu_{5}\mu_{3}\mu_{4} + \mu_{6}\lambda_{3}\mu_{4} + \mu_{7}\mu_{3}\lambda_{4} + \mu_{8}\lambda_{3}\lambda_{4} \\
f_{46} = \mu_{1}\mu_{6}\mu_{6} &+ \mu_{2}\lambda_{6}\mu_{6} + \mu_{3}\mu_{6}\lambda_{6} + \mu_{4}\lambda_{6}\lambda_{6} + \mu_{1}\mu_{8}\mu_{2} + \mu_{2}\lambda_{8}\mu_{2} + \mu_{3}\mu_{8}\lambda_{2} + \mu_{4}\lambda_{8}\lambda_{2} \\ &+ \mu_{5}\mu_{2}\mu_{8} + \mu_{6}\lambda_{2}\mu_{8} + \mu_{7}\mu_{2}\lambda_{8} + \mu_{8}\lambda_{2}\lambda_{8} + \mu_{5}\mu_{4}\mu_{4} + \mu_{6}\lambda_{4}\mu_{4} + \mu_{7}\mu_{4}\lambda_{4} + \mu_{8}\lambda_{4}\lambda_{4} \\
f_{47} = \mu_{1}\mu_{6}\mu_{7} &+ \mu_{2}\lambda_{6}\mu_{7} + \mu_{3}\mu_{6}\lambda_{7} + \mu_{4}\lambda_{6}\lambda_{7} + \mu_{1}\mu_{8}\mu_{3} + \mu_{2}\lambda_{8}\mu_{3} + \mu_{3}\mu_{8}\lambda_{3} + \mu_{4}\lambda_{8}\lambda_{3} \\ &+ \mu_{5}\mu_{1}\mu_{6} + \mu_{6}\lambda_{1}\mu_{6} + \mu_{7}\mu_{1}\lambda_{6} + \mu_{8}\lambda_{1}\lambda_{6} + \mu_{5}\mu_{3}\mu_{2} + \mu_{6}\lambda_{3}\mu_{2} + \mu_{7}\mu_{3}\lambda_{2} + \mu_{8}\lambda_{3}\lambda_{2} \\
f_{48} = \mu_{1}\mu_{6}\mu_{8} &+ \mu_{2}\lambda_{6}\mu_{8} + \mu_{3}\mu_{6}\lambda_{8} + \mu_{4}\lambda_{6}\lambda_{8} + \mu_{1}\mu_{8}\mu_{4} + \mu_{2}\lambda_{8}\mu_{4} + \mu_{3}\mu_{8}\lambda_{4} + \mu_{4}\lambda_{8}\lambda_{4} \\ &+ \mu_{5}\mu_{2}\mu_{6} + \mu_{6}\lambda_{2}\mu_{6} + \mu_{7}\mu_{2}\lambda_{6} + \mu_{8}\lambda_{2}\lambda_{6} + \mu_{5}\mu_{4}\mu_{2} + \mu_{6}\lambda_{4}\mu_{2} + \mu_{7}\mu_{4}\lambda_{2} + \mu_{8}\lambda_{4}\lambda_{2} 
\end{align*}
\begin{align*}
f_{49} &= - \lambda_{1}\mu_{1} - \lambda_{2}\lambda_{1} + \mu_{1}\mu_{2}\mu_{2} + \mu_{2}\lambda_{2}\mu_{2} + \mu_{3}\mu_{2}\lambda_{2} + \mu_{4}\lambda_{2}\lambda_{2} + \mu_{1}\mu_{4}\mu_{6} + \mu_{2}\lambda_{4}\mu_{6} + \mu_{3}\mu_{4}\lambda_{6} + \mu_{4}\lambda_{4}\lambda_{6} \\
f_{50} &= - \lambda_{1}\mu_{2} - \lambda_{2}\lambda_{2} + \mu_{1}\mu_{1}\mu_{2} + \mu_{2}\lambda_{1}\mu_{2} + \mu_{3}\mu_{1}\lambda_{2} + \mu_{4}\lambda_{1}\lambda_{2} + \mu_{1}\mu_{3}\mu_{6} + \mu_{2}\lambda_{3}\mu_{6} + \mu_{3}\mu_{3}\lambda_{6} + \mu_{4}\lambda_{3}\lambda_{6} \\
f_{51} &= - \lambda_{1}\mu_{3} - \lambda_{2}\lambda_{3} + \mu_{1}\mu_{2}\mu_{1} + \mu_{2}\lambda_{2}\mu_{1} + \mu_{3}\mu_{2}\lambda_{1} + \mu_{4}\lambda_{2}\lambda_{1} + \mu_{1}\mu_{4}\mu_{5} + \mu_{2}\lambda_{4}\mu_{5} + \mu_{3}\mu_{4}\lambda_{5} + \mu_{4}\lambda_{4}\lambda_{5} \\
f_{52} &= - \lambda_{1}\mu_{4} - \lambda_{2}\lambda_{4} + \mu_{1}\mu_{1}\mu_{1} + \mu_{2}\lambda_{1}\mu_{1} + \mu_{3}\mu_{1}\lambda_{1} + \mu_{4}\lambda_{1}\lambda_{1} + \mu_{1}\mu_{3}\mu_{5} + \mu_{2}\lambda_{3}\mu_{5} + \mu_{3}\mu_{3}\lambda_{5} + \mu_{4}\lambda_{3}\lambda_{5} \\
f_{53} &= - \lambda_{1}\mu_{5} - \lambda_{2}\lambda_{5} + \mu_{5}\mu_{2}\mu_{2} + \mu_{6}\lambda_{2}\mu_{2} + \mu_{7}\mu_{2}\lambda_{2} + \mu_{8}\lambda_{2}\lambda_{2} + \mu_{5}\mu_{4}\mu_{6} + \mu_{6}\lambda_{4}\mu_{6} + \mu_{7}\mu_{4}\lambda_{6} + \mu_{8}\lambda_{4}\lambda_{6} \\
f_{54} &= - \lambda_{1}\mu_{6} - \lambda_{2}\lambda_{6} + \mu_{5}\mu_{1}\mu_{2} + \mu_{6}\lambda_{1}\mu_{2} + \mu_{7}\mu_{1}\lambda_{2} + \mu_{8}\lambda_{1}\lambda_{2} + \mu_{5}\mu_{3}\mu_{6} + \mu_{6}\lambda_{3}\mu_{6} + \mu_{7}\mu_{3}\lambda_{6} + \mu_{8}\lambda_{3}\lambda_{6} \\
f_{55} &= - \lambda_{1}\mu_{7} - \lambda_{2}\lambda_{7} + \mu_{5}\mu_{2}\mu_{1} + \mu_{6}\lambda_{2}\mu_{1} + \mu_{7}\mu_{2}\lambda_{1} + \mu_{8}\lambda_{2}\lambda_{1} + \mu_{5}\mu_{4}\mu_{5} + \mu_{6}\lambda_{4}\mu_{5} + \mu_{7}\mu_{4}\lambda_{5} + \mu_{8}\lambda_{4}\lambda_{5} \\
f_{56} &= - \lambda_{1}\mu_{8} - \lambda_{2}\lambda_{8} + \mu_{5}\mu_{1}\mu_{1} + \mu_{6}\lambda_{1}\mu_{1} + \mu_{7}\mu_{1}\lambda_{1} + \mu_{8}\lambda_{1}\lambda_{1} + \mu_{5}\mu_{3}\mu_{5} + \mu_{6}\lambda_{3}\mu_{5} + \mu_{7}\mu_{3}\lambda_{5} + \mu_{8}\lambda_{3}\lambda_{5} \\
f_{57} &= - \lambda_{3}\mu_{1} - \lambda_{4}\lambda_{1} + \mu_{1}\mu_{2}\mu_{4} + \mu_{2}\lambda_{2}\mu_{4} + \mu_{3}\mu_{2}\lambda_{4} + \mu_{4}\lambda_{2}\lambda_{4} + \mu_{1}\mu_{4}\mu_{8} + \mu_{2}\lambda_{4}\mu_{8} + \mu_{3}\mu_{4}\lambda_{8} + \mu_{4}\lambda_{4}\lambda_{8} \\
f_{58} &= - \lambda_{3}\mu_{2} - \lambda_{4}\lambda_{2} + \mu_{1}\mu_{1}\mu_{4} + \mu_{2}\lambda_{1}\mu_{4} + \mu_{3}\mu_{1}\lambda_{4} + \mu_{4}\lambda_{1}\lambda_{4} + \mu_{1}\mu_{3}\mu_{8} + \mu_{2}\lambda_{3}\mu_{8} + \mu_{3}\mu_{3}\lambda_{8} + \mu_{4}\lambda_{3}\lambda_{8} \\
f_{59} &= - \lambda_{3}\mu_{3} - \lambda_{4}\lambda_{3} + \mu_{1}\mu_{2}\mu_{3} + \mu_{2}\lambda_{2}\mu_{3} + \mu_{3}\mu_{2}\lambda_{3} + \mu_{4}\lambda_{2}\lambda_{3} + \mu_{1}\mu_{4}\mu_{7} + \mu_{2}\lambda_{4}\mu_{7} + \mu_{3}\mu_{4}\lambda_{7} + \mu_{4}\lambda_{4}\lambda_{7} \\
f_{60} &= - \lambda_{3}\mu_{4} - \lambda_{4}\lambda_{4} + \mu_{1}\mu_{1}\mu_{3} + \mu_{2}\lambda_{1}\mu_{3} + \mu_{3}\mu_{1}\lambda_{3} + \mu_{4}\lambda_{1}\lambda_{3} + \mu_{1}\mu_{3}\mu_{7} + \mu_{2}\lambda_{3}\mu_{7} + \mu_{3}\mu_{3}\lambda_{7} + \mu_{4}\lambda_{3}\lambda_{7} \\
f_{61} &= - \lambda_{3}\mu_{5} - \lambda_{4}\lambda_{5} + \mu_{5}\mu_{2}\mu_{4} + \mu_{6}\lambda_{2}\mu_{4} + \mu_{7}\mu_{2}\lambda_{4} + \mu_{8}\lambda_{2}\lambda_{4} + \mu_{5}\mu_{4}\mu_{8} + \mu_{6}\lambda_{4}\mu_{8} + \mu_{7}\mu_{4}\lambda_{8} + \mu_{8}\lambda_{4}\lambda_{8} \\
f_{62} &= - \lambda_{3}\mu_{6} - \lambda_{4}\lambda_{6} + \mu_{5}\mu_{1}\mu_{4} + \mu_{6}\lambda_{1}\mu_{4} + \mu_{7}\mu_{1}\lambda_{4} + \mu_{8}\lambda_{1}\lambda_{4} + \mu_{5}\mu_{3}\mu_{8} + \mu_{6}\lambda_{3}\mu_{8} + \mu_{7}\mu_{3}\lambda_{8} + \mu_{8}\lambda_{3}\lambda_{8} \\
f_{63} &= - \lambda_{3}\mu_{7} - \lambda_{4}\lambda_{7} + \mu_{5}\mu_{2}\mu_{3} + \mu_{6}\lambda_{2}\mu_{3} + \mu_{7}\mu_{2}\lambda_{3} + \mu_{8}\lambda_{2}\lambda_{3} + \mu_{5}\mu_{4}\mu_{7} + \mu_{6}\lambda_{4}\mu_{7} + \mu_{7}\mu_{4}\lambda_{7} + \mu_{8}\lambda_{4}\lambda_{7} \\
f_{64} &= - \lambda_{3}\mu_{8} - \lambda_{4}\lambda_{8} + \mu_{5}\mu_{1}\mu_{3} + \mu_{6}\lambda_{1}\mu_{3} + \mu_{7}\mu_{1}\lambda_{3} + \mu_{8}\lambda_{1}\lambda_{3} + \mu_{5}\mu_{3}\mu_{7} + \mu_{6}\lambda_{3}\mu_{7} + \mu_{7}\mu_{3}\lambda_{7} + \mu_{8}\lambda_{3}\lambda_{7} \\
f_{65} &= - \lambda_{5}\mu_{1} - \lambda_{6}\lambda_{1} + \mu_{1}\mu_{6}\mu_{2} + \mu_{2}\lambda_{6}\mu_{2} + \mu_{3}\mu_{6}\lambda_{2} + \mu_{4}\lambda_{6}\lambda_{2} + \mu_{1}\mu_{8}\mu_{6} + \mu_{2}\lambda_{8}\mu_{6} + \mu_{3}\mu_{8}\lambda_{6} + \mu_{4}\lambda_{8}\lambda_{6} \\
f_{66} &= - \lambda_{5}\mu_{2} - \lambda_{6}\lambda_{2} + \mu_{1}\mu_{5}\mu_{2} + \mu_{2}\lambda_{5}\mu_{2} + \mu_{3}\mu_{5}\lambda_{2} + \mu_{4}\lambda_{5}\lambda_{2} + \mu_{1}\mu_{7}\mu_{6} + \mu_{2}\lambda_{7}\mu_{6} + \mu_{3}\mu_{7}\lambda_{6} + \mu_{4}\lambda_{7}\lambda_{6} \\
f_{67} &= - \lambda_{5}\mu_{3} - \lambda_{6}\lambda_{3} + \mu_{1}\mu_{6}\mu_{1} + \mu_{2}\lambda_{6}\mu_{1} + \mu_{3}\mu_{6}\lambda_{1} + \mu_{4}\lambda_{6}\lambda_{1} + \mu_{1}\mu_{8}\mu_{5} + \mu_{2}\lambda_{8}\mu_{5} + \mu_{3}\mu_{8}\lambda_{5} + \mu_{4}\lambda_{8}\lambda_{5} \\
f_{68} &= - \lambda_{5}\mu_{4} - \lambda_{6}\lambda_{4} + \mu_{1}\mu_{5}\mu_{1} + \mu_{2}\lambda_{5}\mu_{1} + \mu_{3}\mu_{5}\lambda_{1} + \mu_{4}\lambda_{5}\lambda_{1} + \mu_{1}\mu_{7}\mu_{5} + \mu_{2}\lambda_{7}\mu_{5} + \mu_{3}\mu_{7}\lambda_{5} + \mu_{4}\lambda_{7}\lambda_{5} \\
f_{69} &= - \lambda_{5}\mu_{5} - \lambda_{6}\lambda_{5} + \mu_{5}\mu_{6}\mu_{2} + \mu_{6}\lambda_{6}\mu_{2} + \mu_{7}\mu_{6}\lambda_{2} + \mu_{8}\lambda_{6}\lambda_{2} + \mu_{5}\mu_{8}\mu_{6} + \mu_{6}\lambda_{8}\mu_{6} + \mu_{7}\mu_{8}\lambda_{6} + \mu_{8}\lambda_{8}\lambda_{6} \\
f_{70} &= - \lambda_{5}\mu_{6} - \lambda_{6}\lambda_{6} + \mu_{5}\mu_{5}\mu_{2} + \mu_{6}\lambda_{5}\mu_{2} + \mu_{7}\mu_{5}\lambda_{2} + \mu_{8}\lambda_{5}\lambda_{2} + \mu_{5}\mu_{7}\mu_{6} + \mu_{6}\lambda_{7}\mu_{6} + \mu_{7}\mu_{7}\lambda_{6} + \mu_{8}\lambda_{7}\lambda_{6} \\
f_{71} &= - \lambda_{5}\mu_{7} - \lambda_{6}\lambda_{7} + \mu_{5}\mu_{6}\mu_{1} + \mu_{6}\lambda_{6}\mu_{1} + \mu_{7}\mu_{6}\lambda_{1} + \mu_{8}\lambda_{6}\lambda_{1} + \mu_{5}\mu_{8}\mu_{5} + \mu_{6}\lambda_{8}\mu_{5} + \mu_{7}\mu_{8}\lambda_{5} + \mu_{8}\lambda_{8}\lambda_{5} \\
f_{72} &= - \lambda_{5}\mu_{8} - \lambda_{6}\lambda_{8} + \mu_{5}\mu_{5}\mu_{1} + \mu_{6}\lambda_{5}\mu_{1} + \mu_{7}\mu_{5}\lambda_{1} + \mu_{8}\lambda_{5}\lambda_{1} + \mu_{5}\mu_{7}\mu_{5} + \mu_{6}\lambda_{7}\mu_{5} + \mu_{7}\mu_{7}\lambda_{5} + \mu_{8}\lambda_{7}\lambda_{5} \\
f_{73} &= - \lambda_{7}\mu_{1} - \lambda_{8}\lambda_{1} + \mu_{1}\mu_{6}\mu_{4} + \mu_{2}\lambda_{6}\mu_{4} + \mu_{3}\mu_{6}\lambda_{4} + \mu_{4}\lambda_{6}\lambda_{4} + \mu_{1}\mu_{8}\mu_{8} + \mu_{2}\lambda_{8}\mu_{8} + \mu_{3}\mu_{8}\lambda_{8} + \mu_{4}\lambda_{8}\lambda_{8} \\
f_{74} &= - \lambda_{7}\mu_{2} - \lambda_{8}\lambda_{2} + \mu_{1}\mu_{5}\mu_{4} + \mu_{2}\lambda_{5}\mu_{4} + \mu_{3}\mu_{5}\lambda_{4} + \mu_{4}\lambda_{5}\lambda_{4} + \mu_{1}\mu_{7}\mu_{8} + \mu_{2}\lambda_{7}\mu_{8} + \mu_{3}\mu_{7}\lambda_{8} + \mu_{4}\lambda_{7}\lambda_{8} \\
f_{75} &= - \lambda_{7}\mu_{3} - \lambda_{8}\lambda_{3} + \mu_{1}\mu_{6}\mu_{3} + \mu_{2}\lambda_{6}\mu_{3} + \mu_{3}\mu_{6}\lambda_{3} + \mu_{4}\lambda_{6}\lambda_{3} + \mu_{1}\mu_{8}\mu_{7} + \mu_{2}\lambda_{8}\mu_{7} + \mu_{3}\mu_{8}\lambda_{7} + \mu_{4}\lambda_{8}\lambda_{7} \\
f_{76} &= - \lambda_{7}\mu_{4} - \lambda_{8}\lambda_{4} + \mu_{1}\mu_{5}\mu_{3} + \mu_{2}\lambda_{5}\mu_{3} + \mu_{3}\mu_{5}\lambda_{3} + \mu_{4}\lambda_{5}\lambda_{3} + \mu_{1}\mu_{7}\mu_{7} + \mu_{2}\lambda_{7}\mu_{7} + \mu_{3}\mu_{7}\lambda_{7} + \mu_{4}\lambda_{7}\lambda_{7} \\
f_{77} &= - \lambda_{7}\mu_{5} - \lambda_{8}\lambda_{5} + \mu_{5}\mu_{6}\mu_{4} + \mu_{6}\lambda_{6}\mu_{4} + \mu_{7}\mu_{6}\lambda_{4} + \mu_{8}\lambda_{6}\lambda_{4} + \mu_{5}\mu_{8}\mu_{8} + \mu_{6}\lambda_{8}\mu_{8} + \mu_{7}\mu_{8}\lambda_{8} + \mu_{8}\lambda_{8}\lambda_{8} \\
f_{78} &= - \lambda_{7}\mu_{6} - \lambda_{8}\lambda_{6} + \mu_{5}\mu_{5}\mu_{4} + \mu_{6}\lambda_{5}\mu_{4} + \mu_{7}\mu_{5}\lambda_{4} + \mu_{8}\lambda_{5}\lambda_{4} + \mu_{5}\mu_{7}\mu_{8} + \mu_{6}\lambda_{7}\mu_{8} + \mu_{7}\mu_{7}\lambda_{8} + \mu_{8}\lambda_{7}\lambda_{8} \\
f_{79} &= - \lambda_{7}\mu_{7} - \lambda_{8}\lambda_{7} + \mu_{5}\mu_{6}\mu_{3} + \mu_{6}\lambda_{6}\mu_{3} + \mu_{7}\mu_{6}\lambda_{3} + \mu_{8}\lambda_{6}\lambda_{3} + \mu_{5}\mu_{8}\mu_{7} + \mu_{6}\lambda_{8}\mu_{7} + \mu_{7}\mu_{8}\lambda_{7} + \mu_{8}\lambda_{8}\lambda_{7} \\
f_{80} &= - \lambda_{7}\mu_{8} - \lambda_{8}\lambda_{8} + \mu_{5}\mu_{5}\mu_{3} + \mu_{6}\lambda_{5}\mu_{3} + \mu_{7}\mu_{5}\lambda_{3} + \mu_{8}\lambda_{5}\lambda_{3} + \mu_{5}\mu_{7}\mu_{7} + \mu_{6}\lambda_{7}\mu_{7} + \mu_{7}\mu_{7}\lambda_{7} + \mu_{8}\lambda_{7}\lambda_{7} 
\end{align*}
\begin{align*}
f_{81} = \lambda_{1}\mu_{1}\mu_{5} &+ \lambda_{2}\lambda_{1}\mu_{5} + \lambda_{3}\mu_{1}\lambda_{5} + \lambda_{4}\lambda_{1}\lambda_{5} + \lambda_{1}\mu_{3}\mu_{1} + \lambda_{2}\lambda_{3}\mu_{1} + \lambda_{3}\mu_{3}\lambda_{1} + \lambda_{4}\lambda_{3}\lambda_{1} \\ &+ \lambda_{5}\mu_{5}\mu_{7} + \lambda_{6}\lambda_{5}\mu_{7} + \lambda_{7}\mu_{5}\lambda_{7} + \lambda_{8}\lambda_{5}\lambda_{7} + \lambda_{5}\mu_{7}\mu_{3} + \lambda_{6}\lambda_{7}\mu_{3} + \lambda_{7}\mu_{7}\lambda_{3} + \lambda_{8}\lambda_{7}\lambda_{3} \\
f_{82} = \lambda_{1}\mu_{1}\mu_{6} &+ \lambda_{2}\lambda_{1}\mu_{6} + \lambda_{3}\mu_{1}\lambda_{6} + \lambda_{4}\lambda_{1}\lambda_{6} + \lambda_{1}\mu_{3}\mu_{2} + \lambda_{2}\lambda_{3}\mu_{2} + \lambda_{3}\mu_{3}\lambda_{2} + \lambda_{4}\lambda_{3}\lambda_{2} \\ &+ \lambda_{5}\mu_{6}\mu_{7} + \lambda_{6}\lambda_{6}\mu_{7} + \lambda_{7}\mu_{6}\lambda_{7} + \lambda_{8}\lambda_{6}\lambda_{7} + \lambda_{5}\mu_{8}\mu_{3} + \lambda_{6}\lambda_{8}\mu_{3} + \lambda_{7}\mu_{8}\lambda_{3} + \lambda_{8}\lambda_{8}\lambda_{3} \\
f_{83} = \lambda_{1}\mu_{1}\mu_{7} &+ \lambda_{2}\lambda_{1}\mu_{7} + \lambda_{3}\mu_{1}\lambda_{7} + \lambda_{4}\lambda_{1}\lambda_{7} + \lambda_{1}\mu_{3}\mu_{3} + \lambda_{2}\lambda_{3}\mu_{3} + \lambda_{3}\mu_{3}\lambda_{3} + \lambda_{4}\lambda_{3}\lambda_{3} \\ &+ \lambda_{5}\mu_{5}\mu_{5} + \lambda_{6}\lambda_{5}\mu_{5} + \lambda_{7}\mu_{5}\lambda_{5} + \lambda_{8}\lambda_{5}\lambda_{5} + \lambda_{5}\mu_{7}\mu_{1} + \lambda_{6}\lambda_{7}\mu_{1} + \lambda_{7}\mu_{7}\lambda_{1} + \lambda_{8}\lambda_{7}\lambda_{1} \\
f_{84} = \lambda_{1}\mu_{1}\mu_{8} &+ \lambda_{2}\lambda_{1}\mu_{8} + \lambda_{3}\mu_{1}\lambda_{8} + \lambda_{4}\lambda_{1}\lambda_{8} + \lambda_{1}\mu_{3}\mu_{4} + \lambda_{2}\lambda_{3}\mu_{4} + \lambda_{3}\mu_{3}\lambda_{4} + \lambda_{4}\lambda_{3}\lambda_{4} \\ &+ \lambda_{5}\mu_{6}\mu_{5} + \lambda_{6}\lambda_{6}\mu_{5} + \lambda_{7}\mu_{6}\lambda_{5} + \lambda_{8}\lambda_{6}\lambda_{5} + \lambda_{5}\mu_{8}\mu_{1} + \lambda_{6}\lambda_{8}\mu_{1} + \lambda_{7}\mu_{8}\lambda_{1} + \lambda_{8}\lambda_{8}\lambda_{1} \\
f_{85} = \lambda_{1}\mu_{2}\mu_{5} &+ \lambda_{2}\lambda_{2}\mu_{5} + \lambda_{3}\mu_{2}\lambda_{5} + \lambda_{4}\lambda_{2}\lambda_{5} + \lambda_{1}\mu_{4}\mu_{1} + \lambda_{2}\lambda_{4}\mu_{1} + \lambda_{3}\mu_{4}\lambda_{1} + \lambda_{4}\lambda_{4}\lambda_{1} \\ &+ \lambda_{5}\mu_{5}\mu_{8} + \lambda_{6}\lambda_{5}\mu_{8} + \lambda_{7}\mu_{5}\lambda_{8} + \lambda_{8}\lambda_{5}\lambda_{8} + \lambda_{5}\mu_{7}\mu_{4} + \lambda_{6}\lambda_{7}\mu_{4} + \lambda_{7}\mu_{7}\lambda_{4} + \lambda_{8}\lambda_{7}\lambda_{4} \\
f_{86} = \lambda_{1}\mu_{2}\mu_{6} &+ \lambda_{2}\lambda_{2}\mu_{6} + \lambda_{3}\mu_{2}\lambda_{6} + \lambda_{4}\lambda_{2}\lambda_{6} + \lambda_{1}\mu_{4}\mu_{2} + \lambda_{2}\lambda_{4}\mu_{2} + \lambda_{3}\mu_{4}\lambda_{2} + \lambda_{4}\lambda_{4}\lambda_{2} \\ &+ \lambda_{5}\mu_{6}\mu_{8} + \lambda_{6}\lambda_{6}\mu_{8} + \lambda_{7}\mu_{6}\lambda_{8} + \lambda_{8}\lambda_{6}\lambda_{8} + \lambda_{5}\mu_{8}\mu_{4} + \lambda_{6}\lambda_{8}\mu_{4} + \lambda_{7}\mu_{8}\lambda_{4} + \lambda_{8}\lambda_{8}\lambda_{4} \\
f_{87} = \lambda_{1}\mu_{2}\mu_{7} &+ \lambda_{2}\lambda_{2}\mu_{7} + \lambda_{3}\mu_{2}\lambda_{7} + \lambda_{4}\lambda_{2}\lambda_{7} + \lambda_{1}\mu_{4}\mu_{3} + \lambda_{2}\lambda_{4}\mu_{3} + \lambda_{3}\mu_{4}\lambda_{3} + \lambda_{4}\lambda_{4}\lambda_{3} \\ &+ \lambda_{5}\mu_{5}\mu_{6} + \lambda_{6}\lambda_{5}\mu_{6} + \lambda_{7}\mu_{5}\lambda_{6} + \lambda_{8}\lambda_{5}\lambda_{6} + \lambda_{5}\mu_{7}\mu_{2} + \lambda_{6}\lambda_{7}\mu_{2} + \lambda_{7}\mu_{7}\lambda_{2} + \lambda_{8}\lambda_{7}\lambda_{2} \\
f_{88} = \lambda_{1}\mu_{2}\mu_{8} &+ \lambda_{2}\lambda_{2}\mu_{8} + \lambda_{3}\mu_{2}\lambda_{8} + \lambda_{4}\lambda_{2}\lambda_{8} + \lambda_{1}\mu_{4}\mu_{4} + \lambda_{2}\lambda_{4}\mu_{4} + \lambda_{3}\mu_{4}\lambda_{4} + \lambda_{4}\lambda_{4}\lambda_{4} \\ &+ \lambda_{5}\mu_{6}\mu_{6} + \lambda_{6}\lambda_{6}\mu_{6} + \lambda_{7}\mu_{6}\lambda_{6} + \lambda_{8}\lambda_{6}\lambda_{6} + \lambda_{5}\mu_{8}\mu_{2} + \lambda_{6}\lambda_{8}\mu_{2} + \lambda_{7}\mu_{8}\lambda_{2} + \lambda_{8}\lambda_{8}\lambda_{2} \\
f_{89} = \lambda_{1}\mu_{5}\mu_{5} &+ \lambda_{2}\lambda_{5}\mu_{5} + \lambda_{3}\mu_{5}\lambda_{5} + \lambda_{4}\lambda_{5}\lambda_{5} + \lambda_{1}\mu_{7}\mu_{1} + \lambda_{2}\lambda_{7}\mu_{1} + \lambda_{3}\mu_{7}\lambda_{1} + \lambda_{4}\lambda_{7}\lambda_{1} \\ &+ \lambda_{5}\mu_{1}\mu_{7} + \lambda_{6}\lambda_{1}\mu_{7} + \lambda_{7}\mu_{1}\lambda_{7} + \lambda_{8}\lambda_{1}\lambda_{7} + \lambda_{5}\mu_{3}\mu_{3} + \lambda_{6}\lambda_{3}\mu_{3} + \lambda_{7}\mu_{3}\lambda_{3} + \lambda_{8}\lambda_{3}\lambda_{3} \\
f_{90} = \lambda_{1}\mu_{5}\mu_{6} &+ \lambda_{2}\lambda_{5}\mu_{6} + \lambda_{3}\mu_{5}\lambda_{6} + \lambda_{4}\lambda_{5}\lambda_{6} + \lambda_{1}\mu_{7}\mu_{2} + \lambda_{2}\lambda_{7}\mu_{2} + \lambda_{3}\mu_{7}\lambda_{2} + \lambda_{4}\lambda_{7}\lambda_{2} \\ &+ \lambda_{5}\mu_{2}\mu_{7} + \lambda_{6}\lambda_{2}\mu_{7} + \lambda_{7}\mu_{2}\lambda_{7} + \lambda_{8}\lambda_{2}\lambda_{7} + \lambda_{5}\mu_{4}\mu_{3} + \lambda_{6}\lambda_{4}\mu_{3} + \lambda_{7}\mu_{4}\lambda_{3} + \lambda_{8}\lambda_{4}\lambda_{3} \\
f_{91} = \lambda_{1}\mu_{5}\mu_{7} &+ \lambda_{2}\lambda_{5}\mu_{7} + \lambda_{3}\mu_{5}\lambda_{7} + \lambda_{4}\lambda_{5}\lambda_{7} + \lambda_{1}\mu_{7}\mu_{3} + \lambda_{2}\lambda_{7}\mu_{3} + \lambda_{3}\mu_{7}\lambda_{3} + \lambda_{4}\lambda_{7}\lambda_{3} \\ &+ \lambda_{5}\mu_{1}\mu_{5} + \lambda_{6}\lambda_{1}\mu_{5} + \lambda_{7}\mu_{1}\lambda_{5} + \lambda_{8}\lambda_{1}\lambda_{5} + \lambda_{5}\mu_{3}\mu_{1} + \lambda_{6}\lambda_{3}\mu_{1} + \lambda_{7}\mu_{3}\lambda_{1} + \lambda_{8}\lambda_{3}\lambda_{1} \\
f_{92} = \lambda_{1}\mu_{5}\mu_{8} &+ \lambda_{2}\lambda_{5}\mu_{8} + \lambda_{3}\mu_{5}\lambda_{8} + \lambda_{4}\lambda_{5}\lambda_{8} + \lambda_{1}\mu_{7}\mu_{4} + \lambda_{2}\lambda_{7}\mu_{4} + \lambda_{3}\mu_{7}\lambda_{4} + \lambda_{4}\lambda_{7}\lambda_{4} \\ &+ \lambda_{5}\mu_{2}\mu_{5} + \lambda_{6}\lambda_{2}\mu_{5} + \lambda_{7}\mu_{2}\lambda_{5} + \lambda_{8}\lambda_{2}\lambda_{5} + \lambda_{5}\mu_{4}\mu_{1} + \lambda_{6}\lambda_{4}\mu_{1} + \lambda_{7}\mu_{4}\lambda_{1} + \lambda_{8}\lambda_{4}\lambda_{1} \\
f_{93} = \lambda_{1}\mu_{6}\mu_{5} &+ \lambda_{2}\lambda_{6}\mu_{5} + \lambda_{3}\mu_{6}\lambda_{5} + \lambda_{4}\lambda_{6}\lambda_{5} + \lambda_{1}\mu_{8}\mu_{1} + \lambda_{2}\lambda_{8}\mu_{1} + \lambda_{3}\mu_{8}\lambda_{1} + \lambda_{4}\lambda_{8}\lambda_{1} \\ &+ \lambda_{5}\mu_{1}\mu_{8} + \lambda_{6}\lambda_{1}\mu_{8} + \lambda_{7}\mu_{1}\lambda_{8} + \lambda_{8}\lambda_{1}\lambda_{8} + \lambda_{5}\mu_{3}\mu_{4} + \lambda_{6}\lambda_{3}\mu_{4} + \lambda_{7}\mu_{3}\lambda_{4} + \lambda_{8}\lambda_{3}\lambda_{4} \\
f_{94} = \lambda_{1}\mu_{6}\mu_{6} &+ \lambda_{2}\lambda_{6}\mu_{6} + \lambda_{3}\mu_{6}\lambda_{6} + \lambda_{4}\lambda_{6}\lambda_{6} + \lambda_{1}\mu_{8}\mu_{2} + \lambda_{2}\lambda_{8}\mu_{2} + \lambda_{3}\mu_{8}\lambda_{2} + \lambda_{4}\lambda_{8}\lambda_{2} \\ &+ \lambda_{5}\mu_{2}\mu_{8} + \lambda_{6}\lambda_{2}\mu_{8} + \lambda_{7}\mu_{2}\lambda_{8} + \lambda_{8}\lambda_{2}\lambda_{8} + \lambda_{5}\mu_{4}\mu_{4} + \lambda_{6}\lambda_{4}\mu_{4} + \lambda_{7}\mu_{4}\lambda_{4} + \lambda_{8}\lambda_{4}\lambda_{4} \\
f_{95} = \lambda_{1}\mu_{6}\mu_{7} &+ \lambda_{2}\lambda_{6}\mu_{7} + \lambda_{3}\mu_{6}\lambda_{7} + \lambda_{4}\lambda_{6}\lambda_{7} + \lambda_{1}\mu_{8}\mu_{3} + \lambda_{2}\lambda_{8}\mu_{3} + \lambda_{3}\mu_{8}\lambda_{3} + \lambda_{4}\lambda_{8}\lambda_{3} \\ &+ \lambda_{5}\mu_{1}\mu_{6} + \lambda_{6}\lambda_{1}\mu_{6} + \lambda_{7}\mu_{1}\lambda_{6} + \lambda_{8}\lambda_{1}\lambda_{6} + \lambda_{5}\mu_{3}\mu_{2} + \lambda_{6}\lambda_{3}\mu_{2} + \lambda_{7}\mu_{3}\lambda_{2} + \lambda_{8}\lambda_{3}\lambda_{2} \\
f_{96} = \lambda_{1}\mu_{6}\mu_{8} &+ \lambda_{2}\lambda_{6}\mu_{8} + \lambda_{3}\mu_{6}\lambda_{8} + \lambda_{4}\lambda_{6}\lambda_{8} + \lambda_{1}\mu_{8}\mu_{4} + \lambda_{2}\lambda_{8}\mu_{4} + \lambda_{3}\mu_{8}\lambda_{4} + \lambda_{4}\lambda_{8}\lambda_{4} \\ &+ \lambda_{5}\mu_{2}\mu_{6} + \lambda_{6}\lambda_{2}\mu_{6} + \lambda_{7}\mu_{2}\lambda_{6} + \lambda_{8}\lambda_{2}\lambda_{6} + \lambda_{5}\mu_{4}\mu_{2} + \lambda_{6}\lambda_{4}\mu_{2} + \lambda_{7}\mu_{4}\lambda_{2} + \lambda_{8}\lambda_{4}\lambda_{2} 
\end{align*}
\begin{align*}
f_{97} &= - \mu_{1}\mu_{1} - \mu_{2}\lambda_{1} + \lambda_{1}\mu_{2}\mu_{2} + \lambda_{2}\lambda_{2}\mu_{2} + \lambda_{3}\mu_{2}\lambda_{2} + \lambda_{4}\lambda_{2}\lambda_{2} + \lambda_{1}\mu_{4}\mu_{6} + \lambda_{2}\lambda_{4}\mu_{6} + \lambda_{3}\mu_{4}\lambda_{6} + \lambda_{4}\lambda_{4}\lambda_{6} \\
f_{98} &= - \mu_{1}\mu_{2} - \mu_{2}\lambda_{2} + \lambda_{1}\mu_{1}\mu_{2} + \lambda_{2}\lambda_{1}\mu_{2} + \lambda_{3}\mu_{1}\lambda_{2} + \lambda_{4}\lambda_{1}\lambda_{2} + \lambda_{1}\mu_{3}\mu_{6} + \lambda_{2}\lambda_{3}\mu_{6} + \lambda_{3}\mu_{3}\lambda_{6} + \lambda_{4}\lambda_{3}\lambda_{6} \\
f_{99} &= - \mu_{1}\mu_{3} - \mu_{2}\lambda_{3} + \lambda_{1}\mu_{2}\mu_{1} + \lambda_{2}\lambda_{2}\mu_{1} + \lambda_{3}\mu_{2}\lambda_{1} + \lambda_{4}\lambda_{2}\lambda_{1} + \lambda_{1}\mu_{4}\mu_{5} + \lambda_{2}\lambda_{4}\mu_{5} + \lambda_{3}\mu_{4}\lambda_{5} + \lambda_{4}\lambda_{4}\lambda_{5} \\
f_{100} &= - \mu_{1}\mu_{4} - \mu_{2}\lambda_{4} + \lambda_{1}\mu_{1}\mu_{1} + \lambda_{2}\lambda_{1}\mu_{1} + \lambda_{3}\mu_{1}\lambda_{1} + \lambda_{4}\lambda_{1}\lambda_{1} + \lambda_{1}\mu_{3}\mu_{5} + \lambda_{2}\lambda_{3}\mu_{5} + \lambda_{3}\mu_{3}\lambda_{5} + \lambda_{4}\lambda_{3}\lambda_{5} \\
f_{101} &= - \mu_{1}\mu_{5} - \mu_{2}\lambda_{5} + \lambda_{5}\mu_{2}\mu_{2} + \lambda_{6}\lambda_{2}\mu_{2} + \lambda_{7}\mu_{2}\lambda_{2} + \lambda_{8}\lambda_{2}\lambda_{2} + \lambda_{5}\mu_{4}\mu_{6} + \lambda_{6}\lambda_{4}\mu_{6} + \lambda_{7}\mu_{4}\lambda_{6} + \lambda_{8}\lambda_{4}\lambda_{6} \\
f_{102} &= - \mu_{1}\mu_{6} - \mu_{2}\lambda_{6} + \lambda_{5}\mu_{1}\mu_{2} + \lambda_{6}\lambda_{1}\mu_{2} + \lambda_{7}\mu_{1}\lambda_{2} + \lambda_{8}\lambda_{1}\lambda_{2} + \lambda_{5}\mu_{3}\mu_{6} + \lambda_{6}\lambda_{3}\mu_{6} + \lambda_{7}\mu_{3}\lambda_{6} + \lambda_{8}\lambda_{3}\lambda_{6} \\
f_{103} &= - \mu_{1}\mu_{7} - \mu_{2}\lambda_{7} + \lambda_{5}\mu_{2}\mu_{1} + \lambda_{6}\lambda_{2}\mu_{1} + \lambda_{7}\mu_{2}\lambda_{1} + \lambda_{8}\lambda_{2}\lambda_{1} + \lambda_{5}\mu_{4}\mu_{5} + \lambda_{6}\lambda_{4}\mu_{5} + \lambda_{7}\mu_{4}\lambda_{5} + \lambda_{8}\lambda_{4}\lambda_{5} \\
f_{104} &= - \mu_{1}\mu_{8} - \mu_{2}\lambda_{8} + \lambda_{5}\mu_{1}\mu_{1} + \lambda_{6}\lambda_{1}\mu_{1} + \lambda_{7}\mu_{1}\lambda_{1} + \lambda_{8}\lambda_{1}\lambda_{1} + \lambda_{5}\mu_{3}\mu_{5} + \lambda_{6}\lambda_{3}\mu_{5} + \lambda_{7}\mu_{3}\lambda_{5} + \lambda_{8}\lambda_{3}\lambda_{5} \\
f_{105} &= - \mu_{3}\mu_{1} - \mu_{4}\lambda_{1} + \lambda_{1}\mu_{2}\mu_{4} + \lambda_{2}\lambda_{2}\mu_{4} + \lambda_{3}\mu_{2}\lambda_{4} + \lambda_{4}\lambda_{2}\lambda_{4} + \lambda_{1}\mu_{4}\mu_{8} + \lambda_{2}\lambda_{4}\mu_{8} + \lambda_{3}\mu_{4}\lambda_{8} + \lambda_{4}\lambda_{4}\lambda_{8} \\
f_{106} &= - \mu_{3}\mu_{2} - \mu_{4}\lambda_{2} + \lambda_{1}\mu_{1}\mu_{4} + \lambda_{2}\lambda_{1}\mu_{4} + \lambda_{3}\mu_{1}\lambda_{4} + \lambda_{4}\lambda_{1}\lambda_{4} + \lambda_{1}\mu_{3}\mu_{8} + \lambda_{2}\lambda_{3}\mu_{8} + \lambda_{3}\mu_{3}\lambda_{8} + \lambda_{4}\lambda_{3}\lambda_{8} \\
f_{107} &= - \mu_{3}\mu_{3} - \mu_{4}\lambda_{3} + \lambda_{1}\mu_{2}\mu_{3} + \lambda_{2}\lambda_{2}\mu_{3} + \lambda_{3}\mu_{2}\lambda_{3} + \lambda_{4}\lambda_{2}\lambda_{3} + \lambda_{1}\mu_{4}\mu_{7} + \lambda_{2}\lambda_{4}\mu_{7} + \lambda_{3}\mu_{4}\lambda_{7} + \lambda_{4}\lambda_{4}\lambda_{7} \\
f_{108} &= - \mu_{3}\mu_{4} - \mu_{4}\lambda_{4} + \lambda_{1}\mu_{1}\mu_{3} + \lambda_{2}\lambda_{1}\mu_{3} + \lambda_{3}\mu_{1}\lambda_{3} + \lambda_{4}\lambda_{1}\lambda_{3} + \lambda_{1}\mu_{3}\mu_{7} + \lambda_{2}\lambda_{3}\mu_{7} + \lambda_{3}\mu_{3}\lambda_{7} + \lambda_{4}\lambda_{3}\lambda_{7} \\
f_{109} &= - \mu_{3}\mu_{5} - \mu_{4}\lambda_{5} + \lambda_{5}\mu_{2}\mu_{4} + \lambda_{6}\lambda_{2}\mu_{4} + \lambda_{7}\mu_{2}\lambda_{4} + \lambda_{8}\lambda_{2}\lambda_{4} + \lambda_{5}\mu_{4}\mu_{8} + \lambda_{6}\lambda_{4}\mu_{8} + \lambda_{7}\mu_{4}\lambda_{8} + \lambda_{8}\lambda_{4}\lambda_{8} \\
f_{110} &= - \mu_{3}\mu_{6} - \mu_{4}\lambda_{6} + \lambda_{5}\mu_{1}\mu_{4} + \lambda_{6}\lambda_{1}\mu_{4} + \lambda_{7}\mu_{1}\lambda_{4} + \lambda_{8}\lambda_{1}\lambda_{4} + \lambda_{5}\mu_{3}\mu_{8} + \lambda_{6}\lambda_{3}\mu_{8} + \lambda_{7}\mu_{3}\lambda_{8} + \lambda_{8}\lambda_{3}\lambda_{8} \\
f_{111} &= - \mu_{3}\mu_{7} - \mu_{4}\lambda_{7} + \lambda_{5}\mu_{2}\mu_{3} + \lambda_{6}\lambda_{2}\mu_{3} + \lambda_{7}\mu_{2}\lambda_{3} + \lambda_{8}\lambda_{2}\lambda_{3} + \lambda_{5}\mu_{4}\mu_{7} + \lambda_{6}\lambda_{4}\mu_{7} + \lambda_{7}\mu_{4}\lambda_{7} + \lambda_{8}\lambda_{4}\lambda_{7} \\
f_{112} &= - \mu_{3}\mu_{8} - \mu_{4}\lambda_{8} + \lambda_{5}\mu_{1}\mu_{3} + \lambda_{6}\lambda_{1}\mu_{3} + \lambda_{7}\mu_{1}\lambda_{3} + \lambda_{8}\lambda_{1}\lambda_{3} + \lambda_{5}\mu_{3}\mu_{7} + \lambda_{6}\lambda_{3}\mu_{7} + \lambda_{7}\mu_{3}\lambda_{7} + \lambda_{8}\lambda_{3}\lambda_{7} \\
f_{113} &= - \mu_{5}\mu_{1} - \mu_{6}\lambda_{1} + \lambda_{1}\mu_{6}\mu_{2} + \lambda_{2}\lambda_{6}\mu_{2} + \lambda_{3}\mu_{6}\lambda_{2} + \lambda_{4}\lambda_{6}\lambda_{2} + \lambda_{1}\mu_{8}\mu_{6} + \lambda_{2}\lambda_{8}\mu_{6} + \lambda_{3}\mu_{8}\lambda_{6} + \lambda_{4}\lambda_{8}\lambda_{6} \\
f_{114} &= - \mu_{5}\mu_{2} - \mu_{6}\lambda_{2} + \lambda_{1}\mu_{5}\mu_{2} + \lambda_{2}\lambda_{5}\mu_{2} + \lambda_{3}\mu_{5}\lambda_{2} + \lambda_{4}\lambda_{5}\lambda_{2} + \lambda_{1}\mu_{7}\mu_{6} + \lambda_{2}\lambda_{7}\mu_{6} + \lambda_{3}\mu_{7}\lambda_{6} + \lambda_{4}\lambda_{7}\lambda_{6} \\
f_{115} &= - \mu_{5}\mu_{3} - \mu_{6}\lambda_{3} + \lambda_{1}\mu_{6}\mu_{1} + \lambda_{2}\lambda_{6}\mu_{1} + \lambda_{3}\mu_{6}\lambda_{1} + \lambda_{4}\lambda_{6}\lambda_{1} + \lambda_{1}\mu_{8}\mu_{5} + \lambda_{2}\lambda_{8}\mu_{5} + \lambda_{3}\mu_{8}\lambda_{5} + \lambda_{4}\lambda_{8}\lambda_{5} \\
f_{116} &= - \mu_{5}\mu_{4} - \mu_{6}\lambda_{4} + \lambda_{1}\mu_{5}\mu_{1} + \lambda_{2}\lambda_{5}\mu_{1} + \lambda_{3}\mu_{5}\lambda_{1} + \lambda_{4}\lambda_{5}\lambda_{1} + \lambda_{1}\mu_{7}\mu_{5} + \lambda_{2}\lambda_{7}\mu_{5} + \lambda_{3}\mu_{7}\lambda_{5} + \lambda_{4}\lambda_{7}\lambda_{5} \\
f_{117} &= - \mu_{5}\mu_{5} - \mu_{6}\lambda_{5} + \lambda_{5}\mu_{6}\mu_{2} + \lambda_{6}\lambda_{6}\mu_{2} + \lambda_{7}\mu_{6}\lambda_{2} + \lambda_{8}\lambda_{6}\lambda_{2} + \lambda_{5}\mu_{8}\mu_{6} + \lambda_{6}\lambda_{8}\mu_{6} + \lambda_{7}\mu_{8}\lambda_{6} + \lambda_{8}\lambda_{8}\lambda_{6} \\
f_{118} &= - \mu_{5}\mu_{6} - \mu_{6}\lambda_{6} + \lambda_{5}\mu_{5}\mu_{2} + \lambda_{6}\lambda_{5}\mu_{2} + \lambda_{7}\mu_{5}\lambda_{2} + \lambda_{8}\lambda_{5}\lambda_{2} + \lambda_{5}\mu_{7}\mu_{6} + \lambda_{6}\lambda_{7}\mu_{6} + \lambda_{7}\mu_{7}\lambda_{6} + \lambda_{8}\lambda_{7}\lambda_{6} \\
f_{119} &= - \mu_{5}\mu_{7} - \mu_{6}\lambda_{7} + \lambda_{5}\mu_{6}\mu_{1} + \lambda_{6}\lambda_{6}\mu_{1} + \lambda_{7}\mu_{6}\lambda_{1} + \lambda_{8}\lambda_{6}\lambda_{1} + \lambda_{5}\mu_{8}\mu_{5} + \lambda_{6}\lambda_{8}\mu_{5} + \lambda_{7}\mu_{8}\lambda_{5} + \lambda_{8}\lambda_{8}\lambda_{5} \\
f_{120} &= - \mu_{5}\mu_{8} - \mu_{6}\lambda_{8} + \lambda_{5}\mu_{5}\mu_{1} + \lambda_{6}\lambda_{5}\mu_{1} + \lambda_{7}\mu_{5}\lambda_{1} + \lambda_{8}\lambda_{5}\lambda_{1} + \lambda_{5}\mu_{7}\mu_{5} + \lambda_{6}\lambda_{7}\mu_{5} + \lambda_{7}\mu_{7}\lambda_{5} + \lambda_{8}\lambda_{7}\lambda_{5} \\
f_{121} &= - \mu_{7}\mu_{1} - \mu_{8}\lambda_{1} + \lambda_{1}\mu_{6}\mu_{4} + \lambda_{2}\lambda_{6}\mu_{4} + \lambda_{3}\mu_{6}\lambda_{4} + \lambda_{4}\lambda_{6}\lambda_{4} + \lambda_{1}\mu_{8}\mu_{8} + \lambda_{2}\lambda_{8}\mu_{8} + \lambda_{3}\mu_{8}\lambda_{8} + \lambda_{4}\lambda_{8}\lambda_{8} \\
f_{122} &= - \mu_{7}\mu_{2} - \mu_{8}\lambda_{2} + \lambda_{1}\mu_{5}\mu_{4} + \lambda_{2}\lambda_{5}\mu_{4} + \lambda_{3}\mu_{5}\lambda_{4} + \lambda_{4}\lambda_{5}\lambda_{4} + \lambda_{1}\mu_{7}\mu_{8} + \lambda_{2}\lambda_{7}\mu_{8} + \lambda_{3}\mu_{7}\lambda_{8} + \lambda_{4}\lambda_{7}\lambda_{8} \\
f_{123} &= - \mu_{7}\mu_{3} - \mu_{8}\lambda_{3} + \lambda_{1}\mu_{6}\mu_{3} + \lambda_{2}\lambda_{6}\mu_{3} + \lambda_{3}\mu_{6}\lambda_{3} + \lambda_{4}\lambda_{6}\lambda_{3} + \lambda_{1}\mu_{8}\mu_{7} + \lambda_{2}\lambda_{8}\mu_{7} + \lambda_{3}\mu_{8}\lambda_{7} + \lambda_{4}\lambda_{8}\lambda_{7} \\
f_{124} &= - \mu_{7}\mu_{4} - \mu_{8}\lambda_{4} + \lambda_{1}\mu_{5}\mu_{3} + \lambda_{2}\lambda_{5}\mu_{3} + \lambda_{3}\mu_{5}\lambda_{3} + \lambda_{4}\lambda_{5}\lambda_{3} + \lambda_{1}\mu_{7}\mu_{7} + \lambda_{2}\lambda_{7}\mu_{7} + \lambda_{3}\mu_{7}\lambda_{7} + \lambda_{4}\lambda_{7}\lambda_{7} \\
f_{125} &= - \mu_{7}\mu_{5} - \mu_{8}\lambda_{5} + \lambda_{5}\mu_{6}\mu_{4} + \lambda_{6}\lambda_{6}\mu_{4} + \lambda_{7}\mu_{6}\lambda_{4} + \lambda_{8}\lambda_{6}\lambda_{4} + \lambda_{5}\mu_{8}\mu_{8} + \lambda_{6}\lambda_{8}\mu_{8} + \lambda_{7}\mu_{8}\lambda_{8} + \lambda_{8}\lambda_{8}\lambda_{8} \\
f_{126} &= - \mu_{7}\mu_{6} - \mu_{8}\lambda_{6} + \lambda_{5}\mu_{5}\mu_{4} + \lambda_{6}\lambda_{5}\mu_{4} + \lambda_{7}\mu_{5}\lambda_{4} + \lambda_{8}\lambda_{5}\lambda_{4} + \lambda_{5}\mu_{7}\mu_{8} + \lambda_{6}\lambda_{7}\mu_{8} + \lambda_{7}\mu_{7}\lambda_{8} + \lambda_{8}\lambda_{7}\lambda_{8} \\
f_{127} &= - \mu_{7}\mu_{7} - \mu_{8}\lambda_{7} + \lambda_{5}\mu_{6}\mu_{3} + \lambda_{6}\lambda_{6}\mu_{3} + \lambda_{7}\mu_{6}\lambda_{3} + \lambda_{8}\lambda_{6}\lambda_{3} + \lambda_{5}\mu_{8}\mu_{7} + \lambda_{6}\lambda_{8}\mu_{7} + \lambda_{7}\mu_{8}\lambda_{7} + \lambda_{8}\lambda_{8}\lambda_{7} \\
f_{128} &= - \mu_{7}\mu_{8} - \mu_{8}\lambda_{8} + \lambda_{5}\mu_{5}\mu_{3} + \lambda_{6}\lambda_{5}\mu_{3} + \lambda_{7}\mu_{5}\lambda_{3} + \lambda_{8}\lambda_{5}\lambda_{3} + \lambda_{5}\mu_{7}\mu_{7} + \lambda_{6}\lambda_{7}\mu_{7} + \lambda_{7}\mu_{7}\lambda_{7} + \lambda_{8}\lambda_{7}\lambda_{7}
\end{align*}

\normalsize

\end{document}